\newtheorem{thm}{Theorem}[section]
\newtheorem{cor}[thm]{Corollary}
\newtheorem{lem}[thm]{Lemma}
\newtheorem{prop}[thm]{Proposition}
\newtheorem*{thm1}{Theorem \ref{trace monoids aut sgps}}
\newtheorem*{thm2}{Theorem \ref{post corr thm}}
\newtheorem*{prop1}{Proposition}
\theoremstyle{remark}
\theoremstyle{definition}
\newtheorem{mydef}[thm]{Definition}
\newtheorem{ex}[thm]{Example}
\begin{document}
\title{Semigroups Arising from Asynchronous Automata}
\author{David McCune}
\address{Department of
Mathematics\\
University of Nebraska-Lincoln\\Lincoln,  NE 68588-0130}

\email{s-dmccune1@math.unl.edu}
\date{November 2010}
\begin{abstract}
We introduce a new class of semigroups arising from a restricted
class of asynchronous automata. We call these semigroups ``expanding
automaton semigroups." We show that the class of synchronous
automaton semigroups is strictly contained in the class of expanding
automaton semigroups, and that the class of expanding automaton
semigroups is strictly contained in the class of asynchronous
automaton semigroups.  We investigate the dynamics of expanding
automaton semigroups acting on regular rooted trees, and show that
undecidability arises in these actions.  We show that this class is
not closed under taking normal ideal extensions, but the class of
asynchronous automaton semigroups is closed under taking these
extensions.  We construct every free partially commutative monoid as
a synchronous automaton semigroup.
\end{abstract}

\bibliographystyle{amsplain}

\numberwithin{thm}{section}

\maketitle

\section{INTRODUCTION}

 Automaton groups were introduced in the 1980's as
 examples of groups with fascinating properties.  For example, Grigorchuk's
 group is the first known group of intermediate growth and is also
 an infinite periodic group.  Besides having interesting properties,
 many of these groups have deep connections with dynamical systems
 which were explored by Bartholdi and Nekrashevych in \cite{BN} and \cite{N}.  In
 particular, they use these groups to solve a
 longstanding problem in holomorphic dynamics (see \cite{BN}).  For a general
 introduction to these groups, see \cite{GS} by Grigorchuk and Sunic or \cite{N} by Nekrashevych.

 Many generalizations of automaton groups have been
 studied.  The most famous and well-studied generalization is the
 class self-similar groups.  A good introduction to these groups can
 be found in \cite{GS} or \cite{N}.
 More recently, Slupik and Sushchansky study semigroups arising from partial invertible synchronous
 automata in \cite{SlSu}.  Cain, Reznikov, Silva, Sushchanskii, and Steinberg investigate automaton
semigroups, which are semigroups that arise from (not necessarily
invertible) synchronous automata in \cite{C}, \cite{RezS}, and
\cite{SS}.  Grigorchuk, Nekrashevich, and Sushchanskii study groups
arising from asynchronous automata in \cite{GVS}.

 In all of the references listed above except for \cite{GVS}, the semigroups
 studied arose from synchronous automata.  In \cite{GVS}, Grigorchuk et al. study groups arising from asynchronous
 automata.  In particular, they give examples of automata generating
 Thompson groups and groups of shift automorphisms.  This paper studies a class of semigroups that we call ``expanding automaton
 semigroups."  These semigroups arise from a restricted class of asynchronous automata
 that we call ``expanding automata," and the class of expanding automata contains the class of
 synchronous automata.  Thus the class of automaton semigroups is contained in the class of expanding automaton semigroups, and
 the class of expanding automaton semigroups is contained in the class of asynchronous automaton semigroups.  As mentioned above, automaton semigroups and
 asynchronous automaton semigroups have been studied, but thus far a
 study of expanding automaton semigroups has not been done.

 In Section 2 we give definitions of the different kinds of automata, and
 explain how the states of a given automaton act on a regular
 rooted tree.  In particular, let $\Sigma$ be a finite set, and let $\Sigma^*$ denote the free monoid generated by
 $\Sigma$.  Then the states of a given automaton act on $\Sigma^*$
 for some finite set $\Sigma$.  Thus we can consider the semigroup generated by
 the states of an automaton as a semigroup of functions from $\Sigma^*$
 to $\Sigma^*$.  Given a free monoid $\Sigma^*$, we
 associate a regular rooted tree $\mathcal{T}(\Sigma^*)$ with $\Sigma^*$ by letting
 the vertices of $\mathcal{T}(\Sigma^*)$ be $\Sigma^*$ and
 letting the edge set be $(w,w\sigma)$ for all $w \in \Sigma^*$ and
 $\sigma \in \Sigma$.  The identity of $\Sigma^*$ is the root of the
 tree.
 The action of a semigroup associated with an asynchronous automaton
 on $\Sigma^*$ induces an action on
 the tree $\mathcal{T}(\Sigma^*)$.
 Let $\Sigma^{\omega}$ denote the set
 of right-infinite words over $\Sigma$.  Then $\Sigma^{\omega}$ is
 the boundary of the tree $\mathcal{T}(\Sigma^*)$.  The action of an
 asynchronous automaton semigroup on $\Sigma^*$ induces an action of the semigroup on
 $\Sigma^{\omega}$, and so an asynchronous automaton semigroup acts
 on the boundary of a regular rooted tree.

 Section 2 also contains examples of expanding automaton
 semigroups that are not automaton semigroups (Proposition \ref{not aut sgps}), as well as
 asynchronous automaton semigroups that are not expanding automaton
 semigroups (Proposition \ref{bicyclic submonoid}).  Thus Propositions \ref{not aut sgps} and \ref{bicyclic submonoid}
 combine to show the following.
\begin{prop1}
 The class of automaton semigroups is strictly
 contained in the class of expanding automaton semigroups, and the
 class of expanding automaton semigroups is strictly contained in the class
 of asynchronous automaton semigroups.
\end{prop1}
 We show the latter by
 proving that the bicyclic monoid (the monoid with monoid
 presentation $<a,b \ | \ ab=1>$) is not a submonoid of any expanding
 automaton semigroup (Proposition \ref{left invertible iff right invertible}), and then we demonstrate an asynchronous
 automaton semigroup that contains the bicyclic monoid as a
 submonoid (Proposition \ref{bicyclic submonoid}).

 In Section 3 we investigate the dynamics of expanding automaton
 semigroups and asynchronous automaton semigroups on the trees on
 which they act.  Example \ref{Thue-Morse} gives an example of an
 expanding automaton semigroup $S$ acting on $\{0,1\}^*$ such that
 there are infinite words $\omega_1,\omega_2 \in \{0,1\}^{\omega}$ with
 $s(\omega_1)=\omega_1$ and $s(\omega_2)=(\omega_2)$ for all $s \in
 S$. Furthermore, if $\omega \in \{0,1\}^{\omega}$ is not equal to
 $\omega_1$ or $\omega_2$, then $s(\omega)\neq \omega$ for all $s
 \in S$.  Proposition \ref{aut sgp fixed points on bdy} shows that
 automaton semigroups cannot have this kind of dynamical behavior
 when acting on the boundary of a tree.  Thus the boundary dynamics of expanding
 automaton semigroups is richer than the boundary dynamics of
 automaton semigroups.

 Section 3 also investigates several algorithmic problems regarding
 the actions of expanding automaton semigroups on a tree.
 Proposition \ref{word problem solvable} gives an algorithm that
 solves the uniform word problem for expanding automaton semigroups.
  This result is already known, as Grigorchuk et al. show in
Theorem 2.15 of \cite{GVS} that the uniform word problem is solvable
for asynchronous automaton semigroups.  We give an algorithm with
our terminology for completeness.  Proposition \ref{decidability of
injectivity} gives an algorithm which decides whether a state of an
automaton over $\Sigma$ induces an injective function from
$\mathcal{T}(\Sigma^*)$ to $\mathcal{T}(\Sigma^*)$.

Since the uniform word problem is decidable for these semigroups,
there is an algorithm that takes as input an expanding automaton
over an alphabet $\Sigma$ and states $q_1,q_2$ of the automaton and
decides whether $q_1(w)=q_2(w)$ for all $w \in \Sigma^*$. On the
other hand, Theorem \ref{post corr thm} shows the following.
\begin{thm2}
\begin{enumerate}

\item There is no algorithm which takes as input an expanding
automaton
\newline $\mathcal{A}=(Q,\Sigma,t,o)$ and states $q_1,q_2 \in Q$ and
decides whether or not there is a word $w \in \Sigma^*$ with
$q_1(w)=q_2(w)$.
\item There is no algorithm which takes as input an expanding automaton
\newline $\mathcal{A}=(Q,\Sigma,t,o)$ and states $q_1,q_2 \in Q$ and decides
whether or not there is an infinite word $\omega \in
\Sigma^{\omega}$ such that $q_1(\omega)=q_2(\omega)$.
\end{enumerate}
\end{thm2}

The problem in part 1 of the above theorem is decidable for
automaton semigroups: if $\mathcal{A}=(Q,\Sigma,t,o)$ is a
synchronous automaton with $q_1,q_2 \in Q$, then (because $q_1$ and
$q_2$ induce level producing functions $\Sigma^* \rightarrow
\Sigma^*$) there is a word $w \in \Sigma^*$ such that
$q_1(w)=q_2(w)$ if and only if there is a letter $\sigma \in \Sigma$
such that $q_1(\sigma)=q_2(\sigma)$.

We close Section 3 by applying Theorem \ref{post corr thm} to study
the dynamics of asynchronous automaton semigroups.  Theorem
\ref{prefix code post corr thm} shows that there is no algorithm
which takes as input an asynchronous automaton over an alphabet
$\Sigma$, a subset $X \subseteq \Sigma$, and a state $q$ of the
automaton and decides whether there is a word $w \in X^*$ such that
$q(w)=w$.  Thus we cannot decide if $q$ has a fixed point in $X^*$.
  Furthermore, Theorem \ref{prefix code post corr thm} also shows that there is no algorithm which takes as
input an asynchronous automaton over an alphabet $\Sigma$, a subset
$X \subseteq \Sigma$, and a state $q$ of the automaton and decides
whether there is an infinite word $\omega \in X^{\omega}$ such that
$q(\omega)=\omega$.  Thus undecidability arises when trying to
understand the fixed points sets of asynchronous automaton
semigroups on the boundary of a tree.

In Section 4 we give the basic algebraic theory of expanding
automaton semigroups.  Recall that a semigroup $S$ is
\emph{residually finite} if for all $s_1,s_2 \in S$ with $s_1 \neq
s_2$ then there is a finite semigroup $S'$ and a homomorphism
$\phi:S \rightarrow S'$ such that $\phi(s_1) \neq \phi(s_2)$.
Proposition \ref{EAS res finite} shows that expanding automaton
semigroups are residually finite.  It is already known that
automaton groups are residually finite (see Proposition 2.2 of
\cite{GS}) and automaton semigroups are residually finite (see
Proposition 3.2 of \cite{C}). Asynchronous automaton semigroups are
not residually finite, as there is an asynchronous automaton
generating Thompson's group $F$ (see section 5.2 of \cite{GVS}).
This group is an infinite simple group, and so is not residually
finite.  Thus residual finiteness of expanding automaton semigroups
also distinguishes this class from the class of asynchronous
automaton semigroups.  Recall that if $S$ is a semigroup, an element
$s \in S$ is said to be \emph{periodic} if there are $m,n \in
\mathbb{N}$ such that $a^m=a^n$.  Proposition \ref{restriction on
periodicity} shows that the periodicity structure of expanding
automaton semigroups is restricted.  In particular, let $P_{\Sigma}$
denote the set of prime numbers that divide $|\Sigma|!$.  If $S$ is
an expanding automaton semigroup and $s \in S$ is such that
$s^m=s^n$ for some $m,n \in \mathbb{N}$ with $n>m$, then the prime
factorization of $n-m$ contains only primes from $P_{\Sigma}$.

In Section 4.2 we provide information about subgroups of expanding
automaton semigroups. Proposition \ref{EAS gp iff aut gp} shows that
an expanding automaton semigroup $S$ is a group if and only if $S$
is an automaton group.  Proposition 3.1 of \cite{C} shows that an
automaton semigroup $S$ is a group if and only if $S$ is an
automaton group; we use the idea of the proof of this proposition to
obtain our result. Note that such a proposition does not apply to
asynchronous automaton semigroups, as Thompson's group $F$ can be
realized with an asynchronous automaton. Proposition \ref{subgroups
of EAS's} shows that if $H$ is a subgroup of an expanding automaton
semigroup, then there is a self-similar group $G$ such that $H$ is a
subgroup of $G$. Proposition \ref{unique maximal subgroup} shows
that if an expanding automaton semigroup $S$ has a unique maximal
subgroup $H$, then $H$ is self-similar. In particular, this
proposition implies that if $S$ is the semigroup generated by the
states of an invertible synchronous automaton, then the group of
units of $S$ is self-similar (Corollary \ref{cor about inv syn
aut}).

In Section 5.1 we study closure properties of expanding automaton
semigroups. Let $S$ and $T$ be semigroups.  The \emph{normal ideal
extension} of $S$ by $T$ is the disjoint union of $S$ and $T$ with
multiplication defined by $x \cdot y=xy$ if $x,y \in S$ or $x,y \in
T$, $x \cdot y=y$ if $x \in S$ and $y \in T$, and $x \cdot y=x$ if
$x \in T$ and $y \in S$.  Note that if $S$ is a semigroup, then
adjoining a zero to $S$ is an example of a normal ideal extension.
Proposition 5.6 of \cite{C} shows that the class of automaton
semigroups is closed under normal ideal extensions.  We show in
Proposition \ref{AAS normal ideal extension} that the class of
asynchronous automaton semigroups is closed under normal ideal
extensions.  On the other hand, we show in Proposition \ref{N with
zero adjoined not EAS} that the free semigroup of rank 1 with a zero
adjoined is not an expanding automaton semigroup. Example
\ref{Thue-Morse} shows that the free semigroup of rank 1 is an
expanding automaton semigroup, and so we have that the class of
expanding automaton semigroups is not closed under normal ideal
extensions.  Lastly, we show that the class of expanding automaton
semigroups is closed under direct product (provided the direct
product is finitely generated).  In Proposition 5.5 of \cite{C} Cain
shows the same result for automaton semigroups, and our proof is
similar.

Section 5.2 contains further constructions of expanding automaton
semigroups.  A \emph{free partially commutative monoid} is a monoid
generated by a set $X=\{x_1,...,x_n\}$ with relation set $R$ such
that $R \subseteq \{(x_ix_j,x_jx_i) \ | \ 1 \leq i,j \leq n\}$, i.e.
a monoid in which the only relations are commuting relations between
generators.  We show the following.
\begin{thm1}
Every free partially commutative monoid is an automaton semigroup.
\end{thm1}

\section{DEFINITIONS AND EXAMPLES}

 Given a set $X$, let $X^+$ denote the free semigroup generated by
 $X$.  In the free monoid $X^*$, let $\emptyset$ denote the
 identity. As defined in \cite{GVS}, an {\it asynchronous automaton} is
 a quadruple $(Q,\Sigma,t,o)$ where
 $Q$ is a finite set of states, $\Sigma$ is a finite alphabet of
 symbols, $t:Q \times \Sigma \rightarrow Q$ is a transition
 function, and $o:Q \times \Sigma \rightarrow \Sigma^*$ is an output function.  A {\it synchronous
 automaton} is defined analogously, the difference being that $o:Q \times \Sigma \rightarrow
 \Sigma$ (the range of the output function is $\Sigma$ rather than $\Sigma^*$).  In
 this paper, we study a restricted class of asynchronous automata.

 An \emph{expanding automaton} is a quadruple $\mathcal{A}=(Q,\Sigma,t,o)$ where
 $Q$ is a finite set of states, $\Sigma$ is a finite alphabet of
 symbols, $t:Q \times \Sigma \rightarrow Q$ is a transition
 function, and $o:Q \times \Sigma \rightarrow \Sigma^+$ is an output
 function.  We view an expanding automaton $\mathcal{A}$ as a directed labeled graph
 with vertex set $Q$ and an edge from $q_1$ to $q_2$ labeled by
 $\sigma|w$ if and only if $t(q_1,\sigma)=q_2$ and $o(q_1,\sigma)=w$. Given an edge
 $\sigma|w$ in the graph, we refer to $\sigma$ as the \emph{input} of the edge,
 and $w$ as the \emph{output} of the edge.  The interpretation of
 this graph is that if the automaton $A$ is in state $q_1$ and reads
 symbol $\sigma$, then it changes to state $q_2$ and outputs the word
 $w$.  Thus, if we fix $q_0 \in Q$, the automaton can read a sequence
 of symbols $\sigma_1\sigma_2...\sigma_n$ and output a sequence
 $w_1w_2...w_n$ where $t(q_{i-1},\sigma_i)=q_i$ and
 $o(q_{i-1},\sigma_i)=w_i$ for $i=1,...,n$.

 Each state $q \in Q$ induces a function $\Sigma^* \rightarrow
 \Sigma^*$ in the following way: $q$ acting on $\beta$, denoted
 $q(\beta)$, is defined to be the sequence that the automaton
 outputs when the automaton starts in state $q$ and reads the sequence $\beta$.
  We also insist that $q(\emptyset)=\emptyset$.  This action of $q$ on $\Sigma^*$
  induces an action of
  $q$ on $\mathcal{T}(\Sigma^*)$.  The state $q$ induces a function
  $f_q: \mathcal{T}(\Sigma^*) \rightarrow
  \mathcal{T}(\Sigma^*)$ by $f_q(w)=q(w)$ if $w \in \Sigma^*$, and
  if $e$ is an edge in $\mathcal{T}(\Sigma^*)$ with endpoints $w$
  and $w\sigma$ then $f_q(e)=e_1e_2...e_n$ where $e_1...e_n$ is the
  unique geodesic sequence of edges in $\mathcal{T}(\Sigma^*)$ connecting $q(w)$ and
  $q(w\sigma)$.  By abuse of notation, we identify $f_q$ with $q$, as
  context should eliminate confusion.
  Considering the states of an automaton as functions leads to the
  following definition:

\begin{mydef} Given an expanding automaton $\mathcal{A}$, we say that the \emph{expanding
automaton semigroup} (respectively monoid) corresponding to
$\mathcal{A}$, denoted $S(\mathcal{A})$, is the semigroup
(respectively monoid) generated by the states of
$\mathcal{A}$.\end{mydef}

An \emph{invertible synchronous automaton} (or \emph{invertible
automaton}) is a quadruple $\mathcal{A}=(Q,\Sigma,t,o)$ where $o:Q
\times \Sigma \rightarrow \Sigma$ and, for any $q \in Q$, the
restricted function $o_q:\{q\} \times \Sigma \rightarrow \Sigma$ is
a permutation of $\Sigma$. The states of an invertible automaton
$(Q,\Sigma,t,o)$ induce bijections on $\mathcal{T}(\Sigma^*)$.
Furthermore, these functions are \emph{level-preserving}, i.e.
$|w|=|q(w)|$ for all $w \in \Sigma^*$ and $q \in Q$ (where $|\cdot|$
is the length function on $\Sigma^*$).  Thus, given an invertible
automaton $\mathcal{A}=(Q,\Sigma,t,o)$, we define the
\emph{automaton group} associated with $\mathcal{A}$ to be the group
generated by the states of $\mathcal{A}$. An \emph{automaton
semigroup} is a semigroup generated by the states of a synchronous
automaton. Thus the generators of an automaton semigroup over the
alphabet $\Sigma$ induce level-preserving functions on
$\mathcal{T}(\Sigma^*)$, but these functions are not necessarily
bijective.  Finally, an \emph{asynchronous automaton semigroup} is a
semigroup generated by the states of an asynchronous automaton.

A \emph{self-similar group} is a group generated by the states of an
invertible synchronous automaton with possibly infinitely states.  A
\emph{self-similar semigroup} is defined analogously.  Thus we
define an \emph{expanding self-similar semigroup} to be a semigroup
generated by the states of an expanding automaton with possibly
infinitely many states.

Note that if $s \in S$ where $S$ is an expanding automaton semigroup
acting on $\mathcal{T}(\Sigma^*)$, then $s$ need not induce a
level-preserving function $\mathcal{T}(\Sigma^*) \rightarrow
\mathcal{T}(\Sigma^*)$.  Thus elements of expanding automaton
semigroups are not necessarily graph morphisms.  If
$\mathcal{A}=(Q,\Sigma,t,o)$ is an expanding automaton, then the
output function mapping into $\Sigma^+$ implies that $|w|\leq
|s(w)|$ for all $s \in S(\mathcal{A})$, $w \in \Sigma^*$.  We say
that a function $f:\mathcal{T}(\Sigma^*) \rightarrow
\mathcal{T}(\Sigma^*)$ is \emph{prefix-preserving} if $f(v)$ is a
prefix of $f(w)$ in $\Sigma^*$ whenever $v$ is a prefix of $w$ in
$\Sigma^*$. We call a function $f:\mathcal{T}(\Sigma^*) \rightarrow
\mathcal{T}(\Sigma^*)$ \emph{length-expanding} if $|w|\leq |f(w)|$
for all $w \in \Sigma^*$ and $f(\emptyset)=\emptyset$.  If we
topologize the tree $\mathcal{T}(\Sigma^*)$ by making each edge
isometric to $[0,1]$ and imposing the path metric, then an element
of an expanding automaton semigroup acting on
$\mathcal{T}(\Sigma^*)$ will induce a prefix-preserving,
length-expanding endomorphism of the tree.  We call
$f:\mathcal{T}(\Sigma^*) \rightarrow \mathcal{T}(\Sigma^*)$ an
\emph{expanding endomorphism} if $f$ is prefix-preserving and
length-expanding.

Let $f$ be an expanding endomorphsm of a tree
 $\mathcal{T}(\Sigma^*)$, where $\Sigma=\{\sigma_1,...,\sigma_n\}$.  Then $f$ induces a function $\Sigma \rightarrow
 \Sigma^+$; for the rest of the paper we denote this function
 by $\tau_f$.  Note that for any $w \in \Sigma^*$, the tree
 $w\mathcal{T}(\Sigma^*)$ is isomorphic (as a graph or a metric space) to $\mathcal{T}(\Sigma^*)$.  Now for each $\sigma \in \Sigma$, $f$ induces an
 expanding endomorphism $\sigma\mathcal{T}(\Sigma^*) \rightarrow
 f(\sigma)\mathcal{T}(\Sigma^*)$; for the rest of the paper we denote this
 induced endomorphism by $f_{\sigma}$.  For any $\sigma \in \Sigma$ and $w \in
 \Sigma^*$, $f_{\sigma}$ is characterized by the equation
 \[f(\sigma w)=\tau_f(\sigma)f_{\sigma}(w).\]
The function $f_{\sigma}$ is called the \emph{section of $f$ at
$\sigma$}. Inductively, given $w \in \Sigma^*$, there exists an
expanding endomorphism $f_w$ such that $f(wv)=f(w)f_w(v)$ for every
$v \in \Sigma^*$.  We call $f_w$ the \emph{section of $f$ at $w$}.
To completely describe an expanding automorphism $f$, we need only
know the induced function $\tau_f$ and the sections
$f_{\sigma_1},...,f_{\sigma_n}$. Thus, in keeping with the notation
for automaton groups and semigroups in \cite{C} and \cite{N}, any
expanding endomorphism $f$ can be written as
\[f=(f_{\sigma_1},...,f_{\sigma_n})\tau_f\]
where each $f_{\sigma_i}$ is the section of $f$ at $\sigma_i$.

We denote a function $\tau: \Sigma \rightarrow \Sigma^+$ by
$[a_1,...,a_n]$ where $\tau(\sigma_i)=a_i$.  If $f$ and $g$ are
expanding endomorphisms with $f=(f_1,...,f_n)[w_1,...,w_n]$ and
$g=(g_1,...,g_n)[v_1,...,v_n]$, then their composition (our
functions act on the left) is given by the formula

\begin{equation}\label{composition formula}f \circ
g=(f_{v_1}g_1,...,f_{v_n}g_n)[f(v_1),...,f(v_n)]. \end{equation}

Let $\mathcal{A}=(Q,\Sigma,t,o)$ be an asynchronous automaton and $q
\in Q$.  If $w \in \Sigma^*$, then $q_w$ is obtained by viewing the
word $w$ as a path in $\mathcal{A}$ starting at $q$.  The terminal
vertex of this path is the section of $q$ at $w$.  Thus any section
of a state of $\mathcal{A}$ is itself a state of $\mathcal{A}$.

Let $\mathcal{A}=(Q,\Sigma,t,o)$ be an expanding automaton, and let
$s \in Q^*$ be an element of $S(\mathcal{A})$.  Equation
(\ref{composition formula}) allows us to build an expanding
automaton $\mathcal{A}_s$ that contains $s$ as a state.  Write
$s=q_1...q_n$. Using the original automaton $\mathcal{A}$, compute
$\tau_s$.  If we iteratively use Equation (\ref{composition
formula}) and $\mathcal{A}$, we can compute the section of $s$ at
$\sigma$ for any $\sigma \in \Sigma$ in terms of the sections of the
$q_i$'s. Furthermore, a straightforward induction on word length in
$Q^*$ shows that if $t$ is a section of $s$, then the word length of
$t$ in $Q^*$ is less than or equal to the word length of $s$ in
$Q^*$. Thus we will compute an expanding automaton $\mathcal{A}_s$
whose set of states has cardinality less than the cardinality of the
set $\{w \in Q^* \ : \ |w| \leq |s| \}$.

Before giving examples, we mention that we use the word ``action"
when describing the functions arising from these semigroups on
regular rooted trees. In general, if one says that a monoid $M$ has
an action on a set, one assumes
 that the identity of the monoid fixes each element of the set.  In this case, however, we can have expanding automaton
 monoids (and indeed automaton monoids) in which the identity element of the monoid does not
 fix each vertex of the tree, so we do not include that assumption as part of the definition of
 ``action".  Consider Example \ref{first} below.

 \begin{ex}\label{first}  Consider the expanding automaton $\mathcal{A}$ over the alphabet $\{0,1\}$ given by
 $a=(a,a)[11,1], b=(a,a)[111,11]$.  See Figure 1 for the graphical representation of $\mathcal{A}$.  We claim that $a$ is an identity
 element of $S(\mathcal{A})$ even though $a$ does not fix every
 element of $\mathcal{T}(\{0,1\}^*)$.  To see this, first note that
the range of $a$ is $\{1\}^*$.  Since $a$ fixes this set, $a^2=a$.
 Now the range of $b$ is
 $\{1\}^*-\{1\}$ and $a$ fixes this set, so $ab=b$.  Now let $w \in
 \{0,1\}^*$, and let $w_0 \in \mathbb{N}$ denote the number of 0's
 that occur in $w$; define $w_1$ analogously.  Then
 $a(w)=1^{2w_0+w_1}$, and therefore $ba(w)=1^{2w_0+w_1+1}$.
 Let $w'$ be the word obtained from $w$ by deleting the first letter
 of $w$.  If 0 is the first letter of $w$, then
 \[b(w)=1111^{2(w')_0+(w')_1}=1^{2w_0+w_1+1}=ba(w).\]
 Similarly, if $w$ starts with a 1 we have $b(w)=ba(w)$.  Hence
 $ab=b=ba$, and $a$ is an identity element.  Thus the action of $S(\mathcal{A})$ on $\{0,1\}^*$ includes the action
 of a semigroup identity that is not the identity function on $\mathcal{T}(\Sigma^*)$.
\end{ex}

\begin{figure}
\centering\includegraphics[width=55mm]{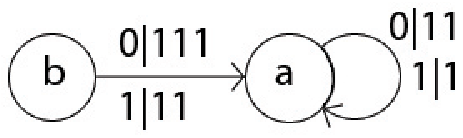}
\caption{Example \ref{first}}
\end{figure}

We now show that there are semigroups which are expanding automaton
semigroups but not automaton semigroups.

\begin{prop}\label{not aut sgps}  The class of automaton semigroups is strictly
contained in the class of expanding automaton semigroups.
\end{prop}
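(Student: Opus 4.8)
The plan is to prove this in two parts: (i) every synchronous automaton semigroup is an expanding automaton semigroup, which is nearly definitional since synchronous automata are the special case of expanding automata whose output function lands in $\Sigma \subseteq \Sigma^+$; and (ii) exhibit a concrete expanding automaton semigroup that is provably not a synchronous automaton semigroup. Part (i) requires only the observation that a synchronous automaton is literally an expanding automaton, so the inclusion of classes is immediate. The whole content is in part (ii), the strictness, and that is where I expect the main obstacle to lie.

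For part (ii), the natural candidate is the free monogenic semigroup $\mathbb{N}$ (the free semigroup of rank $1$), which the introduction promises is realized as an expanding automaton semigroup in Example \ref{Thue-Morse}. So I would first recall (or cite forward to) an explicit expanding automaton whose semigroup is $\mathbb{N}$; indeed Example \ref{first} already suggests the flavor, since a single non-identity expanding state acting by length-increase generates an infinite cyclic semigroup. The crux is then to argue that $\mathbb{N}$ cannot arise from any synchronous automaton. The key structural fact I would exploit is that generators of a synchronous automaton semigroup induce \emph{level-preserving} functions on $\mathcal{T}(\Sigma^*)$: for every state $q$ and every $w$, one has $|q(w)| = |w|$. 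Consequently every element $s$ of a synchronous automaton semigroup over alphabet $\Sigma$ is length-preserving, and its restriction to the finite set $\Sigma^n$ is a self-map of a set of size $|\Sigma|^n$.

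The argument I would run is a pigeonhole/finiteness argument. Fix a hypothetical synchronous automaton $\mathcal{A}=(Q,\Sigma,t,o)$ with $S(\mathcal{A}) \cong \mathbb{N}$, generated by a single element $s$ (if $\mathbb{N}$ is generated by the states, its unique generator $s$ acts level-preservingly). Restricting the action to the first level $\Sigma^1 = \Sigma$, the generator $s$ induces a function $\Sigma \to \Sigma$, i.e. an element of the finite transformation monoid $T_\Sigma$ on $|\Sigma|$ points. The powers $s, s^2, s^3, \dots$ therefore induce a sequence in $T_\Sigma$, which must eventually be periodic: there exist $m < n$ with $s^m$ and $s^n$ inducing the same function on $\Sigma$. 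But more is true — since $s$ is length-preserving and its action on each level $\Sigma^k$ lands in the finite monoid $T_{\Sigma^k}$, a uniform pigeonhole argument (or invoking residual finiteness of automaton semigroups, Proposition 3.2 of \cite{C}, together with the bounded level structure) forces $s^m = s^n$ as elements of $S(\mathcal{A})$ for some $m \neq n$. This contradicts $S(\mathcal{A}) \cong \mathbb{N}$, in which $s^m \neq s^n$ whenever $m \neq n$, since $\mathbb{N}$ is aperiodic and torsion-free.

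The step I expect to be the main obstacle is making the pigeonhole rigorous across all levels simultaneously: a priori $s^m$ and $s^n$ could agree on $\Sigma^k$ for each fixed $k$ only for level-dependent choices of $m,n$, so I must ensure a single pair $(m,n)$ works on all of $\Sigma^*$. The clean way around this is to use the section structure: an automaton semigroup element is determined by its finitely many sections, each of which is again a state-induced map drawn from a finite set, so the sequence of elements $(s^k)_{k\geq 1}$ lives in a finite set up to equality of sections, forcing $s^m = s^n$ for some $m<n$. Alternatively, one argues directly that a length-preserving self-map generating an infinite \emph{cyclic} semigroup is impossible because the induced transformation on level $1$ generates a finite cyclic subsemigroup of $T_\Sigma$, and one shows the whole of $s$ is controlled by finitely many such finite-level data. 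I would write the cleanest version, which is to invoke the finiteness of the section-determined description together with level-preservation to produce the periodic relation $s^m = s^n$, contradicting freeness of $\mathbb{N}$.
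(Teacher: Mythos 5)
Your high-level plan is legitimate and is in fact a route the paper itself acknowledges: the paper's own proof of Proposition \ref{not aut sgps} works with the two-generator semigroups $S_{m,n}=\langle a,b \mid b^m=b^n,\ ab=b\rangle$, whereas you propose the free semigroup of rank $1$, which the paper notes (via Proposition 4.3 of \cite{C} together with Example \ref{Thue-Morse}) gives another witness to strictness. Part (i) of your argument is indeed definitional. The problem is the execution of part (ii), specifically the step you yourself flag as the main obstacle.

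The claim that ``the sequence of elements $(s^k)_{k\geq 1}$ lives in a finite set up to equality of sections'' is false, and it is exactly where the proof breaks. By the composition formula, the first-level section $(s^k)_{\sigma}$ is a product of $k$ states of the automaton; since in $\mathbb{N}=\langle s\rangle$ every state equals $s^{e}$ for some $e\geq 1$, one gets $(s^k)_{\sigma}=s^{j}$ with $j\geq k$. So the sections of the powers of $s$ form an unbounded family of pairwise distinct elements of $\mathbb{N}$, not a finite set, and $\tau_{s^m}=\tau_{s^n}$ alone does not yield $s^m=s^n$. (Residual finiteness is also a red herring: $\mathbb{N}$ is residually finite, so Proposition 3.2 of \cite{C} cannot exclude it.) The missing idea — which is the heart of both the paper's proof and Cain's Proposition 4.3 — is to first control the sections of the \emph{generator}: $s$ must itself be a state (it is not a product of two or more elements of $\mathbb{N}$), its sections $s_w$ are again states and hence range over a finite set, and each equals some power $s^{e}$ with $e\geq 1$. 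Choosing $\sigma$ lying on a cycle of the action of $s$ on $\Sigma$ and supposing $s_{\sigma}=s^{m}$ with $m>1$, iterated sectioning at $\sigma$ (i.e.\ $s_{\sigma\sigma}=(s^m)_{\sigma}$, $s_{\sigma\sigma\sigma}$, \dots) produces strictly increasing powers of $s$ among the sections of the single state $s$, contradicting finiteness of the state set. Only after concluding that these sections are all equal to $s$, so that $(s^k)_w=s^k$, does the pigeonhole on the finitely many maps $\tau:\Sigma\to\Sigma$ close up (by induction on levels) to give $s^r=s^t$ for some $r\neq t$. Without that step your argument proves nothing; with it, it becomes essentially the first case of the paper's proof, transplanted to a cleaner example.
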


\begin{proof}
Let $m,n \in \mathbb{N}-\{1\}$, and let $S_{m,n}$ denote the
semigroup with semigroup presentation \newline $<a,b \ | \
b^m=b^n,ab=b>$. We show that $S_{m,n}$ is not an automaton semigroup
for any $m,n$, but $S_{m,n}$ is an expanding automaton semigroup for
any $m,n$.

Note that for any distinct $m,n \in \mathbb{N}$ with $m<n$, the
rewriting system defined by the rules $ab \rightarrow a$ and $b^n
\rightarrow b^m$ is terminating and confluent.  Thus $\{b^ja^n \ | \
j=1,...,n-1, \ n \in \mathbb{N}\}$ is a set of normal forms for
$S_{m,n}$, and so $a$ is not periodic in $S_{m,n}$.

We begin by showing $S_{m,n}$ is not an automaton semigroup.  Fix
$1<m<n$.  Let $\mathcal{A}_{m,n}=(Q,\Sigma,t,o)$ be a synchronous
automaton such that $S(\mathcal{A}_{m,n})$ is generated by two
elements $a$ and $b$ with $b^m=b^n$ and $ab=b$. We show that $a$ is
periodic in $S(\mathcal{A}_{m,n})$.  Note that $a$ and $b$ must both
be states of $\mathcal{A}_{m,n}$ as higher powers of $a$ and $b$
cannot multiply to obtain $a$, and powers of $a$ cannot multiply to
obtain $b$.  Let $\sigma_1 \in \Sigma$ be such that there exists a
minimal number $n>0$ with $a^n(\sigma_1)=\sigma_1$. Since the action
of $a$ is length-preserving, there must exist such a $\sigma_1$. Let
$\{\sigma_1,...,\sigma_{n-1}\}$ be the orbit of $\sigma_1$ under the
action of $a$ where $a(\sigma_i)=\sigma_{i+1}$ for $1\leq i \leq
n-2$ and $a(\sigma_{n-1})=\sigma_m$.

First suppose that $a_{\sigma_j}=a^{m_j}$ for each $1 \leq j \leq
n-1$.  If $m_j>1$ for some $j$, then $(a^{m_j})_{\sigma_j}=a^{n_1}$
where $n_1>m_j$, $(a^{n_1})_{\sigma_j}=a^{n_2}$ where $n_2>n_1$, and
so on.
 In this case, $a$ will have infinitely many sections, which cannot happen since $a$ is a state
 of a finite automaton.  Thus $m_j=1$ for all $j$.  Note that if $a^k(\sigma)=\sigma_1$
 for some $k>0$ and $\sigma \in \Sigma$, then the same logic implies that if $a_{\sigma}=a^r$ for some $r$ then $r=1$.  Thus we see that if
 $\sigma \in \Sigma$ and the section of $a$ at $a^k(\sigma)$ is a power of $a$ for all $k$, then the section
 of $a$ at $a^k(\sigma)$ is $a$ for all $k>0$.  Suppose that $a_{\sigma}=a$
 for all $\sigma \in \Sigma$.  Since the action of $a$ is
 length-preserving, there exist distinct $r,s \in \mathbb{N}$ such
 that $\tau_a^r=\tau_a^s$.  Then, as the only section of $a$ is $a$,
 we have $a^r=a^s$.

 Suppose now that there is a letter $\sigma \in \Sigma$ such that there exists $\sigma'$ in the
 forward orbit of $\sigma$ under the action of $a$ where
 $a_{\sigma'} \not \in <a>$.  Since $ab=b$ and $b$ is periodic, there exist distinct
 $m_{\sigma},n_{\sigma}
 \in \mathbb{N}$ with $n_{\sigma}>m_{\sigma}$ such that
 $(a^{m_{\sigma}})_{\sigma}=(a^{m_{\sigma}+k(n_{\sigma}-m_{\sigma})})_{\sigma}$ for any
 $k \in \mathbb{N}$. To see that this is true, let $t$ be the minimal number such that
 the orbit of $a^t(\sigma)$ under the
 action of $a$ is a cycle.  Since the action of $a$ is
 length-preserving, there must exist such a $t$.  Suppose that there
 is a $k \in \mathbb{N}$ such that $k\geq t$ and the section of $a$ at
 $a^k(\sigma)$ is $b^ia^j$ for some $i \in \mathbb{N}$ and
 $j \in \mathbb{N}\cup \{0\}$.
 Then the relation $ab=b$ implies that for any $k' \geq k$ we have
 $(a^{k'})_{\sigma} = b^{i'}a^j$
 for some $i'$.  Periodicity of $b$ then implies
 that there are $m_{\sigma},n_{\sigma} \geq k$ as desired.  Suppose,
 on the other hand, that the section of $a$ at $a^r(\sigma)$ is in $<a>$
 for all $r \geq t$.  Let $c$ be the maximal number such
 that the section of $a$ at $a^c(\sigma)$ is not in $<a>$ and let $p \in \mathbb{N}$.
 Then  $(a^{c+p})_{\sigma}=a^{n_p}(a^{c})_{\sigma}$ for some $n_p \in \mathbb{N}$
  and the relation $ab=b$ implies that $(a^{c+p})_{\sigma}=(a^{c})_{\sigma}$.
  In this case we let $m_{\sigma}=c$ and $n_{\sigma}=c+1$.

 Let $\hat{\Sigma}=\{\sigma \in \Sigma \ | \ (a^r)_{\sigma} \not \in <a> \ \text{for some} \ r \}$.
 By the preceding paragraph, for each $\sigma \in \hat{\Sigma}$
 choose $m_{\sigma},n_{\sigma} \in \mathbb{N}$ such that
 $(a^{m_{\sigma}})_{\sigma}=(a^{m_{\sigma}+k(n_{\sigma}-n_{\sigma})})_{\sigma}$.
 Since $a$ acts in a length-preserving
 fashion, there exist distinct $t_1,t_2$ such that $\tau_a^{t_1}=\tau_a^{t_1+k(t_2-t_1)}$
 for all $k \in \mathbb{N}$.  Thus we can choose distinct $s,t \in \mathbb{N}$ such that
 $\displaystyle\tau_{a^s}=\displaystyle\tau_{a^{s+k(t-s)}}$ and
 $(a^s)_{\sigma}=(a^{s+k(t-s)})_{\sigma}$ for all $\sigma \in \hat{\Sigma}$ and $k \in \mathbb{N}$.
 We claim that $a^s=a^t$.  To see this, let $\delta \in
 \Sigma$.  If $\eta \in \hat{\Sigma}$, then the choice of $s$ and $t$ implies that
 $(a^s)_{\eta}=(a^t)_{\eta}$.  Fix $\delta \not \in
 \hat{\Sigma}$.
 Then $(a^s)_{\delta}=a^s$ and $(a^t)_{\delta}=a^t$, so the choice of
 $s$ and $t$ implies that
 $\displaystyle\tau_{(a^s)_{\delta}}=\displaystyle\tau_{(a^t)_{\delta}}$.
 If $\eta \in \hat{\Sigma}$, then \[(a^s)_{\delta
 \eta}=(a^s)_{\eta}=(a^t)_{\eta}=(a^t)_{\delta \eta}.\]
 If $\eta \not \in \hat{\Sigma}$ then $(a^s)_{\delta \eta}=a^s$
 and $(a^t)_{\delta \eta}=a^t$, and so
 $\displaystyle\tau_{(a^s)_{\delta
 \eta}}=\displaystyle\tau_{(a^t)_{\delta\eta}}$.  Similarly, let $w \in \Sigma^*$ and write
 $w=\sigma_1...\sigma_n$.  Suppose there is an $i \in \mathbb{N}$ such that
 $\sigma_i \in \hat{\Sigma}$ and $\sigma_1,...,\sigma_{i-1} \in
 \Sigma-\hat{\Sigma}$.  Then \[(a^s)_w=(a^s)_{\sigma_i...\sigma_n}=(a^t)_{\sigma_i...\sigma_n}=(a^t)_w.\]
 On the other hand, if $w \in
 (\Sigma-\hat{\Sigma})^*$  then
 $\displaystyle\tau_{(a^s)_w}=\tau_{a^s}=\tau_{a^t}=\displaystyle\tau_{(a^t)_w}$.
 Thus $a^s=a^t$, and so $S(\mathcal{A}_{m,n})$ is not $S_{m,n}$.

Fix $1<m<n$, and let $\Sigma=\{\sigma_1,...,\sigma_n\}$ be an
alphabet.  Let $\mathcal{A}_{m,n}$ be the automaton with states $a$
and $b$ (which depend on $m,n$) defined by
\[a=(a,...,a)[\sigma_1\sigma_1,\sigma_2,...,\sigma_n], \ \ \
b=(b,...,b)\tau_b\] where
\[\tau_b(\sigma_i)=\begin{cases} \sigma_{i+1} & 1\leq i < n \\
\sigma_m & i=n \end{cases}.\]  Then $b^m=b^n$ in
$S(\mathcal{A}_{m,n})$.  Note also that the range of $b$ is
$\{\sigma_2,...,\sigma_n\}^*$, and $a$ fixes this set.  So $ab=b$.
 Now fix $i,j \in \mathbb{N}$ such that $i < n$. Then
 $b^ia^j(\sigma_1)=b^i(\sigma_1^{2^j})=\sigma_i^{2^j}$.  Thus
 $b^ia^j=b^ka^l$ in $S(\mathcal{A}_{m,n})$ if and only if $i=k$ and
 $j=l$, and we have $S(\mathcal{A}_{m,n})\cong S_{m,n}$. \end{proof}

Recall that the bicyclic monoid is the monoid with monoid
presentation $B:=<a,b \ | \ ab=1>$.  Clifford and Preston show in
Corollary 1.32 of \cite{CP} that $ba \neq 1$ in $B$.  Furthermore,
the same corollary shows that if $S$ is a semigroup and $a,b,c \in
S$ such that $c^2=c$, $ca=ac=c$, $cb=bc=b$, and $ba=c$, then the
submonoid generated by $a$,$b$, and $c$ is the bicyclic monoid if
and only if $ba \neq c$.

\begin{prop}\label{left invertible iff right invertible}
Let $S$ be an expanding automaton semigroup.  If $M$ is a submonoid
of $S$, then $M$ is not isomorphic to the bicyclic monoid.\end{prop}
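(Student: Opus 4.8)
The plan is to argue by contradiction and to prove the statement implicit in the proposition's name: relative to the idempotent that serves as the identity of the putative copy of $B$, left invertibility forces right invertibility. Suppose $M \le S$ is a submonoid isomorphic to the bicyclic monoid $B=\langle a,b \mid ab=1\rangle$, and let $e \in S$ be the identity of $M$, so $e$ is an idempotent of $S$ (though not necessarily an identity of $S$). Under the isomorphism $M\cong B$ I may name the images of the generators $\alpha,\beta \in S$ so that
\[ e\alpha=\alpha e=\alpha,\qquad e\beta=\beta e=\beta,\qquad \alpha\beta=e, \]
while the defining feature of $B$ (Corollary 1.32 of \cite{CP}) gives $\beta\alpha\neq e$. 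The whole proof reduces to deriving $\beta\alpha=e$, which is the contradiction. Here products are composition of functions on $\Sigma^*$ with $(\alpha\beta)(w)=\alpha(\beta(w))$; since $B$ is anti-isomorphic to itself, the labeling can always be chosen to match this convention.

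First I would pass to the image of the idempotent. Put $V:=e(\Sigma^*)\subseteq\Sigma^*$. Because $e^2=e$, the map $e$ fixes $V$ pointwise, and because $\alpha=e\alpha$ and $\beta=e\beta$, both $\alpha$ and $\beta$ send $\Sigma^*$ into $V$; restricting, I obtain self-maps $\alpha|_V,\beta|_V:V\to V$ with $\alpha|_V\circ\beta|_V=(\alpha\beta)|_V=\mathrm{id}_V$. In particular $\beta|_V$ is injective. The key step is then a length computation using the length-expanding property $|w|\le|s(w)|$, valid for every $s\in S(\mathcal{A})$. For $v\in V$,
\[ |v|=|\alpha(\beta(v))|\ge|\beta(v)|\ge|v|, \]
so both inequalities are equalities; in particular $|\beta(v)|=|v|$, i.e. $\beta|_V$ is length-preserving. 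Consequently, for each $\ell$ the finite set $V_\ell:=\{v\in V:|v|=\ell\}\subseteq\Sigma^\ell$ is mapped into itself by $\beta$, and $\beta|_{V_\ell}$ is an injective self-map of a finite set, hence a bijection. Letting $\ell$ range over $\mathbb{N}$ shows $\beta|_V$ is a bijection of $V$. I expect this to be the heart of the matter: length-expansion by itself is too weak, but once $\alpha\beta=e$ is available it forces $\beta$ to be length-\emph{preserving} on $V$, after which finiteness of each level does the rest.

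Finally I would globalize. Since $\beta|_V$ is a bijection and $\alpha|_V\beta|_V=\mathrm{id}_V$, we get $\beta|_V\alpha|_V=\mathrm{id}_V$ as well. Then for arbitrary $w\in\Sigma^*$, using $\alpha e=\alpha$ together with $e(w)\in V$ and $\alpha(e(w))\in V$,
\[ (\beta\alpha)(w)=\beta\bigl(\alpha(e(w))\bigr)=(\beta|_V\alpha|_V)(e(w))=e(w). \]
Hence $\beta\alpha=e$ as elements of $S$, contradicting $\beta\alpha\neq e$. Therefore no submonoid of $S$ is isomorphic to $B$. The only delicate points to verify carefully are that the various one-sided relations do hold (they are immediate from $e$ being the identity of $M$) and that the restriction/globalization bookkeeping is consistent with whichever composition convention the semigroup product uses; the self-anti-isomorphism of $B$ removes any ambiguity in the latter.
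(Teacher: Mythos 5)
Your proof is correct and follows essentially the same route as the paper's: restrict to the range of the idempotent identity, use the length-expanding property together with the relation $\alpha\beta=e$ to force length-preservation, use injectivity plus finiteness of each level of the tree to get bijectivity, and then invert to obtain $\beta\alpha=e$. The paper phrases this via the Clifford--Preston criterion (assuming $ba=c$ and deriving $ab=c$), but the mechanism is identical, and your explicit level-by-level bijectivity argument actually spells out a step the paper leaves implicit.
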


\begin{proof}
Let $S$ be an expanding automaton semigroup over an alphabet
$\Sigma$.  Suppose $a,b,c \in S$ such that $c^2=c$, $ca=ac=c$,
$cb=bc=b$, and $ba=c$.  We show that $ab=c$.

If $s \in S$, let range$(s)$ denote $s(\Sigma^*)$.  Since $c$ is
idempotent, $c$ fixes range$(c)$.  The equations $ca=a$ and $cb=b$
imply that $c$ fixes range$(a)$ and range$(b)$.  Thus range$(a)$,
range$(b) \subseteq$ range$(c)$.  So we see that $a$ must act
injectively on range$(c)$: if $x,y \in $ range$(c)$ and
$a(x)=a(y)=z$, then $b(z)=x$ and $b(z)=y$ and so $x=y$. Furthermore,
because $b$ cannot reduce word length, $a$ must act in a
length-preserving fashion on range$(c)$.  Thus
range$(a)$=range$(c)$.  Now the equation $ba=c$ implies that $b$
maps range$(a)$ onto range$(c)$, and hence $b$ acts injectively and
in a length-preserving fashion on range$(a)$.  Thus if $w \in $
range$(c)$ then $bc(w)=w=a(w)$.  Suppose $w \not \in$ range$(c)$.
 Then $bab(w)=b(w)=bc(w)$.  Since $ab(w),c(w) \in$ range$(c)$ and
 $b$ acts injectively on range$(c)$, $ab(w)=c(w)$.  Thus $ab=c.$\end{proof}

We now distinguish the class of expanding automaton semigroups from
 the class of asynchronous automaton semigroups.

\begin{figure}\label{pic bicyclic submonoid}
\centering\includegraphics[width=71mm]{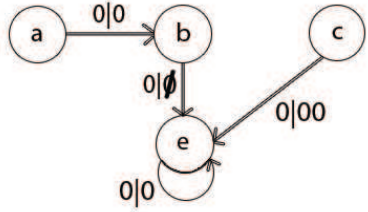}
\caption{The automaton from Proposition \ref{bicyclic submonoid}}
\end{figure}

 \begin{prop}\label{bicyclic submonoid}
The class of expanding automaton semigroups is strictly contained in
the class of asynchronous automaton semigroups.
 \end{prop}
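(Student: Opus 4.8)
The plan is to prove the two inclusions separately. Containment of the class of expanding automaton semigroups in the class of asynchronous automaton semigroups is immediate from the definitions: an expanding automaton $(Q,\Sigma,t,o)$ has output function $o:Q\times\Sigma\to\Sigma^+$, and since $\Sigma^+\subseteq\Sigma^*$ every expanding automaton is in particular an asynchronous automaton, whence every expanding automaton semigroup is an asynchronous automaton semigroup. For strictness the point is to produce an asynchronous automaton semigroup that is \emph{not} expanding. By Proposition \ref{left invertible iff right invertible} no expanding automaton semigroup contains a copy of the bicyclic monoid $B$, so it suffices to exhibit an asynchronous automaton whose semigroup contains $B$ as a submonoid; I will in fact arrange for the generated semigroup to be $B$ itself.

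First I would write down the automaton $\mathcal{A}$ drawn in Figure 2. Take $\Sigma=\{0,1\}$ and states $Q=\{a,b,e\}$, where $e$ is the identity state ($t(e,\sigma)=e$ and $o(e,\sigma)=\sigma$ for $\sigma\in\Sigma$), where $a$ is the ``delete the first letter'' state, defined by $t(a,\sigma)=e$ and $o(a,\sigma)=\emptyset$ for each $\sigma\in\Sigma$, and where $b$ is the ``prepend a $0$'' state, defined by $t(b,\sigma)=e$ and $o(b,\sigma)=0\sigma$ for each $\sigma\in\Sigma$. Because $a$ outputs the empty word, $\mathcal{A}$ is genuinely asynchronous and is not expanding; note also that every section of $a$ and of $b$ equals $e$, so $Q$ is closed under taking sections and $\mathcal{A}$ is a legitimate finite automaton.

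Next I would compute the induced functions and verify the bicyclic relations, recalling that our functions act on the left and that $q(\emptyset)=\emptyset$. Reading a nonempty word shows $a(\sigma_1\cdots\sigma_n)=\sigma_2\cdots\sigma_n$ and $b(\sigma_1\cdots\sigma_n)=0\sigma_1\cdots\sigma_n$, so $a\circ b$ prepends a $0$ and then deletes it, giving $ab=e$; on the other hand $ba(\sigma)=b(\emptyset)=\emptyset\neq\sigma$, so $ba\neq e$. Thus $a,b,e$ satisfy the hypotheses of Corollary 1.32 of \cite{CP} with $c=e$ the identity function, and since $ba\neq e$ that corollary identifies the submonoid $\langle a,b,e\rangle=\langle a,b\rangle$ as the bicyclic monoid; as $e=ab$, this submonoid is all of $S(\mathcal{A})$. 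Applying Proposition \ref{left invertible iff right invertible} then shows $S(\mathcal{A})$ is not an expanding automaton semigroup, completing the strict inclusion.

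The main thing to get right is the faithfulness of this representation, namely that the only relation forced on $a$ and $b$ is $ab=1$ and that no further collapse occurs. If one prefers not to invoke the cited corollary, this can be checked directly: every word in $a,b$ rewrites via $ab\to 1$ to the normal form $b^ia^j$, and evaluating on the test word $1^N$ for large $N$ gives $b^ia^j(1^N)=0^i1^{N-j}$, from which both $i$ and $j$ can be recovered, so the functions $b^ia^j$ are pairwise distinct. The only other delicate point is the interaction of the prepending map $b$ with the convention $q(\emptyset)=\emptyset$: the map $b$ does not prepend on the empty word, but this does not disturb the identity $ab=e$ and is in fact exactly what makes $ba$ fail to be the identity on one-letter words.
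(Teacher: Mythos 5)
Your proof is correct and follows essentially the same route as the paper: exhibit an explicit asynchronous automaton whose semigroup contains the bicyclic monoid (identified via Corollary 1.32 of \cite{CP} from the relations $xy=e$ and $yx\neq e$), then rule it out as an expanding automaton semigroup by Proposition \ref{left invertible iff right invertible}. The paper's witness is a four-state automaton over the unary alphabet $\{0\}$ rather than your delete/prepend machine over $\{0,1\}$, but this difference is immaterial.
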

\begin{proof}
Let $\mathcal{A}$ be the asynchronous automaton over the alphabet
$\{0\}$ with four states defined by \[ a=(b)[0], \ \
b=(e)[\emptyset], \ \ c=(e)[00], \ \ e=(e)[0].\]  Figure \ref{pic
bicyclic submonoid} gives the graphical representation of
$\mathcal{A}$.  Note that $e(0^n)=0^n$ for all $n \in \mathbb{N}$,
so $e$ is an identity element of $S(\mathcal{A})$. Note also that by
construction $ac(0^n)=0^n=e(0^n)$ for any $n$, but $ca(0)=00$. Thus
$ac=e$ but $ca \neq e$ in $S(\mathcal{A})$.  So Corollary 1.32 of
\cite{CP} implies that the submonoid generated by $a$ and $c$ is the
bicyclic monoid, and Proposition \ref{left invertible iff right
invertible} implies that $S(\mathcal{A})$ is not an expanding
automaton semigroup.
\end{proof}

\section{Decision Properties and Dynamics}

We begin this section by showing that expanding automaton semigroups
have richer boundary dynamics than automaton semigroups. Proposition
\ref{aut sgp fixed points on bdy} restricts the kind of action that
an automaton semigroup can have on the boundary of a tree, and
Example \ref{Thue-Morse} gives an expanding automaton semigroup
which shows that this restriction does not extend to the dynamics of
these semigroups.  Example \ref{Thue-Morse} also provides a
realization of the free semigroup of rank 1 as an expanding
automaton semigroup. Proposition 4.3 of \cite{C} shows that the free
semigroup of rank 1 is not an automaton semigroup, so Example
\ref{Thue-Morse} provides another example of an expanding automaton
semigroup that is not an automaton semigroup.  Let $S$ be a
semigroup acting on a set $X$, and $s \in S$.  We say that $x \in X$
is a \emph{fixed point of $s$} if $s(x)=x.$

\begin{prop}\label{aut sgp fixed points on bdy}
Let $S$ be an automaton semigroup with corresponding automaton
$\mathcal{A}=(Q,\Sigma,t,o)$. If every state of $\mathcal{A}$ has at
least two fixed points in $\Sigma^{\omega}$, then every state of
$\mathcal{A}$ has infinitely many fixed points in $\Sigma^{\omega}$.
\end{prop}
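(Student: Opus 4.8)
The plan is to reduce the statement to a purely combinatorial fact about infinite paths in a finite directed graph. Since $\mathcal{A}$ is synchronous, each state acts in a level-preserving, prefix-preserving way, so an infinite word $\omega=\sigma_1\sigma_2\cdots$ is fixed by a state $q$ precisely when every prefix is fixed, i.e. when $o(q_{i-1},\sigma_i)=\sigma_i$ for all $i$, where $q_0=q$ and $q_i=t(q_{i-1},\sigma_i)$. I would therefore build a directed graph $G$ on the vertex set $Q$ with an edge $p\to p'$ labeled $\sigma$ exactly when $o(p,\sigma)=\sigma$ and $t(p,\sigma)=p'$. Reading off labels shows that the fixed points of $q$ in $\Sigma^{\omega}$ are in bijection with the infinite directed paths in $G$ beginning at $q$; the bijection is clean because the out-edges at any vertex carry distinct labels, so a path is recovered from its label word. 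Thus the hypothesis becomes: every vertex of $G$ starts at least two infinite paths, and the goal becomes: every vertex starts infinitely many.

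For a vertex $v$ let $N(v)\in\{0,1,2,\dots\}\cup\{\infty\}$ be the number of infinite paths from $v$. The two facts I would extract are: (i) deleting the first edge gives $N(v)=\sum_{v\to w}N(w)$, the sum running over the out-edges of $v$, and in particular $N(v)=\infty$ whenever $v$ can reach some $w$ with $N(w)=\infty$ (prepend the connecting path to infinitely many continuations); and (ii) a directed cycle in which every vertex has out-degree exactly $1$ carries exactly one infinite path, so each of its vertices has $N=1$.

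The heart of the argument is then a contradiction. Suppose some state $z$ has $2\le N(z)<\infty$. Let $R$ be the set of vertices reachable from $z$; it is closed under taking out-neighbors, and by fact (i) no $v\in R$ can have $N(v)=\infty$ (else $N(z)=\infty$), so every vertex of $R$ has a finite $N$-value, which is $\ge 2$ by hypothesis. Let $m=\min_{v\in R}N(v)\ge 2$ and pick a minimizer $u$. If $u$ had two or more out-edges, then $N(u)=\sum_{u\to w}N(w)\ge 2m>m$, a contradiction; so $u$ has a unique out-neighbor $u'$, and $N(u')=N(u)=m$, making $u'$ another minimizer of out-degree $1$. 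Iterating, I follow a forced path of out-degree-$1$ minimizers inside the finite set $R$ until it repeats, producing a directed cycle all of whose vertices have out-degree $1$. By fact (ii) those vertices have $N=1$, contradicting $m\ge 2$. Hence there is no state with finitely many but at least two fixed points, and combined with the hypothesis every state has infinitely many fixed points.

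I expect the main obstacle to be making the counting in the second step fully rigorous in the presence of cycles: the additive identity for $N$, and especially the implication that reaching an infinite-$N$ vertex forces $N=\infty$, must be justified at the level of cardinalities of path sets rather than as a naive recursion, and one must verify that $R$ is genuinely closed under out-edges so that the minimizer argument never escapes it. Everything else—the translation to $G$ and the finite-cycle bookkeeping—should be routine once the bijection between boundary fixed points and infinite paths is in place.
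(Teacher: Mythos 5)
Your proof is correct, and it shares the paper's essential first move: restricting attention to the edges of the form $\sigma\,|\,\sigma$ (the paper calls walks along such edges \emph{inactive paths}) and identifying the fixed points of $q$ in $\Sigma^{\omega}$ with infinite directed paths from $q$ in that subgraph. Where you genuinely differ is in the combinatorial endgame. The paper argues constructively: finiteness of $Q$ forces an inactive circuit $c_1$ accessible from $q$ along an inactive path; a state $q_1$ on $c_1$ again has two fixed points, which is claimed to yield a second inactive circuit containing or accessible from $q_1$; and ``pumping'' the two circuits manufactures infinitely many distinct fixed points. You instead run an extremal counting argument: the recursion $N(v)=\sum_{v\to w}N(w)$, the observation that a vertex reaching an infinite-$N$ vertex has $N=\infty$, and the fact that a finite minimizer of $N$ over the reachable set must have out-degree one, which forces a cycle of out-degree-one vertices carrying exactly one infinite path and hence $N=1$, contradicting $N\ge 2$. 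Your route buys rigor at the two points the paper leaves informal --- why a \emph{second} circuit must exist, and why pumping produces genuinely distinct infinite words --- since the counting handles both automatically; the price is the cardinality bookkeeping you already flag, which is routine (finite sums of cardinals, prepending a fixed finite prefix to infinitely many distinct tails). The paper's version, in exchange, is shorter and exhibits the infinitely many fixed points explicitly. Both arguments are sound.
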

\begin{proof}
We begin with some terminology.  We call a path $p$ in $\mathcal{A}$
an \emph{inactive path} if each edge on $p$ has the form $\sigma |
\sigma$ for some $\sigma \in \Sigma$.

Let $q \in Q$.  Since $\mathcal{A}$ is a synchronous automaton, $q$
acts in a length-preserving fashion.  Since $q$ has a fixed point in
$\Sigma^{\omega}$, in the finite automaton $\mathcal{A}$ there must
exist an inactive circuit $c_1$ accessible from $q$ via an inactive
path $p$.  Let $q_1$ be a state on $c_1$. As $q_1$ must also have
two fixed points in $\Sigma^{\omega}$, either there is another
inactive circuit containing $q_1$ or there is another inactive
circuit accessible from $q_1$ via an inactive path.  In either case,
$q$ has infinitely many fixed points in $\Sigma^{\omega}$ by
``pumping" the two inactive circuits.\end{proof}

\begin{ex}\label{Thue-Morse} (Thue-Morse Automaton):  This example is constructed to model
the substitution rules which give the Thue-Morse sequence.  This
infinite binary sequence, denoted $(T_i)$, is the limit of 0 under
iterations of the substitution rules $0\rightarrow 01, 1 \rightarrow
10$. The complement of the Thue-Morse sequence, denoted
$(\overline{T_i})$, is the limit of
  $1$ under iterations of these substitution rules.  For more information on these sequences, see Section 2.2 of \cite{L} by Lothaire.

Consider the expanding automaton $\mathcal{A}$ given by
$a=(a,a)[01,10]$ over the alphabet $\Sigma = \{0,1\}$. First note
that $S(\mathcal{A})$ is the free semigroup of rank 1. To see this,
by construction of $\mathcal{A}$ we have $|a^n(0)|=2^n$ for all $n$,
and thus $a^m \neq a^n$ for any $m \neq n$.

 Also by construction of  $\mathcal{A}$, the action of $S(\mathcal{A})$ has exactly two fixed points
 in $\{0,1\}^{\omega}$: $(T_i)$ and
  $(\overline{T_i})$.  To see this, first notice that $(T_i)$ and
  $(\overline{T_i})$ are the fixed points of $a$ (see section 2.1 of
  \cite{L}).  Thus $(T_i)$ and $(\overline{T_i})$ are fixed points of
  $a^n$ for any $n$.  Furthermore, $a^n=(a^n,a^n)\tau_{a^n}$ where
  $\tau_{a^n}$ maps 0 to the prefix of length $2^n$ of $(T_i)$ and
  maps 1 to the prefix of length $2^n$ of $(\overline{T_i})$.  Thus
  section 2.1 of \cite{L} implies that $a^n$ has exactly two fixed
  points for all $n$.  \end{ex}

The following proposition gives an algorithm for solving the uniform
word problem in the class of expanding automaton semigroups.  This
proposition is a special case of Theorem 2.15 of \cite{GVS}, which
shows that asynchronous automaton semigroups have solvable uniform
word problem.  We include a proof for completeness.

\begin{prop}\label{word problem solvable}
Expanding automaton semigroups have solvable uniform word problem.
\end{prop}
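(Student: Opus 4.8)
The plan is to exhibit a breadth-first search algorithm that, given an expanding automaton $\mathcal{A}=(Q,\Sigma,t,o)$ and two words $u,v \in Q^*$, decides whether $u$ and $v$ induce the same function $\Sigma^* \rightarrow \Sigma^*$. The engine is a recursive characterization of equality. For an expanding endomorphism $s$ we have $s(\sigma w)=\tau_s(\sigma)s_\sigma(w)$, so iterating the section formula gives, for $w=\sigma_1\cdots\sigma_k$,
\[
s(w)=\tau_s(\sigma_1)\,\tau_{s_{\sigma_1}}(\sigma_2)\cdots\tau_{s_{\sigma_1\cdots\sigma_{k-1}}}(\sigma_k).
\]
Thus $s(w)$ is completely determined by the letter-maps $\tau_{s_{w'}}$ as $w'$ ranges over the prefixes of $w$. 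Using left cancellation in the free monoid $\Sigma^*$, one checks that $u=v$ in $S(\mathcal{A})$ if and only if $\tau_{u_w}=\tau_{v_w}$ for every $w \in \Sigma^*$: if the letter-maps all agree the displayed formula forces $u(w)=v(w)$, while if $\tau_{u_{w_0}}$ and $\tau_{v_{w_0}}$ disagree on some $\sigma$ then $u$ and $v$ are separated at $w_0$ or at $w_0\sigma$.

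First I would record that each ingredient is computable. Given $u=q_1\cdots q_n$, Equation (\ref{composition formula}) together with $\mathcal{A}$ lets us compute $\tau_u$ and, for each $\sigma\in\Sigma$, the section $u_\sigma$ as an explicit word in $Q^*$; and comparing $\tau_{u_w}$ with $\tau_{v_w}$ is just comparing two functions $\Sigma\rightarrow\Sigma^+$, i.e.\ finitely many words in $\Sigma^+$. The algorithm then maintains a worklist of pairs $(s,t)\in Q^*\times Q^*$, initialized with $(u,v)$, together with a record of processed pairs. For the current pair it tests whether $\tau_s=\tau_t$; if not, it halts and reports $u\neq v$. Otherwise, for each $\sigma\in\Sigma$ it computes the sections $s_\sigma,t_\sigma$ and adds $(s_\sigma,t_\sigma)$ to the worklist unless that pair has already been processed. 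If the worklist empties without ever finding $\tau_s\neq\tau_t$, it reports $u=v$.

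Correctness is immediate from the characterization above: the pairs the algorithm examines are exactly the $(u_w,v_w)$ for $w\in\Sigma^*$, so it reports $u=v$ precisely when $\tau_{u_w}=\tau_{v_w}$ for all $w$. The crux is termination, and here I would lean on the structural fact recorded after Equation (\ref{composition formula}): if $t$ is a section of $s$, then the $Q^*$-length of $t$ is at most that of $s$. Hence every section $u_w$ is represented by a word of $Q^*$-length at most $|u|$, and every $v_w$ by a word of length at most $|v|$; so only finitely many distinct section-pairs can arise, at most $|\{x\in Q^* : |x|\leq|u|\}|\cdot|\{x\in Q^* : |x|\leq|v|\}|$ of them. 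Since no pair is processed twice, the worklist empties after finitely many steps.

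I expect the main obstacle to be cleanly justifying the finiteness of the reachable section-pairs, hence termination: one must verify that the sections of a product $q_1\cdots q_n$ are themselves represented by $Q^*$-words of bounded length, which is exactly the induction on word length in $Q^*$ alluded to in the paper. Everything else—computing the $\tau$'s and sections via the composition formula and comparing finitely many output words—is mechanical, so the bulk of the writeup is bookkeeping rather than genuine difficulty.
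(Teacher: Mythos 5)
Your proposal is correct and follows essentially the same route as the paper: compare $\tau_{u_w}$ and $\tau_{v_w}$ over all sections, computed via Equation (\ref{composition formula}), and terminate because every section of a word of $Q^*$-length $m$ is again represented by a word of $Q^*$-length at most $m$, so only finitely many section-pairs arise. Your worklist-with-memoization phrasing and the explicit left-cancellation justification of the correctness criterion are minor elaborations of the paper's level-by-level argument, not a different approach.
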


\begin{proof}
Let $A=(Q,\Sigma,t,o)$ be an expanding automaton, and let $S=S(A)$.
Let $s=q_1...q_m$ and $t=q_1'...q_n'$ be elements of $S$.  If
$\tau_{s} \neq \tau_{t}$, then $s \neq t$. If $\tau_{s} = \tau_{t}$,
then use Equation (\ref{composition formula}) to calculate
$s_{\sigma}$ and $t_{\sigma}$ for all $\sigma \in \Sigma$.  If
$\tau_{s_{\sigma}} \neq \tau_{t_{\sigma}}$ for some $\sigma \in
\Sigma$, then $s \neq t$. If $\tau_{s_{\sigma}}= \tau_{t_{\sigma}} \
\forall \ \sigma \in \Sigma$, then calculate $\tau_{s_w},\tau_{t_w}$
for each $w \in \Sigma^+$ with $|w|=2$, and continue the process.
Since $|\{s_w \ : \ w \in \Sigma^*\}| \leq |\{w' \in Q^* \ : \ |w'|
\leq m \}|$ and $|\{t_w \ : \ w \in \Sigma^*\}| \leq |\{w' \in Q^* \
: \ |w'| \leq n \}|$, this process stops in finite time.\end{proof}

We now turn to showing that undecidability arises in the dynamics of
these semigroups.

\begin{thm}\label{post corr thm}
\begin{enumerate}

\item There is no algorithm which takes as input an expanding
automaton
\newline $\mathcal{A}=(Q,\Sigma,t,o)$ and states $q_1,q_2 \in Q$ and
decides whether or not there is a word $w \in \Sigma^*$ with
$q_1(w)=q_2(w)$.
\item There is no algorithm which takes as input an expanding automaton
\newline $\mathcal{A}=(Q,\Sigma,t,o)$ and states $q_1,q_2 \in Q$ and decides
whether or not there is an infinite word $\omega \in
\Sigma^{\omega}$ such that $q_1(\omega)=q_2(\omega)$.
\end{enumerate}
\end{thm}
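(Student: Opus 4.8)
The plan is to reduce the undecidability of Post's Correspondence Problem (PCP) to each of these two decision problems. Recall that PCP asks, given two finite lists of words $u_1,\dots,u_k$ and $v_1,\dots,v_k$ over some alphabet, whether there is a nonempty sequence of indices $i_1,\dots,i_r$ such that $u_{i_1}\cdots u_{i_r}=v_{i_1}\cdots v_{i_r}$. Since PCP is undecidable, it suffices to build, from an arbitrary PCP instance, an expanding automaton $\mathcal{A}$ together with two states $q_1,q_2$ so that the existence of a match in the PCP instance corresponds exactly to the existence of a word $w$ (respectively an infinite word $\omega$) with $q_1(w)=q_2(w)$ (respectively $q_1(\omega)=q_2(\omega)$). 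The whole construction must be effective: given the lists, we must be able to write down $\mathcal{A}$, $q_1$, and $q_2$ mechanically.

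First I would encode the two sides of the correspondence as two expanding endomorphisms. The natural idea is to take an input alphabet containing one letter $\sigma_i$ for each index $i\in\{1,\dots,k\}$ (plus possibly a padding or sentinel letter to handle the nonemptiness and synchronization constraints), and to let $q_1$ read the index letter $\sigma_i$ and output $u_i$, while $q_2$ reads $\sigma_i$ and outputs $v_i$. Because each $u_i$ and each $v_i$ is a nonempty word, these output functions map $\Sigma$ into $\Sigma^+$, so the resulting automaton is genuinely expanding and not merely asynchronous; this is exactly why the theorem can be stated for the restricted class of expanding automata. Reading a word $w=\sigma_{i_1}\cdots\sigma_{i_r}$ then yields $q_1(w)=u_{i_1}\cdots u_{i_r}$ and $q_2(w)=v_{i_1}\cdots v_{i_r}$, so $q_1(w)=q_2(w)$ holds precisely when the index sequence is a PCP solution. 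This gives part (1), modulo the technical point that we must forbid the empty index sequence and must make sure the two output alphabets live inside a common $\Sigma$; both are handled by a careful choice of sentinel letters and by having the automaton loop back to the same pair of states after reading each index letter.

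For part (2), the goal is to detect an infinite solution, i.e.\ an infinite index sequence whose two concatenated images agree as infinite words. I would reuse the same construction but argue that $q_1(\omega)=q_2(\omega)$ for some $\omega\in\Sigma^{\omega}$ if and only if the PCP instance admits a finite solution. One direction is immediate: a finite solution $w$ can be pumped, reading $w^{\omega}$, to produce an infinite word on which the two states agree. The converse is the delicate direction and is where I expect the main obstacle to lie: an infinite match need not arise from periodically repeating a finite match, so I must rule out ``genuinely infinite'' solutions that have no finite sub-solution, or else modify the construction so that the only infinite matches are the periodic ones. The standard device is to force each index letter to realign the two outputs by inserting a sentinel letter that appears in $u_i$ at a controlled position relative to $v_i$; tracking the running difference in length between the two output streams shows that agreement can persist infinitely only if the lengths resynchronize infinitely often, which in turn forces a finite prefix that is itself a solution.

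The main obstacle, then, is this equivalence between infinite matches and finite matches in part (2), together with keeping the construction inside the expanding class throughout. The length-expanding constraint is helpful rather than harmful here: since neither $q_1$ nor $q_2$ can delete letters, the ``offset'' between $q_1(\omega)$ and $q_2(\omega)$ after reading a finite prefix is governed entirely by the cumulative length difference $\sum(|u_{i_j}|-|v_{i_j}|)$, a quantity I can analyze combinatorially. I would organize the proof so that part (1) is established first and essentially self-contained, then derive part (2) by the pumping argument in the easy direction and by a bounded-offset/resynchronization analysis in the hard direction, citing the undecidability of PCP as the single external input.
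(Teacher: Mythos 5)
Your reduction for part (1) is essentially the paper's: one index letter per pair of words, with $q_1$ outputting $u_i$ and $q_2$ outputting $v_i$ and both states acting as sinks. The paper handles your ``synchronization'' worry concretely with two sentinel letters $z_1,z_2$: every non-index letter is sent to $z_1$ by $q_1$ and to $z_2$ by $q_2$, so any word containing a non-index letter is automatically a non-solution and the only candidate solutions are index words. That part of your proposal is sound, and since every $u_i,v_i$ is nonempty the automaton is indeed expanding, as you note.

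Part (2) has a genuine gap. You propose to show that $q_1(\omega)=q_2(\omega)$ for some infinite $\omega$ if and only if the PCP instance has a \emph{finite} solution, proving the hard direction by a ``bounded-offset/resynchronization'' argument: that agreement on an infinite word forces the cumulative length difference $\sum_j(|u_{i_j}|-|v_{i_j}|)$ to return to zero infinitely often, hence yields a finite sub-solution. This is false. An infinite match only requires that at every finite stage one concatenation be a prefix of the other; the offset can grow without bound while the two infinite words still coincide. The standard PCP encoding of a Turing machine illustrates this: a non-halting computation produces an infinite index sequence whose two concatenations agree as infinite words (at each finite stage one side is a proper prefix of the other, with the offset growing with configuration length), yet there is no finite solution. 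So the infinite-match problem is genuinely different from finite PCP, and no sentinel-insertion scheme will force resynchronization without destroying the reduction. The paper avoids this entirely by invoking Ruohonen's theorem that the \emph{infinite} Post Correspondence Problem is itself undecidable, and then running the identical automaton construction against that problem. You need that result (or an equivalent external input) to close part (2); the equivalence you hope to prove between infinite and finite matches does not hold.
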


\begin{proof}
We show undecidability by embedding the Post Correspondence Problem.
Let $X=\{x_1,...,x_m\}$ be an alphabet, and let $V=(v_1,...,v_n)$
and $W=(w_1,...,w_n)$ be two lists of words over $X$. Let
$Y=\{1,...,n\} \subseteq \mathbb{N}$ and $Z=\{z_1,z_2\}$ be
alphabets such that $X \cap Y \cap Z = \emptyset$. Undecidability of
the Post Correspondence Problem  implies that, in general, we cannot
decide if there is a sequence $(y_1,...,y_t)$ of elements of $Y$
such that $v_{y_1}v_{y_2}...v_{y_t}=u_{y_1}u_{y_2}...u_{y_t}$.

We build an expanding automaton $\mathcal{A}_{X,V,W}$ over the
alphabet $\Sigma:=X \cup Y \cup Z$ as follows.  Let the state set
$Q$ of $\mathcal{A}_{X,V,W}$ be $\{a,b\}$, and let
\[t(q,\sigma)=q \ \text{for all} \ q \in Q, \sigma \in \Sigma\]
\[o(a,i)=v_i \ \text{for} \ 1 \leq i \leq n, \ \ o(a,\sigma)=z_1 \
\text{for} \ \sigma \in \Sigma-Y\]
\[o(b,i)=w_i \ \text{for} \ 1 \leq i \leq n, \ \ o(b,\sigma)=z_2 \
\text{for} \ \sigma \in \Sigma-Y\]  Figure \ref{PCP example} shows
$\mathcal{A}_{X,U,W}$ where $X=\{s,t\}$, $V=(st,ts^2,t^2)$, and
$W=(s^2,tsts,t^2s)$.

Note that for any $w \in \Sigma^*$, $a(w)$ does not contain the
letter $z_2$; similarly, $b(w)$ does not contain the letter $z_1$.
Now if $w \in \Sigma^*$ contains a letter of $X \cup Z$, then we
know $a(w) \neq b(w)$ since $a(w)$ contains the letter $z_1$ and
$b(w)$ contains the letter $z_2$.  Thus if there is a word $w \in
\Sigma^*$ such that $a(w)=b(w)$, then $w \in Y^*$.  By construction
of $\mathcal{A}_{X,V,W}$, if $y=y_1y_2...y_n \in Y^*$ and
$a(y)=b(y)$, then
$v_{y_1}v_{y_2}...v_{y_t}=u_{y_1}u_{y_2}...u_{y_t}$.  Thus the
expanding automaton $\mathcal{A}_{X,V,W}$ simulates Post's problem,
and since we cannot decide the Post Correspondence Problem, we
cannot decide if there is a word $w \in Y^*$ with $a(w)=b(w)$. This
proves part (1).

It is shown by Rouhonen in \cite{R} that the infinite Post
Correspondence Problem is undecidable. That is, there is no
algorithm that takes as input two lists of words $v_1,...,v_n$ and
$w_1,...,w_n$ over an alphabet $X$ and decides if there is an
infinite sequence $(i_k)_{k=1}^{\infty}$ such that
$v_{i_1}v_{i_2}...=w_{i_1}w_{i_2}...$.  Thus, using the same
expanding automata and logic as above, (2) is proven. \end{proof}

\begin{figure}\label{PCP example}
\centering\includegraphics[width=72mm]{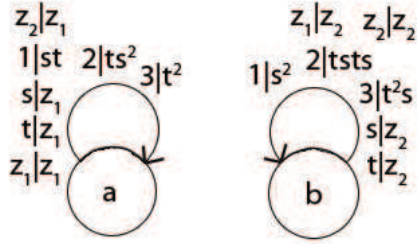} \caption{The
automaton $\mathcal{A}_{X,U,W}$ where $X=\{s,t\}$,
$V=(st,ts^2,t^2)$, and $W=(s^2,tsts,t^2s)$}
\end{figure}

We now show that undecidability arises when trying to understand the
fixed point sets of elements of asynchronous automaton semigroups.
If $w \in A^*$ for a set $A$, let $\text{Pref}_k(w)$ denote the
prefix of $w$ of length $k$.

\begin{mydef}
Let $A^*$ be a free monoid.  A subset $C \subseteq A^*$ is a {\bf
prefix code} if
\begin{enumerate}
\item C is the basis of a free submonoid of $A^*$
\item If $c \in C$, then $\text{Pref}_k(c) \not \in C$ for all $1
\leq k \leq |C|$
\end{enumerate}
\end{mydef}

The \emph{prefix code Post correspondence problem} is a stronger
form of the Post Correspondence Problem. The input of the prefix
code Post Correspondence Problem is two lists of words $v_1,...,v_n$
and $w_1,...,w_n$ over an alphabet $X$ such that $\{v_1,...,v_n\}$
and $\{w_1,...,w_n\}$ are prefix codes. A solution to the problem is
a sequence of indices $(i_k)_{1\leq k \leq m}$ with $1 \leq i_k \leq
n$ such that $v_{i_1}...v_{i_m}=w_{i_1}...w_{i_m}$. Rouhonen also
shows in \cite{R} that this form of Post's problem is undecidable.
We use the prefix code Post problem to prove the following:

\begin{thm}\label{prefix code post corr thm}
\begin{enumerate}
\item There is no algorithm that takes as input an asynchronous
automaton $\mathcal{A}$ over an alphabet $X$, a subset $Y \subseteq
X$, and a state $q$ of $\mathcal{A}$ and decides whether or not $q$
has a fixed point in $Y^*$, i.e. decides if there is a word $w \in
Y^*$ such that $q(w)=w$.
\item There is no algorithm that takes as input
an asynchronous automaton $\mathcal{A}$ over an alphabet $X$, a
subset $Y \subseteq X$, and a state $q$ of $\mathcal{A}$ and decides
whether or not $q$ has a fixed point in $Y^{\omega}$, i.e. decides
if there is an infinite word $\omega \in Y^{\omega}$ such that
$q(\omega)=\omega$.
\end{enumerate}
\end{thm}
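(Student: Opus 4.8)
The plan is to embed Rouhonen's prefix code Post correspondence problem, using its finite form for part (1) and its infinite form (as in the proof of Theorem \ref{post corr thm}) for part (2). The key point is that the fixed point condition $q(w)=w$ forces $q$ to recover the index sequence \emph{from} the common string rather than expand a given index word two ways, so the natural transducer must \emph{decode} its input. This is exactly where the prefix code hypothesis is used: since no $v_i$ is a prefix of another, a finite-state device can read a word over $X_0$ and factor it unambiguously into $v$-codewords, learning each index at the instant a codeword is completed. It is also where I need the full asynchronous model rather than the expanding model: while reading the interior of a codeword the transducer must output the empty word, which an expanding automaton cannot do.

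Concretely, given lists $v_1,\dots,v_n$ and $w_1,\dots,w_n$ over $X_0$, I would build an asynchronous automaton over $X = X_0 \cup \{\bot\}$ with a fresh symbol $\bot \notin X_0$, set $Y = X_0$, and take $q$ to be the state whose underlying graph is the prefix-tree of $\{v_1,\dots,v_n\}$: reading a symbol that extends the current partial codeword to a proper prefix outputs $\emptyset$ and descends; reading the final symbol of $v_i$ outputs $w_i$ and returns to the root; and any symbol with no continuation passes to a sink that outputs $\bot$ on everything. Because $\bot \notin Y$, any input whose image contains $\bot$ is automatically not fixed, so every candidate fixed point in $Y^*$ is read without parse failure. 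For a word consumed as a complete factorization $u = v_{i_1}\cdots v_{i_m}$ one gets $q(u)=w_{i_1}\cdots w_{i_m}$, so $q(u)=u$ holds precisely when $v_{i_1}\cdots v_{i_m}=w_{i_1}\cdots w_{i_m}$, i.e. when $(i_1,\dots,i_m)$ solves the instance.

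Part (2) then follows cleanly from the infinite prefix code PCP. For $\omega \in Y^{\omega}$, either $\omega$ lies in the $\omega$-closure of the free monoid on $\{v_i\}$, in which case unique decodability gives a single infinite index sequence $(i_k)$ with $q(\omega)=w_{i_1}w_{i_2}\cdots$, so $q(\omega)=\omega$ exactly when $v_{i_1}v_{i_2}\cdots = w_{i_1}w_{i_2}\cdots$ is an infinite solution; or the parse fails at some finite stage, $\bot$ appears, and $q(\omega)\neq\omega$. Since an infinite word has no last codeword, there is no ``incomplete tail'' to worry about, and the equivalence is exact; undecidability of the infinite prefix code PCP in \cite{R} gives part (2).

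I expect the genuine difficulty to be the \emph{faithfulness of part (1)}, namely ruling out finite fixed points that do not come from solutions. The danger is a word that ends in the middle of a $v$-codeword: the transducer emits the $w$-words only of the \emph{completed} codewords and silently drops the unfinished tail $p$, so $q(u)=u$ can hold with $w_{i_1}\cdots w_{i_m}=v_{i_1}\cdots v_{i_m}\,p$, a partial match rather than a genuine solution. One cannot simply detect completion by marking codeword boundaries, since any per-codeword marker forces the $v$- and $w$-boundaries to align and collapses the PCP to the trivial letter-by-letter problem; and the automaton receives no end-of-input signal it could test against. The crux of part (1) is therefore to certify that the only finite fixed points arise from \emph{complete} factorizations. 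My intended attack is to refine the gadget so that incomplete reads are forced into the $\bot$-sink, for instance by adjoining a single terminal codeword and routing every configuration that is not at a factorization boundary when this terminal is read to the sink, while arranging the reduction so that no degenerate word (such as the terminal alone) becomes a spurious fixed point; alternatively, to reduce from a normalized instance in which partial matches necessarily extend to solutions. Getting this completion-detection correct without reintroducing boundary alignment is the main obstacle, and it is the step I would develop most carefully.
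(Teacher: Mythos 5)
Your strategy is the paper's: reduce from Ruohonen's prefix-code Post Correspondence Problem (finite version for part (1), infinite version for part (2)), exploit the prefix-code hypothesis to build a finite-state \emph{decoder} whose interior transitions output $\emptyset$, and use a poison letter lying outside $Y$ so that parse failures can never produce fixed points. Only the packaging differs: the paper keeps the two-state encoder $\mathcal{A}_{X,C,D}$ from Theorem \ref{post corr thm}, adjoins a decoder state $c'$ with $c'c$ the identity on $\{1,\dots,m\}^*$, and asks for fixed points of $q=c'd$ in the index alphabet $Y=\{1,\dots,m\}$, whereas you fuse decoding and re-encoding into a single transducer and ask for fixed points over the base alphabet $Y=X_0$. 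These are mirror images of one another and neither buys anything essential over the other.

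The obstacle you isolate --- certifying that a finite fixed point cannot come from an \emph{incomplete} factorization, i.e.\ from a partial match $w_{i_1}\cdots w_{i_m}=v_{i_1}\cdots v_{i_m}p$ with $p$ a nonempty proper prefix of a codeword consumed silently --- is genuine, and your proposal does not close it: the sketched repairs (a terminal codeword, or normalized instances) are not carried out, so the backward direction of the reduction in part (1) is unproven as written. You should know, though, that the paper's proof does not close it either; a fixed point $w=i_1\cdots i_k$ of $c'd$ only certifies that $c_{i_1}\cdots c_{i_k}$ is a prefix of $d_{i_1}\cdots d_{i_k}$ with silent remainder (for instance $C=\{ab\}$, $D=\{aba\}$ gives $c'd(1)=1$ while the instance has no solution), and the paper simply asserts that such a $w$ is a solution. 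So you have correctly located the one step that actually needs an argument --- either an appeal to undecidability of the appropriate ``prefix'' variant of the prefix-code PCP, or a normalization of the input instances forcing partial matches to extend to solutions --- and you are right that this, not the automaton construction, is where the work lies. Part (2) is clean in both treatments, since an infinite word leaves no unconsumed tail.
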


\begin{proof}
Let $X$ be an alphabet, and let $C,D \subseteq X^*$ be prefix codes
where $C=\{c_1,...,c_m\}$ and $D=\{d_1,...,d_m\}$. Let
$\mathcal{A}_{X,C,D}$ be the expanding automaton with states $c,d$
that we constructed in the proof of Proposition \ref{post corr thm}.
Then $\mathcal{A}_{X,C,D}$ is an expanding automaton over the
alphabet $\Sigma := \{1,...,m\} \cup X \cup \{z_1,z_2\}$ such that
$o(c,i)=c_i$ and $o(d,i)=d_i$. We build an asynchronous automaton
$\mathcal{B}$ over the alphabet $\Sigma$ with a state $c'$ such that
$c'c$ is the identity function from $\{1,...,m\}^*$ to
$\{1,...,m\}^*$.  We know that there is a function $c':X^*
\rightarrow \{1,...,m\}^*$ such that $c'c$ is the identity because
$\{c_1,...,c_m\}$ generates a free monoid, so $c$ induces an
injection from $\{1,...,m\}^*$ to $X^*$.

We begin construction of $\mathcal{B}$ by starting with a single
state $c'$, and then attaching a loop based at $c'$ such that the
input letters of the loop read the word $c_1$ when read starting at
$c'$.  The corresponding output word, when read starting at $c'$, we
define to be $(\emptyset)^{|c_1|-1}1$.  In other words, the first
$|c_1|-1$ edges of the loop have the form $x | \emptyset$, and the
last edge of the loop has the form $x|1$. Next, we attach a loop at
$c'$ such that the input letters of the loop when read starting at
$c'$ read the word $c_2$, and the corresponding output word is
$(\emptyset)^{|c_2|-1}2$.  If $c_1$ ad $c_2$ have a non-trivial
common prefix, then the resulting automaton with two loops is not
deterministic.  In this case, we ``fold" the maximum length common
prefixes together, resulting in a deterministic automaton.  We
iteratively continue this process until we can read the words
$c_1,...,c_m$ as input words starting at $c'$, and $c'(c_i)=i$ for
all $i$.  Note that we can do this process since $c_i$ is not a
prefix of $c_j$ for any $i \neq j$.  At this step in the
construction of $\mathcal{B}$, $\mathcal{B}$ is a partial
asynchronous automaton, i.e. given a state of $q$ of $\mathcal{B}$,
the domain of $q$ is not all of $\Sigma^*$. However, we do have
$c'c$ is the identity function $\{1,...,m\}^* \rightarrow
\{1,...,m\}^*$ by construction of $\mathcal{B}$.  In order to make
$\mathcal{B}$ an asynchronous automaton, we add a sink state $i$
such that $t(i,\sigma)=i$ and $o(i,\sigma)=\sigma$ for all $\sigma
\in \Sigma$.  Now for each $q$ a state in $\mathcal{B}$ and $\sigma
\in \Sigma$ such that there there is no edge out of $q$ with
$\sigma$ as an input letter, define $t(q,\sigma)=i$ and
$o(q,\sigma)=z_1$.

Recall that in the proof of Theorem \ref{post corr thm}, in general
we cannot find $w \in \{1,...,m\}^*$ such that $c(w)=d(w)$ because
such a $w$ is a solution to the Post Correspondence Problem.  By
construction of $\mathcal{B}$, any $w \in \{1,...,m\}^*$ such that
$c'd(w)=w=c'c(w)$ is a solution to the prefix code Post
Correspondence Problem. Now $c'd$ is an element of the asynchronous
automaton semigroup generated by the states of $\mathcal{A}_{X,C,D}$
and $\mathcal{B}$. Thus, undecidability of the prefix code Post
Correspondence Problem implies part (1).

In \cite{R}, Ruohonen shows that the that there is no algorithm
which takes as input two lists of words $v_1,...,v_n$ and
$w_1,...,w_n$ over an alphabet $X$ such that $\{v_1,...,v_n\}$ and
$\{w_1,...,w_n\}$ are prefix codes and decides whether there is an
infinite sequence of indices $(i_k)_{k=1}^{\infty}$ such that
$v_{i_1}v_{i_2}...=w_{i_1}w_{i_2}...$.  Thus the same logic and
automata as above implies part (2).\end{proof}

We now give an algorithm which determines whether or not an element
of an expanding automaton semigroup induces an injection
$\mathcal{T}(\Sigma^*) \rightarrow \mathcal{T}(\Sigma^*)$.  Before
we do this, we must recall some basic automata theory which can be
found in more detail in Chapters 1 and 2 of \cite{HU} by Hopcroft
and Ullman. In order to avoid ambiguity of language in this paper,
we will use the phrase ``deterministic finite state automaton" to
denote a 5-tuple $(Q,\Sigma,\delta,q_0,F)$ where $Q$ is a state set,
$\Sigma$ is an alphabet, $\delta$ is a partial function from $Q
\times \Sigma$ to $Q$, $q_0$ is an initial state, and $F$ is a set
of final states. Let ``nondeterministic finite state automaton"
denote a 5-tuple $(Q,\Sigma,\delta,q_0,F)$ where $Q$ is a state set,
$\Sigma$ is an alphabet, $\delta$ is a partial relation from $Q
\times \Sigma$ to $Q$ that is not a partial function, $q_0$ is an
initial state, and $F$ is a set of final states. We view a finite
state automaton (deterministic or nondeterministic) as a finite
directed graph with vertex set $Q$ and an arrow from $q_1$ to $q_2$
labeled by $\sigma$ if and only if $\delta(q_1,\sigma)=q_2$. Given a
finite state automaton $M=(Q,\Sigma,\delta,q_0,F)$, call a directed
edge path $p$ an \emph{acceptable path} in $M$ if $p$ begins at
$q_0$ and ends at a final state. If $M$ is a deterministic or a
nondeterministic finite state automaton, let $\phi_M:
\{\text{acceptable paths in M}\} \rightarrow L(M)$ (where $L(M)$
denotes the language accepted by $M$) denote the map which sends a
path $p$ to the word in $L(M)$ that labels the path $p$.  If $M$ is
deterministic then $\phi_M$ is injective.  We show in the following
lemma that we can decide if $\phi_M$ is injective for a
nondeterministic finite state automaton $M$.

\begin{lem}\label{injectivity lemma}
Let $M$ be a nondeterministic finite state automaton.  Then there is
algorithm that decides whether or not $\phi_M$ is injective.
\end{lem}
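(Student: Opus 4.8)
The plan is to recognize that $\phi_M$ is injective precisely when $M$ is \emph{unambiguous} --- that is, no word in $L(M)$ is spelled by two distinct acceptable paths --- and to decide this property by a product construction followed by a reachability analysis. First I would build the product automaton $N = M \times M$ on the state set $Q \times Q$, with initial state $(q_0, q_0)$, final-state set $F \times F$, and an edge from $(p,p')$ to $(r,r')$ labelled $\sigma$ exactly when both $p \xrightarrow{\sigma} r$ and $p' \xrightarrow{\sigma} r'$ are edges of $M$. By construction $N$ is again a nondeterministic finite state automaton, it is computed effectively from $M$, and its acceptable paths correspond bijectively to ordered pairs of acceptable paths in $M$ that label a common word $w \in L(M)$: the two coordinates of $N$ track the two paths letter by letter.

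The key fact I would then establish is the equivalence: $\phi_M$ fails to be injective if and only if $N$ has an \emph{off-diagonal} state $(p,p')$ with $p \neq p'$ that lies on some acceptable path of $N$ (equivalently, is reachable from $(q_0,q_0)$ and can reach a state in $F \times F$). For the forward direction, if $\phi_M$ is not injective then some word $w$ labels two distinct acceptable paths in $M$; running them together gives an acceptable path in $N$, and since the paths are distinct they must occupy different states after reading some prefix of $w$, which produces the required off-diagonal state. For the reverse direction, an off-diagonal state on an acceptable path of $N$ unfolds into two acceptable paths of $M$ for the same word that differ in their state sequence, hence are genuinely distinct, so $\phi_M$ identifies two paths. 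The subtle point here, which I would state explicitly, is the converse reading of ``distinct'': an acceptable path of $N$ that stays on the diagonal throughout forces the two component paths to visit the same states with the same labels, and because in the graph representation each triple (source, label, target) determines at most one edge, identical state-and-label sequences yield identical edge sequences, i.e. the \emph{same} path. Thus diagonal-only acceptable paths never witness non-injectivity, and it is exactly the off-diagonal states that matter.

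With the equivalence in hand, the algorithm is routine: construct $N$, compute the set of states reachable from $(q_0,q_0)$ by a forward graph search and the set co-reachable from $F \times F$ by a backward graph search, and check whether any off-diagonal state $(p,p')$ with $p \neq p'$ lies in both sets. If such a state exists, report that $\phi_M$ is not injective; otherwise report that it is injective. All of these are finite graph computations on $N$, whose size is $|Q|^2$, so the procedure terminates. I expect the main obstacle to be the careful proof of the equivalence rather than the algorithm itself, and in particular the diagonal-collapse argument above, which is what guarantees that injectivity of $\phi_M$ is detected by off-diagonal reachability and not spuriously destroyed by the automaton revisiting the diagonal.
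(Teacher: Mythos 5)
Your proposal is correct, but it takes a genuinely different route from the paper. The paper runs the Hopcroft--Ullman subset construction to build a deterministic automaton $M'$ on the power set of $Q$, and then decides non-injectivity by inspecting the accessible edges of $M'$ for a transition out of a subset-state in which two distinct states (or one state via two distinct edges) step into $F$ under the same letter; you instead form the product automaton $N = M \times M$ and test whether some off-diagonal state is simultaneously reachable from $(q_0,q_0)$ and co-reachable from $F \times F$. This is the classical ambiguity test for nondeterministic automata, and it buys you two things. First, the state blow-up is $|Q|^2$ rather than $2^{|Q|}$. Second, and more importantly, your characterization is cleanly robust to pairs of distinct acceptable paths that diverge and later \emph{re-converge} to a common state before acceptance: such a pair still passes through an off-diagonal state of $N$, and your explicit diagonal-collapse argument (identical state-and-label sequences force identical edge sequences, since an edge is determined by its source, label, and target) correctly shows that this is the only way distinctness can manifest. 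The paper's criterion, by contrast, only examines the transition immediately preceding acceptance, which detects a divergence at the last step but is less transparent about re-converging paths; it suffices in the paper's application (Proposition \ref{decidability of injectivity}), where every state of $M$ is final so that any divergence is already witnessed by an accepting transition, but your version is the safer general statement. Both arguments reduce to finite, effective graph computations, so both yield the claimed algorithm.
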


\begin{proof}
Let $M=(Q,\Sigma,\delta,q_o,F)$ be a nondeterministic finite state
automaton.  We build a deterministic finite state automaton
$M'=(Q',\Sigma,\delta',q_0',F')$ from $M$ using a construction of
Hopcroft and Ullman from chapter 1 of \cite{HU} as follows.  The
state set $Q'$ is the power set of $Q$, $q_0'=\{q_0\}$, and $F'=\{S
\in Q' \ | \ \text{there exists} \ q \in S \ \text{such that} \ q
\in F\}$.  Lastly,
$\delta'(\{q_1,...,q_k\},\sigma)=\{\delta(q_1,\sigma),...,\delta(q_k,\sigma)\}$.
Then, by construction of $M'$, $\phi_M$ is not injective if and only
if in $M'$ there is an edge $\{r_1,...,r_t\} \xrightarrow{\sigma}
\{s_1,...,s_v\}$ accessible from $\{q_0\}$ such that there exist
distinct $r_{i_1},r_{i_2}$ with $\delta(r_{i_j},\sigma) \in F$ or
there is an $r_j$ such that in $M$ there are two edges coming out of
$r_j$ labeled by $\sigma$ whose terminal vertices are final states.
\end{proof}

\begin{prop}\label{decidability of injectivity}
Let $\mathcal{A}=(Q,\Sigma,t,o)$ be an expanding automaton.  Given
$q \in Q$, there is an algorithm to decide if $q: \Sigma^*
\rightarrow \Sigma^*$ is injective.\end{prop}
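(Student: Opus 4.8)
The plan is to decide injectivity of $q$ by recasting the question as a question about a nondeterministic finite state automaton and then invoking Lemma \ref{injectivity lemma}. The key observation is that $q$ fails to be injective on $\Sigma^*$ precisely when there exist two distinct words $u,v \in \Sigma^*$ with $q(u)=q(v)$. Since $q$ is prefix-preserving and length-expanding, I would like to encode all the input-output behavior of $q$ (and its sections, which are again states of $\mathcal{A}$ by the discussion following Equation (\ref{composition formula})) into a single finite automaton whose accepted language records output words, so that two distinct accepting paths producing the same output word correspond exactly to a pair $u \neq v$ with $q(u)=q(v)$.

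First I would build a nondeterministic finite state automaton $M$ from $\mathcal{A}$ and the chosen state $q$. The state set of $M$ will be (a subset of) $Q$, with $q$ as the initial state, every state declared final, and transitions that read \emph{output letters} rather than input letters: for each edge $q_1 \xrightarrow{\sigma|w} q_2$ of $\mathcal{A}$, I install a path in $M$ spelling out $w$ letter by letter, beginning at $q_1$ and ending at $q_2$ (introducing auxiliary intermediate states as needed, one per internal letter of $w$). Then an acceptable path in $M$ labeled by a word $y \in \Sigma^*$ corresponds to a choice of input word $u$ with $q(u)=y$, and the map $\phi_M$ sending a path to its output label is injective if and only if no two distinct input words have the same image under $q$. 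The distinct paths of $M$ are in bijection with input words $u$ because $\mathcal{A}$ is deterministic as an input-reading machine; the nondeterminism in $M$ arises solely from different inputs producing a common output prefix.

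Having made this translation, I would conclude: $q$ is injective on $\Sigma^*$ if and only if $\phi_M$ is injective, and the latter is decidable by Lemma \ref{injectivity lemma}. This gives the desired algorithm. I expect the main obstacle to be verifying carefully that distinct acceptable paths in $M$ correspond to distinct input words $u \in \Sigma^*$ (so that $\phi_M$ injective really is equivalent to $q$ injective, not to some weaker statement), and in particular handling the bookkeeping of the auxiliary states inserted to spell out multi-letter outputs $w \in \Sigma^+$ so that path-identity in $M$ matches word-identity in $\Sigma^*$. One should also check the base case $q(\emptyset)=\emptyset$ does not spuriously create a collision, and confirm that since $\mathcal{A}$ is finite, $M$ has finitely many states and the construction is effective. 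Once this correspondence is nailed down, the reduction to Lemma \ref{injectivity lemma} closes the argument.
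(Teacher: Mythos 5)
Your proposal is correct and follows essentially the same route as the paper: build a nondeterministic finite state automaton from $\mathcal{A}$ by discarding input labels and expanding each multi-letter output $w\in\Sigma^+$ into a path of single-letter edges, take $q$ as the initial state with all states final, observe that acceptable paths correspond bijectively to input words (by input-determinism of $\mathcal{A}$), and reduce injectivity of $q$ to injectivity of $\phi_M$, which is decidable by Lemma \ref{injectivity lemma}. The verification points you flag (the path/input-word bijection, the auxiliary states, and the harmlessness of $q(\emptyset)=\emptyset$ given that outputs lie in $\Sigma^+$) are exactly the right ones and are handled the same way in the paper's argument.
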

\begin{proof}
Fix $q \in Q$.  First we build a finite state automaton
$M=(Q',\Sigma,\delta,q_0,F)$ from $\mathcal{A}$.  Begin with state
set $Q'$ in bijection with $Q$.  Whenever $q_1 \xrightarrow{\sigma |
w} q_2$ in $\mathcal{A}$ with $w=v_1...v_k$ where $v_i \in \Sigma$,
add enough states in $M$ so that there is a path labeled by
$v_1...v_k$ from $q_1'$ to $q_2'$. Intuitively, $M$ is the finite
state automaton we get from $\mathcal{A}$ by dropping the inputs off
of the edges in $\mathcal{A}$, then making each edge into a path so
that every edge in $M$ is labeled by an element of $\Sigma$.  Let
$F=Q'$ and $q_0=q'$.  Note that $q$ is not injective if and only if
there exist distinct paths in $\mathcal{A}$ such that the outputs
read along each path give the same element of $\Sigma^*$.  Now $M$
is constructed so that for each $w \in \text{range}(q)$ there exists
an acceptable path $p$ in $M$ such that $\phi_{M}(p)=w$, and given
an acceptable path $p'$ in $M$ we have $\phi_{M}(p') \in
\text{range}(q)$. Furthermore, each acceptable path in $M$
corresponds to an input path in $\mathcal{A}$.  Thus $q$ is not
injective if and only if there exist two distinct paths $p_1$ and
$p_2$ in $M$ such that $\phi_{M}(p_1)=\phi_{M}(p_2)$. By lemma
\ref{injectivity lemma}, there is an algorithm to decide this
property of $M$.\end{proof}

The set of semigroups that can be realized by expanding automata
such that the states induce injective functions is very restricted.
 Let $S$ be an expanding automaton semigroup with corresponding
automaton $\mathcal{A}=(Q,\Sigma,t,o)$ such that each state $q \in
Q$ induces an injection $\mathcal{T}(\Sigma^*) \rightarrow
\mathcal{T}(\Sigma^*)$.  Then any element of $Q^*$ also induces an
injection $\mathcal{T}(\Sigma^*) \rightarrow \mathcal{T}(\Sigma^*)$.
Let $e \in S$, and suppose that $e$ is idempotent.  Since $e$ is
idempotent, $e$ fixes range$(e)$. If $w \in \Sigma^*$ is such that
$e(w)\neq w$, then $w$ and $e(w)$ are both preimages of $e(w)$ under
$e$.  Since $e$ induces an injection, we have that $e$ is the
identity function on $\mathcal{T}(\Sigma^*)$.  Let $e_{\Sigma}$
denote the identity function on $\mathcal{T}(\Sigma^*)$. Then $S$
can contain at most one idempotent, namely $e_{\Sigma}$.  If
$e_{\Sigma} \in S$, then Proposition \ref{unique maximal subgroup}
implies that the group of units of $S$ is self-similar.  Suppose
that $e_{\Sigma} \not \in S$. Then $S$ contains no idempotents and
hence any $s \in S$ is non-periodic.

Suppose that there is an $s \in S$ such that there exists a word $w
\in \Sigma^*$ with $|w| < |s(w)|$.  Then, because each element of
$S$ is injective and elements of $S$ cannot shorten word length when
acting on $\Sigma^*$, there cannot be an element $s' \in S$ such
that $ss's=s$.  A semigroup $T$ is said to be \emph{von Neumann
regular} if for each $t \in T$ there is a $t' \in T$ with $tt't=t$.
Then $S$ is not von Neumann regular.  Thus we have shown the
following.

\begin{prop}
Let $S$ be an expanding automaton semigroup over an expanding
automaton $\mathcal{A}=(Q,\Sigma,t,o)$ such that each $q$ induces an
injective function $\mathcal{T}(\Sigma^*) \rightarrow
\mathcal{T}(\Sigma^*)$. Then \begin{enumerate}
\item The group of units of $S$ is self-similar.
\item $S$ is von Neumann regular if and only if $\mathcal{A}$ is an invertible automaton and $S$ is a group.
\item If $e \in S$ is idempotent then $e = e_{\Sigma}$.
\item If $e_{\Sigma} \not \in S$, then $S$ does not contain any
periodic elements.
\end{enumerate}
\end{prop}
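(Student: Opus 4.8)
The plan is to build everything on a single structural observation about idempotents, which is essentially part (3), and then read off the remaining three statements. First I would prove part (3). If $e \in S$ is idempotent then $e$ fixes $\text{range}(e)$, since for $y = e(x)$ we have $e(y) = e^2(x) = e(x) = y$. Now if there were a $w \in \Sigma^*$ with $e(w) \neq w$, then $e(w)$ and $w$ would be two distinct preimages of $e(w)$ under $e$ (using that $e$ fixes $e(w) \in \text{range}(e)$), contradicting the injectivity of $e$. Hence $e(w) = w$ for all $w$, i.e. $e = e_{\Sigma}$. The immediate consequence, which drives the rest, is that $S$ has at most one idempotent.

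Parts (1) and (4) then follow quickly. For (4), if $s \in S$ were periodic then the finite cyclic subsemigroup $\langle s \rangle$ would contain an idempotent power $s^k$; by part (3) this forces $s^k = e_{\Sigma}$, so $e_{\Sigma} \in S$, contrary to hypothesis. For (1), every subgroup of $S$ contains its identity as an idempotent, so by part (3) every subgroup of $S$ has identity $e_{\Sigma}$; thus $S$ has at most one maximal subgroup (and none at all if $e_{\Sigma} \notin S$, in which case the group of units is trivial). When $e_{\Sigma} \in S$ it acts as the identity tree endomorphism, so $S$ is a monoid with identity $e_{\Sigma}$, this unique maximal subgroup is the group of units, and Proposition \ref{unique maximal subgroup} shows it is self-similar.

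The substance of the proposition is part (2), and this is where I expect the main difficulty. The backward direction is immediate: a group is von Neumann regular, taking $t' = t^{-1}$. For the forward direction I would extract two independent consequences of von Neumann regularity. First, for any $s \in S$ the element $ss'$ is idempotent, so $S$ contains an idempotent; combined with part (3) this gives that $e_{\Sigma}$ is the unique idempotent of $S$. A von Neumann regular semigroup with a single idempotent is a group: from $ss's = s$ one checks that $ss'$ and $s's$ are idempotents, hence both equal $e_{\Sigma}$, so $e_{\Sigma}$ is a two-sided identity and $s'$ a two-sided inverse of $s$. Hence $S$ is a group. Second, I would rule out strict length expansion. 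If some $s \in S$ satisfied $|w| < |s(w)|$ for a word $w$, then $ss's = s$ together with injectivity of $s$ would force $s'(s(w)) = w$, impossible since $s'$ cannot shorten words (every element of an expanding automaton semigroup satisfies $|v| \le |s'(v)|$) while $|s(w)| > |w|$. Thus every $s \in S$, in particular every generator $q$, is length-preserving; being also injective on $\Sigma = \Sigma^1$, each $o_q$ is a permutation of $\Sigma$, so $\mathcal{A}$ is an invertible automaton.

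The main obstacle is organizing part (2): von Neumann regularity must be made to yield two separate conclusions — that $S$ is a group (through the unique-idempotent argument) and that $\mathcal{A}$ is invertible (through the length argument) — and I would take care that the length argument uses only that elements of $S$ are injective and length-nondecreasing, both of which hold under the standing hypotheses.
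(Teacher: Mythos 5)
Your proof is correct and follows essentially the same route as the paper: part (3) from injectivity of idempotents, part (1) via Proposition \ref{unique maximal subgroup}, part (4) from the absence of idempotents, and part (2) from the observation that $ss's=s$ is incompatible with strict length expansion. You actually supply more detail than the paper on part (2) — the paper only records that strict expansion obstructs von Neumann regularity, while you also carry out the unique-idempotent argument showing a regular $S$ is a group and that length-preservation of the states makes $\mathcal{A}$ invertible — so your write-up fills in steps the paper leaves implicit.
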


\section{Algebraic Properties}

\subsection{Residual Finiteness and Periodicity}
In this section we show that expanding automaton semigroups are
residually finite and that the periodicity structure of these
semigroups is restricted.

\begin{prop}\label{EAS res finite}
Expanding automaton semigroups are residually finite.\end{prop}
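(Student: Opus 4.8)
The plan is to produce, for each pair of distinct elements, an explicit finite quotient by letting elements of $S$ act on the finite set of short words via a \emph{truncated} action. Fix an expanding automaton $\mathcal{A}=(Q,\Sigma,t,o)$ and write $S=S(\mathcal{A})$. For a word $x\in\Sigma^*$ and $N\in\mathbb{N}$, let $x|_N$ denote the prefix of $x$ of length $\min(|x|,N)$, so that $x\mapsto x|_N$ is a retraction of $\Sigma^*$ onto the finite set $\Sigma^{\le N}$ of words of length at most $N$. For $s\in S$ define $\bar s_N\colon\Sigma^{\le N}\to\Sigma^{\le N}$ by $\bar s_N(v)=\bigl(s(v)\bigr)|_N$. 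Each $\bar s_N$ is a self-map of the finite set $\Sigma^{\le N}$, so it lies in the full transformation monoid $T(\Sigma^{\le N})$, which is finite. The heart of the argument is to show that $s\mapsto\bar s_N$ is a semigroup homomorphism $S\to T(\Sigma^{\le N})$; once this is established, residual finiteness follows by a short separation argument.

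The main obstacle is precisely the homomorphism property, because elements of $S$ are \emph{not} level-preserving: for synchronous automata the analogous map is just the level-$N$ action and compatibility with composition is immediate, but here $t(v)$ may be much longer than $v$, so a priori $\overline{(st)}_N$ could depend on the behaviour of $s$ on words longer than $N$. The claim reduces to verifying, for all $s,t\in S$ and all $v\in\Sigma^{\le N}$, the identity
\[\bigl(s(t(v))\bigr)|_N=\Bigl(s\bigl((t(v))|_N\bigr)\Bigr)|_N.\]
Here both the prefix-preserving and the length-expanding properties of elements of $S$ (recorded in Section 2) are used in an essential way. Writing $x=t(v)$ and $x'=x|_N$, if $|x|\le N$ then $x'=x$ and the identity is trivial; if $|x|>N$ then $x'$ is a prefix of $x$ of length exactly $N$, so prefix-preservation makes $s(x')$ a prefix of $s(x)$, while length-expansion gives $|s(x')|\ge|x'|=N$, whence $s(x')$ and $s(x)$ share the same prefix of length $N$ and the two sides agree. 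I expect this case analysis to be the only genuinely delicate computation; it is exactly where the argument would fail for general asynchronous automata, consistent with the fact that asynchronous automaton semigroups need not be residually finite.

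Granting the homomorphism property, the separation step is routine. If $s_1\neq s_2$ in $S$, then, viewing them as functions on $\Sigma^*$, there is some $w\in\Sigma^*$ with $s_1(w)\neq s_2(w)$. Choosing $N\ge\max\{|w|,\,|s_1(w)|,\,|s_2(w)|\}$ forces $w\in\Sigma^{\le N}$ and $\bar{s_1}_N(w)=s_1(w)\neq s_2(w)=\bar{s_2}_N(w)$, so $\bar{s_1}_N\neq\bar{s_2}_N$ in the finite semigroup $T(\Sigma^{\le N})$. Thus the homomorphism $s\mapsto\bar s_N$ into a finite semigroup separates $s_1$ and $s_2$, and $S$ is residually finite. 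I would remark that when $\mathcal{A}$ is synchronous this recovers the standard level-action proof (Proposition 3.2 of \cite{C}), since then $s(v)$ has length $|v|\le N$ and $\bar s_N$ is literally the action on words of length at most $N$.
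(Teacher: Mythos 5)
Your proposal is correct and takes essentially the same approach as the paper: both separate $s_1 \neq s_2$ by mapping $S$ homomorphically into the full transformation monoid on a finite set of words of bounded length, with the length-expanding property of elements of $S$ making the map compatible with composition. The only difference is the treatment of overflow --- the paper collapses every word that lands outside the finite set to a single absorbing symbol $\$$ (which uses only length-expansion), while you truncate to the length-$N$ prefix (which additionally invokes prefix-preservation); both variants work.
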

\begin{proof}
Let $S$ be an expanding automaton semigroup over the alphabet
$\Sigma$ and let $a,b \in S$ with $a \neq b$. For each $m \in
\mathbb{N}$, let $L(m)=\{w \in \Sigma^* \ : \ |w|=m\}$, i.e. $L(m)$
is the $m$th level of the tree $\Sigma^*$. Since $a$ and $b$ are
distinct, there is $n \in \mathbb{N}$ such that $a$ and $b$ act
differently on $L(n)$.  Let
\[n'=\text{max}\{|a(w)|,|b(w)| \ : \ w \in L(n)\} \] and let
$\mathcal{L}=\left(\bigcup_{i=1}^{n'}L(i)\right) \cup \{\$\}$.
Finally, let $T(\mathcal{L})$ denote the semigroup of
transformations $\mathcal{L} \rightarrow \mathcal{L}$.  Since
$\mathcal{L}$ is finite, $T(\mathcal{L})$ is a finite semigroup.
 Define a homomorphism $\rho:S \rightarrow T(\mathcal{L})$ by
$\rho(s)=f$
where $f(\$)=\$$ and \[f(x)=\begin{cases} s(x) & s(x) \in \mathcal{L} \\
\$ & s(x) \not \in \mathcal{L} \end{cases} \] Since $a$ and $b$ act
differently on $L(n)$, construction of $\rho$ ensures that $\rho(a)$
and $\rho(b)$ are distinct in $T(\mathcal{L})$.\end{proof}

Let $G$ be an automaton group over an alphabet $\Sigma$ and let
$P_{\Sigma}$ denote the set of prime numbers that divide
$|\Sigma|!$. If $g \in G$ has finite order, then the order of $g$
must have only primes from $P_{\Sigma}$ in its prime factorization.
One can see this by considering $g$ as a level-preserving
automorphism on a tree of degree $|\Sigma|$, and thus the
cardinality of any orbit under the action of $g$ must have only
prime numbers dividing $|\Sigma|!$ in its prime factorization.  We
show an analogous proposition for the periodicity structure of
expanding automaton semigroups.  First, we define a \emph{partial
invertible automaton} to be a quadruple $(Q,\Sigma,t,o)$ where $t$
is a partial function from $Q \times \Sigma$ to $Q$ and $o$ is a
partial function from $Q \times \Sigma$ to $\Sigma$ such that the
restricted partial function $o_q$  from $\{q\} \times \Sigma$ to
$\Sigma$ is a partial permutation of $\Sigma$.  It is
straightforward to show that any partial invertible automaton can be
``completed" to an invertible automaton, i.e. given a partial
invertible automaton $\mathcal{B}$ there is an invertible automaton
$\mathcal{A}$ (not necessarily unique) such that $\mathcal{B}$
embeds (via a labeled graph homomorphism) into $\mathcal{A}$.

\begin{prop}\label{restriction on periodicity}
Let $S$ be an expanding automaton semigroup over an alphabet
$\Sigma$, and let $P_{\Sigma}$ be as above.  If $s \in S$ is
periodic with $s^m=s^n$, $m<n$, and $s,...,s^{n-1}$ distinct, then
$n-m$ has only primes from $P_{\Sigma}$ in its prime factorization.
\end{prop}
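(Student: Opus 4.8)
The plan is to reduce to the level-preserving (automaton-group) situation, where the analogous fact recorded just above the proposition applies, by observing that although $s$ is only length-expanding, it becomes length-preserving after we restrict to the range of $s^m$. Set $d=n-m$. Since $s$ induces a length-expanding endomorphism, $|s^k(w)|$ is non-decreasing in $k$ for every $w\in\Sigma^*$, so the hypothesis $s^m=s^n$ forces $|s^m(w)|=|s^{m+1}(w)|=\cdots=|s^n(w)|$. Writing $R=\mathrm{range}(s^m)$ and noting that $R=\mathrm{range}(s^k)$ for all $k\ge m$ (because $\mathrm{range}(s^m)\supseteq\cdots\supseteq\mathrm{range}(s^{m+d})=\mathrm{range}(s^m)$), this says that $s$ is length-preserving on $R$, that $s(R)=R$, and that $s^d$ fixes $R$ pointwise, so $s|_R$ is a bijection. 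A short argument using the hypothesis that $s,\dots,s^{n-1}$ are distinct (so $d$ is the exact period of the sequence $(s^k)$) then shows that the order of the bijection $s|_R$ equals $d$.

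Next I would upgrade ``length-preserving on $R$'' to ``length-preserving on the prefix-closure of $R$.'' The key is a letter-by-letter computation: if $w=a_1\cdots a_\ell\in R$, then $s(w)$ is the concatenation of the $\ell$ section-outputs $o(s,a_1),o(s_{a_1},a_2),\dots$, each of length $\ge 1$; since $|s(w)|=\ell$, each of these outputs must be a single letter. Consequently $s$ sends every prefix $a_1\cdots a_i$ of $w$ to the length-$i$ prefix of $s(w)$. Letting $\hat R$ denote the prefix-closed set of all prefixes of elements of $R$, this shows that $s$ maps $\hat R$ into $\hat R$, preserves length and the parent--child relation, and is a bijection of $\hat R$ with inverse $s^{d-1}|_{\hat R}$. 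In other words, $s|_{\hat R}$ is a finite-order rooted-tree automorphism of the subtree $\hat R$, and its order is exactly $d$ since it restricts to $s|_R$ on $R\subseteq\hat R$.

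Finally I would invoke the level-preserving case. The subtree $\hat R$ sits inside the $|\Sigma|$-ary tree, so each vertex has at most $|\Sigma|$ children; truncating at depth $\ell$ yields a finite rooted tree $\hat R_\ell$ of branching $\le|\Sigma|$, on whose automorphism group $s$ induces a permutation $\pi_\ell$. The automorphism group of any such finite tree is built from iterated wreath products of subgroups of $S_{|\Sigma|}$, so its order, and hence the order of $\pi_\ell$, has only primes from $P_\Sigma$ in its factorization; this is precisely the fact recorded above for automaton groups, and one may make the reduction literal by reading the length-preserving action of $s$ on $\hat R$ off as a partial invertible automaton and completing it to an invertible automaton. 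Since $s^d$ fixes $\hat R$ while $s|_{\hat R}$ has order $d$, we get $d=\operatorname{lcm}_\ell\mathrm{ord}(\pi_\ell)$, and therefore every prime dividing $d=n-m$ lies in $P_\Sigma$.

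I expect the main obstacle to be the second paragraph: carefully justifying that $s$ is length-preserving on the prefix-closure $\hat R$ and that $\hat R$ is $s$-invariant with $s|_{\hat R}$ a bijection of order exactly $d$. This is the step that combines length-expansion, prefix-preservation, the equality $s^m=s^n$, and the distinctness hypothesis all at once, and it is what converts the expanding action into a genuine tree automorphism to which the wreath-product count can be applied.
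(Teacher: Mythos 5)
Your proof is correct and follows essentially the same route as the paper's: both restrict $s$ to the range of $s^m$, observe that the action there is length-preserving and bijective of order exactly $n-m$, and reduce to the level-preserving, invertible situation (partial invertible automaton completed to an invertible one / iterated wreath products of subgroups of $S_{|\Sigma|}$), where orbit sizes can only involve primes dividing $|\Sigma|!$. You make explicit several steps the paper leaves implicit --- that each section output along a word of $\mathrm{range}(s^m)$ is a single letter, that the prefix-closure is $s$-invariant, and that $n-m$ is the least common multiple of the cycle lengths --- but the underlying argument is the same.
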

\begin{proof}
Let $\mathcal{A}=(Q,\Sigma,t,o)$ be an expanding automaton with
$S=S(\mathcal{A})$.  Suppose $s \in S$ is periodic with $s^m=s^n$,
$m<n$, and $s,...,s^{n-1}$ distinct. Fix $w \in s^m(\Sigma^*)$. Then
$R_w:=\{s^k(w) \ | \ k \geq m\}$ is a finite set, and the
cardinality of $R_w$ divides $n-m$.  Note that for any $w' \in R_w$,
$s$ acts like a cycle on $w'$ as $s^m(w')=s^n(w')$.  Furthermore, if
$v,v' \in R_w$ then $|v|=|v'|$ because $s$ cannot shorten word
length.  Thus the paths in $\mathcal{A}$ corresponding to the input
words $s^m(w),...,s^{n-1}(w)$ form a partial invertible subautomaton
of $\mathcal{A}$.  Denote this partial invertible subautomaton by
$\beta_w$.  Consider the partial invertible subautomaton  $\beta$ of
$\mathcal{A}$ given by $\beta=\displaystyle\cup_{w \in \Sigma^*}
(\beta_w)$.  Complete $\beta$ to an invertible automaton $\beta'$.
Then $R_w$ is an orbit under the action of an element of an
automaton group for all $w \in \Sigma^*$, and the result
follows.\end{proof}

\subsection{Subgroups}

Let $\mathcal{A}=(Q,\Sigma,t,o)$ be a synchronous invertible
automaton.  As in \cite{N}, construct an automaton
$\mathcal{A}^{-1}=(Q^{-1},\Sigma,t^{-1},o^{-1})$ where $Q^{-1}$ is a
set in bijection with $Q$ under the mapping $q \rightarrow q^{-1}$,
$t^{-1}(q_1^{-1},\sigma)=q_2^{-1}$ if and only if
$t(q_1,\sigma)=q_2$, and $o(q^{-1},\sigma_1)=\sigma_2$ if and only
if $o(q,\sigma_2)=\sigma_1$.  Then $qq^{-1}=q^{-1}q$ is the identity
function $\Sigma^* \rightarrow \Sigma^*$, and so $\mathcal{A}^{-1}$
is called the \emph{inverse automaton for $\mathcal{A}$} (we always
denote the inverse automaton for $\mathcal{A}$ by
$\mathcal{A}^{-1}$).

\begin{prop}\label{EAS gp iff aut gp}

A group $G$ is an automaton group (respectively self-similar group)
if and only if $G$ is an expanding automaton semigroup (respectively
expanding self-similar semigroup).\end{prop}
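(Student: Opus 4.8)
The plan is to prove the two directions separately. The forward implication is immediate: every invertible synchronous automaton is in particular an expanding automaton (its output function maps into $\Sigma \subseteq \Sigma^+$), so an automaton group is an expanding automaton semigroup, and allowing infinitely many states gives the self-similar/expanding self-similar version. The substance is the converse, and here I would follow the idea of Proposition 3.1 of \cite{C}. Suppose $G = S(\mathcal{A})$ is a group, where $\mathcal{A} = (Q,\Sigma,t,o)$ is expanding, and let $e \in G$ be the identity. The first step is the length identity $|s(w)| = |e(w)|$ for all $s \in G$ and $w \in \Sigma^*$: since $s$ is length-expanding and $se = s$, applying $s$ to $e(w)$ gives $|e(w)| \le |s(e(w))| = |s(w)|$, while applying the (length-expanding) element $s^{-1}$ to $s(w)$ gives $|s(w)| \le |s^{-1}s(w)| = |e(w)|$. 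In particular $G$ preserves lengths on $\mathrm{range}(e)$, since $e$ fixes $\mathrm{range}(e)$.

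Let $F = \{\sigma \in \Sigma : e(\sigma) = \sigma\}$. The main step---and the point where the group hypothesis is essential---is to show that $\mathrm{range}(e) = F^*$, so that $G$ acts on an honest regular rooted tree $\mathcal{T}(F^*)$. For $x \in \mathrm{range}(e)$ write $x = \sigma_1\cdots\sigma_\ell$; prefix-preservation makes $e(\sigma_1),\,e(\sigma_1\sigma_2),\dots$ a chain of prefixes of $e(x)=x$ whose lengths increase by $|e_{\sigma_1\cdots\sigma_{j-1}}(\sigma_j)| \ge 1$ at each of the $\ell$ steps, and since the total increase is $\ell$ each step contributes exactly $1$; in particular $e(\sigma_1) = \sigma_1$, so $\sigma_1 \in F$. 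Next, specializing Equation (\ref{composition formula}) to $f=g=e$ gives $e_\sigma = e_{e(\sigma)}e_\sigma$ for every $\sigma$, so for $\sigma \in F$ the section $e_\sigma$ is idempotent; as $e_\sigma$ is again an element of $S(\mathcal{A}) = G$ and a group contains a unique idempotent, $e_\sigma = e$. Feeding this back into $e(\sigma_1 y) = e(\sigma_1)e_{\sigma_1}(y) = \sigma_1 e(y)$ and inducting on $|x|$ shows $x \in F^*$; the reverse inclusion $F^* \subseteq \mathrm{range}(e)$ is a direct computation.

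With $\mathrm{range}(e) = F^*$ in hand, $G$ acts on $\mathcal{T}(F^*)$ by length-preserving bijections (the bijectivity comes from $ss^{-1} = e = s^{-1}s$ acting as the identity on $F^*$), and this action is faithful because $s = se$ shows $s$ is determined by its restriction to $\mathrm{range}(e)$. Each state $q \in Q$ thus restricts to a length-preserving bijection of $F^*$ whose sections, being sections of an element of $G$, again lie in $G$ and restrict to $F^*$; since there are only finitely many such sections this yields a finite invertible synchronous automaton over the alphabet $F$ (the output at each state is a permutation of $F$ precisely because the restriction is a level-preserving bijection), whose generated group is $G$, and one may form its inverse automaton $\mathcal{A}^{-1}$ as above. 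Hence $G$ is an automaton group. The self-similar statement follows by the same argument, dropping the finiteness of $Q$ and allowing infinitely many sections. I expect the crux to be the identity $\mathrm{range}(e) = F^*$: the expanding setting a priori allows genuinely length-increasing generators, and it is only the interplay of the length identity, idempotency of the sections $e_\sigma$, and uniqueness of the idempotent in the group $G$ that collapses the action onto a genuine subtree over the fixed letters.
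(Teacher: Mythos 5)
Your converse direction is correct and is in substance the paper's own argument: both hinge on showing that every $g\in G$ acts bijectively and in a length-preserving fashion on $\mathrm{range}(e)$, that the restriction map $g\mapsto g|_{\mathrm{range}(e)}$ is faithful (because $ge=g$), and that one can then discard the non-invertible part of the action to obtain an invertible synchronous automaton generating $G$. Your treatment is in fact a bit sharper at one point: you prove the clean identity $\mathrm{range}(e)=F^*$ with $F=\{\sigma\in\Sigma : e(\sigma)=\sigma\}$, using the length count from prefix-preservation together with idempotency of the sections $e_\sigma$ and uniqueness of the idempotent in the group $G$, and you then build the invertible automaton directly over the alphabet $F$; the paper keeps the alphabet $\Sigma$ and routes the letters outside $e(\Sigma)$ into an identity sink state. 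The two realizations are interchangeable, and your identification of $\mathrm{range}(e)$ with a genuine subtree is a nice way to make the ``collapse'' precise.

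The genuine gap is in the forward direction, which is not immediate as you assert. If $\mathcal{A}$ is an invertible synchronous automaton, then viewing $\mathcal{A}$ as an expanding automaton produces the \emph{semigroup} $S(\mathcal{A})$ generated by its states, and this is in general a proper subsemigroup of the \emph{group} generated by the states: for the binary adding machine $a=(1,a)[1,0]$ over $\{0,1\}$, the group generated by the states is $\mathbb{Z}$, while the semigroup generated by the states consists only of the nonnegative powers of $a$ and is the free monoid of rank $1$ (the paper itself uses exactly this fact in the proof of Theorem \ref{trace monoids aut sgps}). So ``$G$ is an automaton group'' does not directly yield ``$G$ is an expanding automaton semigroup'' from the same automaton; one must first enlarge the automaton to $\mathcal{B}=\mathcal{A}\cup\mathcal{A}^{-1}$, so that every element of $G$ is a positive word in the states, and only then observe that $\mathcal{B}$ is synchronous and hence expanding. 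This is precisely the step the paper takes, and it is also where the inverse automaton belongs: your invocation of $\mathcal{A}^{-1}$ at the end of the converse direction is unnecessary there, since once the semigroup generated by the states is the group $G$ the group they generate is also $G$ --- which suggests the inverse-automaton step has migrated to the wrong half of the proof.
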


\begin{proof}
Let $G$ be an automaton group corresponding to the automaton
$\mathcal{A}:= (Q,\Sigma,t,o)$.  Since $G$ is an automaton group,
$\mathcal{A}$ is invertible and synchronous. Construct a new
automaton $\mathcal{B}=\mathcal{A} \cup \mathcal{A}^{-1}$.  Then
$S(\mathcal{B})=G$ and $\mathcal{B}$ is an expanding automaton. Thus
$G$ is an expanding automaton semigroup.

Conversely, let the group $G$ be an expanding automaton semigroup
corresponding to the expanding automaton $\mathcal{A}=
(Q,\Sigma,t,o)$.  Let $e$ be the identity of $G$ and $g \in G$. Then
\[e(\Sigma^*)=g(g^{-1}(\Sigma^*)) \subseteq g(\Sigma^*)\] and
\[g(\Sigma^*)=e(g(\Sigma^*)) \subseteq e(\Sigma^*) \] Hence
$e(\Sigma^*)=g(\Sigma^*)$.  Now $e$ is idempotent and thus fixes
$e(\Sigma^*)$, so (as in the proof of \ref{left invertible iff right
invertible}) $g$ is bijective and length-preserving on
$g(\Sigma^*)=e(\Sigma^*)$. Thus $G$ is isomorphic to the semigroup
generated by $\{g|_{e(\Sigma^*)} \ : \ g \in G \}$.

Construct an invertible automaton $\mathcal{B}=(\overline{Q}\cup
\{1\},\Sigma=\{\sigma_1,...,\sigma_n\},\overline{t},\overline{o})$
as follows. The state set is $\overline{Q} \cup \{i\}$ where
$\overline{Q}$ is a set in bijection with $Q$ and
$i=(i,...,i)[\sigma_1,...,\sigma_n]$, i.e. $i$ is a sink state that
pointwise fixes $\Sigma^*$. The transition function is given by
\[\overline{t(q,\sigma)}=\begin{cases} t(q,\sigma) & \ \text{if} \
\sigma \in e(\Sigma) \\ i & \ \text{if} \ \sigma \not \in e(\Sigma)
\end{cases} \] and the output function is given by
\[\overline{o(q,\sigma)}=\begin{cases} o(q,\sigma) & \ \text{if} \
\sigma \in e(\Sigma) \\ \sigma & \ \text{if} \ \sigma \not \in
e(\Sigma)
\end{cases} \]

Let $g \in G$ and let $w \in \Sigma^*-e(\Sigma^*)$ be of minimal
length. Write $w=v\sigma$ where $v \in e(\Sigma^*)$.  Then the above
conditions imply that, for any $w' \in \Sigma^*$,
$\hat{q}(ww')=q(w)\sigma w'$.  In other words, each state
$\overline{q}$ of $\mathcal{B}$ will mimic the action of $q$ on
words that are in the image of $e$, but will enter the state $i$ and
act identically on the suffix of a word $w$ following the largest
prefix of $w$ lying in $e(\Sigma^*)$. So the part of the action
which does not act bijectively and in a length-preserving fashion
collapses to the identity, and we have an invertible automaton
giving $G$.

None of the above uses that the automata have finitely many states,
so the same logic shows that $G$ is a self-similar group if and only
if $G$ is an expanding self-similar semigroup.  \end{proof}

The idea in the last proof allows us to prove the following:
\begin{prop}\label{subgroups of EAS's}
Let $S$ be an expanding automaton semigroup and $H$ a subgroup of
$S$. Then there is a self-similar group $G$ with $H \leq
G$.\end{prop}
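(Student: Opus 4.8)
The plan is to reduce, via the idempotent identity $e$ of $H$, to an action by length-preserving bijections, and then to realize that action by an invertible synchronous automaton (with possibly infinitely many states), imitating the collapsing construction from the proof of Proposition \ref{EAS gp iff aut gp}.

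First I would record the structural facts about $H$. Let $e \in H$ be the identity of the subgroup, so $e$ is idempotent and $eh = he = h$ for every $h \in H$. Exactly as in the proofs of Propositions \ref{left invertible iff right invertible} and \ref{EAS gp iff aut gp}, for $h \in H$ with inverse $h^{-1} \in H$ the inclusions $e(\Sigma^*) = h(h^{-1}(\Sigma^*)) \subseteq h(\Sigma^*) = e(h(\Sigma^*)) \subseteq e(\Sigma^*)$ force $h(\Sigma^*) = e(\Sigma^*)$; since $h^{-1}h = e$ fixes its range pointwise and no element of $S$ can shorten words, for $w \in e(\Sigma^*)$ we get $|w| \leq |h(w)| \leq |h^{-1}(h(w))| = |e(w)| = |w|$, so $h$ restricts to a length-preserving bijection of $T_e := e(\Sigma^*)$. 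The restriction map $h \mapsto h|_{T_e}$ is moreover an injective homomorphism $H \to \mathrm{Sym}(T_e)$: if $h_1|_{T_e} = h_2|_{T_e}$, then for every $w$ we have $h_1(w) = h_1(e(w)) = h_2(e(w)) = h_2(w)$ because $e(w) \in T_e$ and $h_i = h_i e$. Thus it suffices to realize the group $H|_{T_e}$ of length-preserving automorphisms of $T_e$ as a subgroup of a self-similar group.

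Next I would analyze the tree $T_e$. Because $e$ is prefix-preserving and length-expanding, $T_e$ is a subdivided subtree of $\mathcal{T}(\Sigma^*)$ whose branch vertices are the points $e(w)$, and whose cone at $e(w)$ equals $e(w)\cdot e_w(\Sigma^*)$, where $e_w$ is the section of $e$ at $w$. Since $e$ is a state of a finite automaton it has only finitely many distinct sections, so $T_e$ has only finitely many isomorphism types of cones, and the sections (within $T_e$) of the maps $h|_{T_e}$ again act by length-preserving bijections between cones of the same type. The plan is then to take as states of a new automaton all these sections of all elements of $H$, together with a sink state fixing everything pointwise, and, following the sink-collapse recipe of Proposition \ref{EAS gp iff aut gp}, to let each state act like the corresponding section on the recurrent part $T_e$ and collapse to the identity off $T_e$. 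Allowing infinitely many states is harmless, since we need only a self-similar group.

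The main obstacle is the length-expansion of $e$: because $e(\sigma)$ may be longer than a single letter, $T_e$ need not be a full regular tree $\Delta^*$ and the branch degrees of the vertices $e(w)$ need not be constant, whereas a self-similar group by definition acts on a regular rooted tree. I would resolve this by re-coordinatizing. Letting $d \leq |\Sigma|$ be the maximal branch degree occurring in $T_e$, I would embed $T_e$ into the regular tree $\{1,\dots,d\}^*$ by padding the child set of every branch vertex out to exactly $d$ with fixed ``dummy'' cones, and extend every $h|_{T_e}$ by the identity on the padding. This turns the family of sections into the states of an invertible synchronous automaton over $\{1,\dots,d\}$. Verifying that these extensions remain length-preserving bijections, that $h \mapsto \overline{h}$ stays an injective homomorphism, and that the set of all sections is closed under taking further sections (so that it genuinely forms a self-similar family) is where the careful bookkeeping lies; once this is checked, the group $G$ generated by the resulting states is self-similar and contains an isomorphic copy of $H$, which is the claim.
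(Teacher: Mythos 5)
Your first half coincides with the paper's argument: the paper likewise restricts $H$ to $T_e=e(\Sigma^*)$, notes that each $h\in H$ acts there as a length-preserving bijection, and then builds an invertible, possibly infinite-state, synchronous automaton from the sections by collapsing everything off $T_e$ to an identity sink, exactly as in the proof of Proposition \ref{EAS gp iff aut gp}. Where you diverge is at your ``main obstacle,'' and that obstacle is largely illusory, so the padding/re-coordinatization onto $\{1,\dots,d\}^*$ is an unnecessary detour. Since $e$ is idempotent and every element of an expanding automaton semigroup is prefix-preserving and length-expanding, $\mathrm{range}(e)$ is in fact \emph{prefix-closed}: if $e(v)=v$ and $u\sigma$ is a prefix of $v$ with $e(u)=u$, then $e(u\sigma)=ue_u(\sigma)$ is a prefix of $v$ of length at least $|u|+1$, say $u\sigma\beta$, and idempotency gives $u\sigma\beta=e(u\sigma\beta)=u\sigma\beta\, e_{u\sigma}(\beta)$, which forces $\beta=\emptyset$ because sections cannot shorten words. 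Hence $T_e=\mathrm{range}(e)=\mathrm{Fix}(e)$ is an honest rooted subtree of $\mathcal{T}(\Sigma^*)$ with no subdivided edges, and the sink-collapse you yourself describe already yields level-preserving bijections of the \emph{original} regular tree $\mathcal{T}(\Sigma^*)$; this is precisely the paper's construction, and no change of alphabet is needed. Your padding route would presumably also succeed, but it buys nothing and multiplies the bookkeeping. The one point that genuinely needs care in either version (and which the paper also passes over silently) is that for $u\in T_e$ the set of letters $\sigma$ with $u\sigma\in T_e$ need not equal the set of letters $\sigma$ with $h(u)\sigma\in T_e$; one only knows these sets have the same cardinality, because $h$ carries the $T_e$-children of $u$ bijectively onto those of $h(u)$. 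So ``acting as the identity off $T_e$'' must be interpreted as choosing some bijection between the complementary children of $u$ and of $h(u)$ and acting identically below it, rather than literally preserving the labels of the non-$T_e$ children.
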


\begin{proof}
Let $S$ be an expanding automaton semigroup and $H$ a subgroup of
$S$.  Let $e$ denote the identity of $H$. Let
$\mathcal{A}=(Q,\Sigma,t,o)$ be the expanding automaton associated
with $S$. As in the proof of Proposition \ref{EAS gp iff aut gp},
$H$ is isomorphic to the semigroup generated by $\{h|_{e(\Sigma^*)}
\ : \ h \in H \}$ and each element of $H$ acts injectively and in
length-preserving fashion on $e(\Sigma^*)$.  Then we can again
collapse the ``non-group" part of the action to the state which
fixes the tree to get a length-preserving and invertible action of
$H$. Thus we can construct an invertible (and possibly infinite
state) synchronous automaton containing the elements of $H$ as
states.  The states generated by this automaton is a self-similar
group $G$ with $H \leq G$.\end{proof}

If $S$ is an expanding automaton semigroup and $H$ is a subgroup of
$S$, then $S$ is a subgroup of a self-similar group, but $H$ is not
necessarily self-similar.  If $H$ is the unique maximal subgroup of
$S$, then we show below that $H$ is self-similar.

\begin{prop}\label{unique maximal subgroup}
Let $S$ be an expanding automaton semigroup with a unique maximal
subgroup $G$. Then $G$ is self-similar.\end{prop}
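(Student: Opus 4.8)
The plan is to leverage the preceding propositions in exactly the way the surrounding text hints. By Proposition \ref{subgroups of EAS's}, the unique maximal subgroup $G \leq S$ sits inside some self-similar group, and each element of $G$ acts injectively and in a length-preserving fashion on $e(\Sigma^*)$, where $e$ is the identity of $G$. The goal is to upgrade this to the statement that $G$ itself is self-similar, i.e. that $G$ is generated by the states of a (possibly infinite-state) invertible synchronous automaton and is \emph{closed under taking sections}. The key realization is that self-similarity is precisely a closure condition: a group is self-similar iff whenever $g \in G$ and $\sigma \in \Sigma$, the section $g_\sigma$ (restricted appropriately to the group-like part of the tree) again lies in $G$.

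First I would recall the construction from Proposition \ref{EAS gp iff aut gp}: we collapse the non-group part of the action to the sink state $i$, so that each $g \in G$ is represented by a state $\overline{g}$ of an invertible synchronous automaton $\mathcal{B}$ acting level-preservingly on $\Sigma^*$. The states of $\mathcal{B}$ generate \emph{some} self-similar group $G'$ with $G \leq G'$, but a priori $G'$ may be strictly larger than $G$ because the sections $\overline{g}_\sigma$ need not correspond to elements of $G$. The crux is therefore to show that the sections of elements of $G$ stay inside $G$. Here I would use the hypothesis that $G$ is the \emph{unique maximal} subgroup of $S$. For $g \in G$ and $\sigma \in e(\Sigma)$, the section $g_\sigma$ is itself an element of $S$ (sections of elements of $S(\mathcal{A})$ are again in $S(\mathcal{A})$, by the composition formula \eqref{composition formula} and the section discussion preceding the definition). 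The plan is to show each such section lies in a subgroup of $S$: since $g$ is invertible on $e(\Sigma^*)$ with inverse $g^{-1} \in G$, the sections $g_\sigma$ and $(g^{-1})_{g(\sigma)}$ should be mutually inverse on the relevant subtree, forcing $g_\sigma$ to generate a subgroup. By uniqueness of the maximal subgroup $G$, every subgroup of $S$ is contained in $G$, so $g_\sigma \in G$ (after identifying it with its restriction to the group-like part of the tree).

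The main obstacle I anticipate is making precise the bookkeeping between $g$ as an element of $S$ (with its length-expanding behavior off of $e(\Sigma^*)$) and $g$ as a level-preserving automorphism on $e(\Sigma^*)$, and then checking that a section $g_\sigma$, computed via the expanding composition formula, is idempotent-dominated by some idempotent so that it actually lies in a group. Concretely, I would verify that $e_\sigma := (e)_\sigma$ is an idempotent section acting as an identity on the appropriate subtree, and that $g_\sigma$ lives in the group of units $H_{e_\sigma}$ of that local subsemigroup; then that $H_{e_\sigma}$ is a subgroup of $S$, hence contained in $G$ by maximality and uniqueness. Once the sections are shown to stay in $G$, the automaton $\mathcal{B}$ restricted to states indexed by $G$ (with the sink collapse) is an invertible synchronous automaton all of whose states lie in $G$ and which is closed under sections, so it generates exactly $G$, proving $G$ is self-similar. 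I would close by remarking that finiteness of the state set is never used, matching the final sentence of Proposition \ref{EAS gp iff aut gp}.
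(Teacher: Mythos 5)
Your proposal follows essentially the same route as the paper's proof: show that the sections of the identity $e$ at the letters it fixes are idempotent and hence equal $e$ (using that a unique maximal subgroup forces a unique idempotent), deduce from the sections of $g^{-1}g=e$ that $g_\sigma$ is invertible with identity $e$ and therefore lies in $G$, iterate over $\mathrm{range}(e)$, and collapse the non-group part of the action as in Proposition \ref{EAS gp iff aut gp}. The only cosmetic difference is that the paper gets invertibility of $g_\sigma$ from left-invertibility via Proposition \ref{left invertible iff right invertible}, whereas you check two-sided invertibility directly and then appeal to every subgroup lying in the unique maximal subgroup; both are valid.
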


\begin{proof}
Let $\mathcal{A}=(Q,\Sigma,t,o)$ be the automaton associated with
$S$.  Let $g \in G$ and write $g=(g_1,...g_n)\tau_g$ where
$n=|\Sigma|$.  Let $e$ be the identity of $G$, and write
$e=(e_1,...,e_n)\tau_e$.  Since $e$ is idempotent, $e$ fixes
range$(e)$, and thus the set $\hat{\Sigma}:=\{\sigma \in \Sigma \ |
\ e(\sigma)=\sigma \}$ is non-empty.  To see this, let $\sigma \in
\Sigma$ and suppose that $e(\sigma)=\sigma'w$ for some $\sigma' \in
\Sigma$.  Then $e$ fixes $\sigma'w$, and since $e$ is
length-expanding $e(\sigma')=\sigma'$.  Since $e$ is idempotent,
$e_{\hat{\sigma}}$ is idempotent for all $\hat{\sigma} \in
\hat{\Sigma}$.  This is true because
$(e^n)_{\hat{\sigma}}=(e_{\hat{\sigma}})^n$.  Since $G$ is the
unique maximal subgroup of $S$, there is only one idempotent element
of $S$.  Thus $e_{\hat{\sigma}}=e$ for all $\hat{\sigma} \in
\hat{\Sigma}$.

Let $\sigma \in \hat{\Sigma}$.  Then $\tau_g(\sigma) \in
\hat{\Sigma}$ and so $e_{\tau_g(\sigma)}=e$.  Thus Equation
(\ref{composition formula}) implies
\[ g_{\sigma}=(eg)_{\sigma}=e_{\tau_g(\sigma)}g_{\sigma}\]
and, as $e$ stabilizes $\sigma$,
\[g_{\sigma}=(ge)_{\sigma}=g_{\sigma}e_{\sigma}.\]
Hence $eg_{\sigma}=g_{\sigma}e=g_{\sigma}$ for any $\sigma \in
\hat{\Sigma}$.

Let $h=g^{-1}$, $\sigma \in \hat{\Sigma}$, and write
$h=(h_1,...,h_n)\tau_h$. By the same logic as above,
$eh_{\sigma}=h_{\sigma}e=h_{\sigma}$.  Since $hg=e$ we have
\[(hg)_{\sigma}=h_{\tau_g(\sigma)}g_{\sigma}=e_{\sigma}=e\]
Since $g_{\sigma}$ is left-invertible, Proposition \ref{left
invertible iff right invertible} implies that $g_{\sigma}$ is
invertible. Therefore $g_{\sigma} \in G$ for all $\sigma \in
\hat{\Sigma}$.

Continuing inductively, we see that $g_w \in G$ for all $w \in
\text{range}(e)$.  Similar to the proof of Proposition \ref{EAS gp
iff aut gp}, if $w \not \in  \text{range}(e)$ then for all $g \in G$
we can replace $g_w$ with $e$ and the resulting group will still be
isomorphic to $G$.  This is because, as in Proposition \ref{EAS gp
iff aut gp}, the action of $G$ on range$(e)$ captures all of the
group information.  Thus $G$ is an expanding self-similar semigroup,
and Proposition \ref{EAS gp iff aut gp} implies that $G$ is a
self-similar group.\end{proof}

If $\mathcal{A}$ is an invertible synchronous automaton, then
$S(\mathcal{A})$ has at most one idempotent, namely the identity
function on the tree.  Thus Proposition \ref{unique maximal
subgroup} has the following corollary.

\begin{cor}\label{cor about inv syn aut}
Let $\mathcal{A}$ be an invertible synchronous automaton.  Then the
group of units of $S(\mathcal{A})$ is self-similar.
\end{cor}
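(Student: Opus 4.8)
The plan is to derive the corollary as a direct specialization of Proposition \ref{unique maximal subgroup}, so the real content is verifying that its hypothesis holds: that $S(\mathcal{A})$ has a \emph{unique} maximal subgroup, and that this subgroup is the group of units. First I would note that an invertible synchronous automaton is in particular an expanding automaton, since its output function lands in $\Sigma \subseteq \Sigma^+$. Hence $S(\mathcal{A})$ is an expanding automaton semigroup and Proposition \ref{unique maximal subgroup} is available to us.

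Next I would pin down the idempotents of $S(\mathcal{A})$. Because $\mathcal{A}$ is invertible and synchronous, each generating state induces a level-preserving \emph{bijection} of $\mathcal{T}(\Sigma^*)$; a composition of bijections is again a bijection, so every element of $S(\mathcal{A})$ is a bijection of the tree. An idempotent bijection is forced to be the identity map, so the only possible idempotent is $e_{\Sigma}$, the identity function on $\mathcal{T}(\Sigma^*)$. This is exactly the observation recorded just before the corollary, namely that $S(\mathcal{A})$ has at most one idempotent.

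I would then convert this into a statement about maximal subgroups. Every subgroup of a semigroup contains an identity element, which is an idempotent of the ambient semigroup, and distinct maximal subgroups carry distinct identity idempotents. Since $S(\mathcal{A})$ has at most one idempotent, it therefore has at most one maximal subgroup. If $e_{\Sigma} \in S(\mathcal{A})$, then $e_{\Sigma}$ is a two-sided identity (being the identity function on the tree), so $S(\mathcal{A})$ is a monoid whose group of units is precisely the maximal subgroup with identity $e_{\Sigma}$, i.e. the unique maximal subgroup $G$. Proposition \ref{unique maximal subgroup} then gives that $G$ is self-similar, which is the claim.

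The only delicate point, and hence the main obstacle, is the degenerate case in which $e_{\Sigma} \notin S(\mathcal{A})$: then $S(\mathcal{A})$ has no idempotents and so no subgroups at all, so its group of units is empty and the statement is vacuous. Thus the whole argument hinges on the single structural fact that invertibility forces every element to be a bijection—making the identity the sole candidate idempotent—after which the corollary is an essentially immediate reading of Proposition \ref{unique maximal subgroup}, once one fixes the convention that the group of units is the maximal subgroup at the identity (empty when no identity is present).
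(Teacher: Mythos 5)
Your proposal is correct and follows essentially the same route as the paper, which likewise observes that for an invertible synchronous automaton $S(\mathcal{A})$ has at most one idempotent (the identity function on the tree) and then invokes Proposition \ref{unique maximal subgroup}. Your additional care about the degenerate case $e_{\Sigma} \notin S(\mathcal{A})$ and the identification of the group of units with the unique maximal subgroup is just a fuller spelling-out of the same argument.
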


\section{Closure Properties and further examples}

\subsection{Closure Properties}
We begin this section by showing that the class of expanding
automaton semigroups is not closed under taking normal ideal
extensions.  In particular, we show that the free semigroup of rank
1 with a zero adjoined is not an expanding automaton semigroup.  We
then show that the class of asynchronous automaton semigroups is
closed under taking normal ideal extensions.

\begin{lem}\label{free sgp rank 1 with 0 not aut sgp}
The free semigroup of rank 1 with a zero adjoined is not an
automaton semigroup.\end{lem}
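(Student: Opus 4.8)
The plan is to argue by contradiction, adapting the exponent-monotonicity idea behind Proposition 4.3 of \cite{C} (that the free semigroup of rank $1$ is not an automaton semigroup) so that it tolerates the adjoined zero. Write the semigroup as $N=\{a,a^2,\dots\}\cup\{z\}$, where $z$ is the absorbing zero and $a$ has infinite order, and suppose for contradiction that $N=S(\mathcal{A})$ for a synchronous automaton $\mathcal{A}=(Q,\Sigma,t,o)$. First I would record the basic structural facts: every state of $\mathcal{A}$ is an element of $N$, so each state is either a power $a^e$ (with $e\ge 1$) or $z$; the generator $a$ is indecomposable in $N$ (any factorization $a=uv$ forces $a^{\ge 2}$ or $z$), so $a$ must itself be a state; and since $\mathcal{A}$ is synchronous its states act length-preservingly, so $\tau_a\colon\Sigma\to\Sigma$ is defined and, as noted in the excerpt, every section of a state is again a state.

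Second, I would extract the key rigidity from the finiteness of $Q$. Let $E$ be the largest exponent occurring among the power-states, so that $a^E$ is a state and its sections are states. Using Equation (\ref{composition formula}) repeatedly, the section of $a^E$ at a letter $\sigma$ is the product in $N$ of the sections $a_{\tau_a^m(\sigma)}$ for $0\le m<E$. Because $z$ is absorbing, this product is $z$ as soon as one factor is $z$; otherwise it is $a^{\,c_0+\cdots+c_{E-1}}$ with each $c_m\ge 1$. But this section is a state, hence has exponent $\le E$, and a sum of $E$ positive integers is $\le E$ only when every summand equals $1$. Calling a letter \emph{dead} when its $a$-section is $z$, I conclude that whenever the forward $\tau_a$-orbit $\sigma,\tau_a(\sigma),\dots,\tau_a^{E-1}(\sigma)$ avoids dead letters one must have $a_\sigma=a$. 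Declaring $\sigma$ \emph{good} if its entire forward $\tau_a$-orbit avoids dead letters, this gives $a_{\sigma'}=a$ for every $\sigma'$ in the orbit of a good letter, while for a non-good (\emph{bad}) letter the orbit reaches a dead letter within $|\Sigma|$ steps, after which the section collapses permanently to $z$.

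Third, I would assemble these local facts into periodicity of $a$. Fixing $t_0\ge|\Sigma|$, for every $k\ge t_0$ the section $(a^k)_\sigma$ equals $a^k$ when $\sigma$ is good and equals $z$ when $\sigma$ is bad. Since $\{\tau_a^k\}$ is eventually periodic in the finite transformation monoid on $\Sigma$, I can choose $t_0\le p<q$ with $\tau_a^p=\tau_a^q$. A straightforward induction on word length then yields $a^p(w)=a^q(w)$ for all $w\in\Sigma^*$: writing $w=\sigma w'$, the first letters agree because $\tau_a^p=\tau_a^q$, the sections at bad letters agree (both $z$), and the sections at good letters are $a^p$ and $a^q$, which agree on $w'$ by the inductive hypothesis. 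Hence $a^p=a^q$, contradicting the infinite order of $a$ in $N$.

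The main obstacle, and the only place the argument genuinely departs from the zero-free case, is the presence of sections equal to $z$: these destroy the clean ``exponents are nondecreasing along sections'' bookkeeping that drives Cain's proof, since $z$ has no well-defined exponent and absorbs everything. The good/bad dichotomy above is designed precisely to quarantine the dead branches (where absorption makes all sufficiently high powers agree automatically) from the live branches (where the maximal-exponent rigidity forces $a_\sigma=a$ exactly as in the free monogenic case), so that on both kinds of branches the sections of $a^k$ become independent of $k$ for large $k$, which is what powers the final induction.
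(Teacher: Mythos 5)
Your proof is correct and follows essentially the same route as the paper's: both arguments split the letters of $\Sigma$ according to whether the sections of $a$ along the forward $\tau_a$-orbit ever hit the zero (your ``bad''/dead letters, the paper's $\hat{\Sigma}$), show the sections stabilize in the absorbing case and are forced to equal $a$ in the power-of-$a$ case, and then use eventual periodicity of $\tau_a$ to extract $p<q$ with $a^p=a^q$. The only cosmetic difference is that you force $a_\sigma=a$ by bounding exponents against the maximal state exponent $E$, where the paper instead derives a contradiction from $a$ acquiring infinitely many distinct sections; both are the same finiteness-of-states argument.
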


\begin{proof}
Let $S$ be an automaton semigroup over an alphabet
$\Sigma=\{\sigma_1,...,\sigma_n\}$ such that $S$ is generated by two
elements $a$ and $b$ with $ab=ba=b$ and $b^2=b$.  We use the same
idea of the proof of Proposition \ref{not aut sgps} to show that $a$
is periodic.

Let $\sigma \in \Sigma$.  Suppose that the section of $a$ at
$a^n(\sigma)=b$ for some $n$.  Then
$(a^n)_{\sigma}=(a^{n+k})_{\sigma}$ for all $k \in \mathbb{N}$. If
the section of $a$ at $a^n(\sigma)$ is a power of $a$ for all $n$,
then (as in the proof of Proposition \ref{not aut sgps}) the section
of $a$ at $a^n(\sigma)$ is $a$ for all $n$.

Let $\hat{\Sigma}=\{\sigma \in \Sigma \ | \ (a^r)_{\sigma}=b \
\text{for some} \ r \}$.  As in the previous proof, we can choose
$s$ and $t$ such that $\tau_{a^s}=\tau_{a^t}$ and
$(a^s)_{\sigma}=(a^t)_{\sigma}$ for all $\sigma \in \hat{\Sigma}$.
Then the same logic of the previous proof shows that
$a^s=a^t$.\end{proof}

We now apply Lemma \ref{free sgp rank 1 with 0 not aut sgp} to show
the following.

\begin{prop}\label{N with zero adjoined not EAS}
The class of expanding automaton semigroups is not closed under
taking normal ideal extensions.  In particular, the free semigroup
of rank 1 with a zero adjoined is not an expanding automaton
semigroup.\end{prop}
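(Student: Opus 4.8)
The plan is to argue by contradiction, reprising the proof of Lemma \ref{free sgp rank 1 with 0 not aut sgp}, and to isolate the single place where the synchronous reasoning fails in the expanding setting. Suppose the free semigroup of rank $1$ with a zero adjoined, call it $S$, were an expanding automaton semigroup, realized by an expanding automaton $\mathcal{A}=(Q,\Sigma,t,o)$. Write its generators as $a$ (the free generator) and $b$ (the adjoined zero), so that $b^2=b$ and $ab=ba=b$, and recall that $a$ is non-periodic, i.e. $a^s\neq a^t$ whenever $s\neq t$, and that every element of $S$ is either a power of $a$ or is $b$. As in Lemma \ref{free sgp rank 1 with 0 not aut sgp}, the goal is to produce $s<t$ with $a^s=a^t$, contradicting non-periodicity.

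The decisive structural fact in the synchronous Lemma is that $a$ is length-preserving, so there are only finitely many functions $\tau_{a^n}\colon\Sigma\to\Sigma$ and two of them must coincide. In the expanding setting $a$ may expand length, so I must recover this finiteness differently, and this is the one genuinely new step. The relation $ba=b$ gives $ba^n=b$ for all $n$, that is, $b(a^n(w))=b(w)$ for every $w\in\Sigma^*$. Since $b$ is length-expanding, $|a^n(\sigma)|\leq |b(a^n(\sigma))|=|b(\sigma)|\leq M$, where $M:=\max_{\sigma\in\Sigma}|b(\sigma)|$. Hence $\tau_{a^n}(\sigma)=a^n(\sigma)$ always lies in the finite set $\{v\in\Sigma^+ : |v|\leq M\}$, so the functions $\tau_{a^n}$ range over a finite set and $\tau_{a^s}=\tau_{a^t}$ for suitable $s<t$. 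This uniform length cap is exactly what fails for $S_{m,n}$ in Proposition \ref{not aut sgps}, where only the one-sided relation $ab=b$ holds and $a$ may expand without bound (as in the Thue--Morse automaton of Example \ref{Thue-Morse}); it is the two-sided zero relation $ba=b$ that forbids this, and I expect establishing it to be the main obstacle.

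With finiteness of $\{\tau_{a^n}\}$ secured, the remainder transcribes Lemma \ref{free sgp rank 1 with 0 not aut sgp} and Proposition \ref{not aut sgps}, since every section $(a^n)_\sigma$ is either a power of $a$ or $b$. If $(a^r)_\sigma=b$ for some $r$, then Equation (\ref{composition formula}) gives $(a^{r+1})_\sigma=a_{\tau_{a^r}(\sigma)}\,(a^r)_\sigma=a_{\tau_{a^r}(\sigma)}\,b=b$ because $b$ is a right zero, so such a section stabilizes at $b$; set $\hat\Sigma=\{\sigma : (a^r)_\sigma=b\text{ for some }r\}$ and note likewise that every section of $b$ is $b$. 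For $\sigma\notin\hat\Sigma$ all sections $(a^n)_\sigma$ are powers of $a$, and the finiteness-of-sections argument of Proposition \ref{not aut sgps} (the automaton for the single state $a$ has finitely many sections, so iterated sections cannot yield unboundedly growing powers) forces the relevant sections of $a$ to equal $a$, whence $(a^n)_\sigma$ reduces to $a^n$. Choosing $s<t$ with $\tau_{a^s}=\tau_{a^t}$ and with $(a^s)_\sigma=(a^t)_\sigma=b$ for all $\sigma\in\hat\Sigma$, I then verify $\tau_{(a^s)_w}=\tau_{(a^t)_w}$ for every $w\in\Sigma^*$: once $w$ reaches a letter of $\hat\Sigma$ both sections have collapsed to $b$, and otherwise both sections are the pure powers $a^s,a^t$, whose $\tau$ is the common value $\tau_{a^s}=\tau_{a^t}$. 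Since two expanding endomorphisms agree exactly when the functions $\tau$ of all their sections agree, this yields $a^s=a^t$, the desired contradiction, and together with Example \ref{Thue-Morse} it shows that the class of expanding automaton semigroups is not closed under normal ideal extensions.
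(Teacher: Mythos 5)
Your opening move is correct and genuinely nice: from $ba^n=b$ and the fact that every element of an expanding automaton semigroup is length-expanding, you get $|a^n(\sigma)|\leq|b(a^n(\sigma))|=|b(\sigma)|$, so the words $\tau_{a^n}(\sigma)$ lie in a fixed finite set and $\tau_{a^s}=\tau_{a^t}$ for some $s<t$. This is cleaner than the route the paper takes to control lengths: the paper instead analyzes which letters can recur in the words $a^n(w)$, shows every word is pushed after at most $|\Sigma|$ steps into a subalphabet $\Gamma$ on which the action eventually becomes length-preserving, and then literally reduces to the synchronous Lemma \ref{free sgp rank 1 with 0 not aut sgp}.

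The gap is in the claim that the rest ``transcribes'' Lemma \ref{free sgp rank 1 with 0 not aut sgp} and Proposition \ref{not aut sgps} --- specifically the assertion that for $\sigma\notin\hat{\Sigma}$ the relevant sections of $a$ are forced to equal $a$, so that $(a^n)_\sigma=a^n$. The finiteness-of-sections argument you invoke is tied to the synchronous setting: there $a^i(\sigma_j)$ is a single letter moving around a cycle inside $\Sigma$, so $(a^{m_j})_{\sigma_j}$ decomposes as a product of the sections $a_{\sigma_{j+i}}$, each already known to be a power of $a$, and one obtains strictly growing powers $a_{\sigma_j^k}=a^{n_k}$ among the finitely many states. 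In your setting $a^i(\sigma)=\tau_{a^i}(\sigma)$ is a word $v$ of length up to $M$, and iterating requires computing $(a^m)_v=((a^m)_{v_1})_{v_2\cdots v_l}$, which drags in sections of $a$ at letters and words outside the forward orbit of $\sigma$; nothing you have established says these are powers of $a$ rather than $b$ (which would kill the ``strictly growing'' computation), nor that the exponents are small. So ``the relevant sections of $a$ equal $a$'' is unproved, and with it the identity $(a^n)_\sigma=a^n$ on which your final level-by-level verification rests; note that a product of $n$ states can perfectly well equal $a^{k_n}$ with $k_n$ growing faster than $n$ without contradicting finiteness of the state set. (Even granting only $(a^n)_\sigma=a^{k_n}$ for some sequence $k_n$, you would then have to synchronize $\tau_{a^{k_s}}=\tau_{a^{k_t}}$ and recurse through all levels, which is a further argument, not a transcription.) You also assert that every section of $b$ is $b$ without proof; the paper needs a short argument for this, namely that $b_\sigma=a^n$ would force $a^n$ to be idempotent. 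To close the argument you would need either to follow the paper in showing that $a$ acts length-preservingly on $a^m(\Sigma^*)$ for a uniform $m$ and then invoke the synchronous lemma there, or to supply a genuinely new analysis of the sections of the powers $a^n$.
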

\begin{proof}
Let $S=<a,b \ | \ b^2=b, \ ab=ba=b>$ be the free semigroup of rank 1
with a zero adjoined, and suppose $S$ were an expanding automaton
semigroup corresponding to the automaton
$\mathcal{A}=(Q,\Sigma,t,o)$. Since $b$ is idempotent, $b$ fixes
range$(b)$. Hence the set $\hat{\Sigma}=\{\sigma \in \Sigma \ | \
b(\sigma)=\sigma\}$ is non-empty.  Since $b$ is the only idempotent
of $S$, $b_{\hat{\sigma}}=b$ for all $\hat{\sigma} \in
\hat{\Sigma}$.

Let $\sigma \in \Sigma-\hat{\Sigma}$, and suppose that
$b_{\sigma}=a^n$ for some $n>0$.  Let $w \in \Sigma^*$.  Then
$b(\sigma w)=b(\sigma)a^n(w)$.  Since $b$ fixes range$(b)$, we have
that $b(b(\sigma w))=b(\sigma)a^n(w)$.  We also have that $b$ fixes
$b(\sigma)$ and the section of $b$ at $b(\sigma)$ is $b$. Thus $b$
fixes $a^n(w)$, and (as $w$ is arbitrary) $ba^n=a^n$ in $S$.  But
$ba^n=b$, which implies that $a^n$ is idempotent.  Since $a^n$ is
not idempotent in $S$, we have $b_{\sigma}=b$ for all $\sigma \in
\Sigma$.  Note that $b$ must be a state of $\mathcal{A}$ as powers
of $a$ cannot multiply to obtain $b$.  Thus, in the graphical
representation of $\mathcal{A}$, all edges going out of $b$ are
loops based at $b$.  Note also that $a$ must be a state of
$\mathcal{A}$.

Let $\Gamma = \{\sigma \in \Sigma \ : \ |a^m(\sigma)|=1 \ \text{for
all} \ m\}$.  The equation $ab=b$ implies that $a$ fixes range$(b)$,
and so $\Gamma$ is nonempty.  In $\mathcal{A}$, for each state $q$
in $<a>$ and $\gamma \in \Gamma$ there is an arrow labeled by
$\gamma | \hat{\gamma}$ coming out of $q$ where $\hat{\gamma} \in
\Gamma$. Let $w \in \Gamma^*$ with $w=\gamma_1...\gamma_k$.  Suppose
that $|a(w)|>1$. Then $w$, as a path in $\mathcal{A}$ based at $a$,
must enter the state $b$. Choose $i$ maximal so that
$\gamma_1...\gamma_{i-1}$ is a path such that the initial vertex of
each edge is not the state $b$. Then $a(w)=\gamma_1'...\gamma_k'$
where $\gamma_m' \in \Gamma$ for $1 \leq m \leq i-1$ and $\gamma_m'
\in \hat{\Sigma}^*$ for $i \leq m \leq k$.  Since $a$ fixes
$\hat{\Sigma}^*$, $|a^n(w)|=|a^2(w)|$ for all $n \geq 2$.  Thus for
any $w \in \Gamma^*$, $|a^{|\Sigma|}(w)|=|a^k(w)|$ for any $k \geq
|\Sigma|$.

Suppose that $|a(\sigma)|=1$ for all $\sigma \in \Sigma$.  Then the
same logic as in the proof of Proposition \ref{not aut sgps} shows
that either $a$ is periodic or has infinitely many sections (note
that the proof does not use that the periodic element acts in a
length-preserving fashion).  So the set $\Sigma'=\{\sigma \in \Sigma
\ : \ |a(\sigma)|>1 \}$ is nonempty.  Let $\sigma' \in \Sigma'$, and
write $a(\sigma')=\sigma_1...\sigma_m$ where $\sigma_i \in \Sigma$.
Suppose that $\sigma_i=\sigma'$ for some $i$.  Then
$b(a(\sigma'))=b(\sigma_1...\sigma_n)=b(\sigma_1)...b(\sigma_m)=b(\sigma')$,
and so $|b(a(\sigma'))|>|b(\sigma')|$, a contradiction. Thus
$\sigma'$ is not a letter of $a(\sigma')$.  The same calculation
also shows that $\sigma'$ is not a letter of $a^n(\sigma')$ for any
$n$ and that $\sigma'$ is not a letter of $a(\sigma_i)$ for any $i$.

 Let $w \in \Sigma^*$ and write
$w=\sigma_1...\sigma_k$. Suppose that $\sigma_i \not\in \Gamma$ for
some $i$. Then every edge in $\mathcal{A}$ with input label
$\sigma_i$ has an output label without $\sigma_i$ as a letter. Thus
$a^n(w)$ does not contain $\sigma_i$ as a letter for any $n$.  If
$a(w) \in \Gamma^*$, then as mentioned above $a$ will act in a
length-preserving fashion on $a^{|\Sigma|}(w)$.  Suppose that $a(w)
\not \in \Gamma^*$ where $\sigma_j \not \in \Gamma$ is a letter of
$a(w)$. Then $a^2(w)$ does not contain $\sigma_i$ or $\sigma_j$ as a
letter. Continuing inductively, we see that $a^{|\Sigma|}(w) \in
\Gamma^*$. Thus there is an $m \in \mathbb{N}$ such that $a$ acts in
a length-preserving fashion on $a^m(w)$ for any $w \in \Sigma^*$,
i.e. $|a^m(w)|=|a^k(w)|$ for $k \geq m$ and any $w \in \Sigma^*$.
This induces a length-preserving action of $S$ on $\Gamma^*$,
contradicting Lemma \ref{free sgp rank 1 with 0 not aut
sgp}.\end{proof}

\begin{prop}\label{AAS normal ideal extension}
Let $S$ and $T$ be asynchronous automaton semigroups.  Then the
normal ideal extension of $S$ by $T$ is an asynchronous automaton
semigroup.\end{prop}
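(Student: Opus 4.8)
The plan is to take asynchronous automata $\mathcal{A}_S$ and $\mathcal{A}_T$ generating $S$ and $T$ respectively, and combine them into a single asynchronous automaton whose generated semigroup realizes the normal ideal extension. The essential structural feature to encode is that elements of $T$ act as ``left zeros over $S$'': whenever a product is applied, the leftmost $T$-factor (if any) annihilates everything to its right. The standard trick for building left-zero behavior into an automaton over a free monoid is to work over a \emph{disjoint-alphabet tagged} construction, so I would first assume (relabeling if necessary) that $\mathcal{A}_S$ is over an alphabet $\Sigma_S$ and $\mathcal{A}_T$ is over a disjoint alphabet $\Sigma_T$, and take the combined alphabet $\Sigma = \Sigma_S \sqcup \Sigma_T \sqcup \{c\}$ for a fresh ``control'' letter $c$ whose leading occurrence records whether the current word lives in the $S$-world or the $T$-world.

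First I would build, for each generator $t$ of $T$, a state that reads a word, and on the very first letter decides: it outputs a tag that permanently commits the image to the $T$-region, thereafter acting like the corresponding generator of $\mathcal{A}_T$ but ignoring (absorbing) any further control switches. Symmetrically, each generator $s$ of $S$ would act like $\mathcal{A}_S$ on $S$-tagged words but, crucially, must \emph{pass through unchanged} any word that has already been tagged into the $T$-region --- this realizes $t \cdot s = t$ (i.e.\ $s$ acting after $t$ does nothing). The multiplication rules $x\cdot y = xy$ within each piece are then inherited from $\mathcal{A}_S$ and $\mathcal{A}_T$, the rule $s\cdot t = t$ is realized because $t$ overwrites the tag regardless of what $s$ produced, and $t\cdot s = t$ is realized because $s$ fixes $T$-tagged words. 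The main technical content is choosing the tagging so that these four regimes are kept disjoint at the level of the tree action and so that the induced semigroup is \emph{exactly} the disjoint union $S\sqcup T$ with no accidental coincidences or extra elements.

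The hard part will be verifying that no spurious relations are introduced and that distinct elements of $S\sqcup T$ remain distinct as tree endomorphisms. For this I would argue that the $S$-region and $T$-region of the tree are invariant in a way that lets one recover, from the action of any word $u$ in the combined generators, both which ``side'' $u$ belongs to (by whether $u$ moves a purely-$S$-tagged test word into the $T$-region) and, within that side, the precise element of $S$ or $T$ it induces (by restricting the action to the appropriate tagged subtree, which is an isomorphic copy of $\Sigma_S^*$ or $\Sigma_T^*$ on which $u$ acts exactly as the corresponding element of $S$ or $T$). Because the original automata are asynchronous and this construction only adds finitely many control states and a bounded amount of tagging, the combined automaton remains a finite asynchronous automaton, so its generated semigroup is an asynchronous automaton semigroup; identifying that semigroup with the normal ideal extension of $S$ by $T$ completes the proof.

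One point deserving care is the treatment of identities and of the empty-word convention: since $T$-generators must absorb the tag on their first letter, I would check separately that the composite action is well-defined on the empty word and that the ``first letter'' case analysis does not break associativity. I expect this bookkeeping, rather than any deep idea, to be the bulk of the writing, and it is where the disjoint-alphabet tagging pays off, since it makes the side of each word syntactically visible and thereby reduces the associativity and faithfulness checks to routine verification.
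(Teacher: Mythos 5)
Your overall architecture---disjoint alphabets, a single combined automaton in which $T$-generators dominate and $S$-generators fix the ``$T$-part'' of the tree---matches the paper's construction, and verifying faithfulness by restricting the action to each subalphabet is also the right idea. But the proposal is missing the one mechanism that actually makes the dominance relation $t\circ s=t$ (equivalently $t(s(w))=t(w)$ for every input $w$) hold: the $T$-states must \emph{erase} every $\Sigma_S$-letter, i.e.\ output the empty word $\emptyset$ on each letter of $\Sigma_S$ while remaining in the same state. Your mechanism---``$t$ overwrites the tag regardless of what $s$ produced''---does not achieve this. The words $s(w)$ and $w$ can differ arbitrarily in their $\Sigma_S$-portions, in length as well as in content since $\mathcal{A}_S$ is asynchronous, and a transducer processes its input letter by letter; if a $T$-state emits anything nonempty on $\Sigma_S$-letters, then $t(s(w))$ and $t(w)$ will in general differ, tag or no tag. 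The only way a finite-state letter-by-letter device can be insensitive to an arbitrary block of input is to output $\emptyset$ on each of its letters and stay in the same state while doing so, so that its subsequent action on the $\Sigma_T$-letters is unaffected. This is exactly what the paper does: $o(q_2,\sigma)=\emptyset$ and $t(q_2,\sigma)=q_2$ for $q_2\in Q_2$ and $\sigma\in\Sigma$. Once you have erasure, the extra control letter $c$ and the tagging machinery are unnecessary---disjointness of the two alphabets already makes the ``side'' of every letter syntactically visible, and both $st(w)=t(w)$ and $ts(w)=t(w)$ fall out of a one-line computation.

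It is worth emphasizing that this erasure is precisely where asynchronicity is used in an essential way: an expanding automaton must output a word in $\Sigma^+$ on every letter, so it cannot erase, and indeed the paper shows in Proposition \ref{N with zero adjoined not EAS} that the class of expanding automaton semigroups is \emph{not} closed under normal ideal extensions. Any correct proof of the present proposition must therefore exploit empty outputs somewhere; a construction that avoids them, as yours currently does, cannot succeed.
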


\begin{proof}
Let $\mathcal{A}=(Q_1,\Sigma,t_1,o_1)$ and
$\mathcal{B}=(Q_2,\Gamma,t_2,o_2)$ be asynchronous automata with
$S(\mathcal{A})=S$ and $S(\mathcal{B})=T$.  Construct a new
automaton $\mathcal{C}=(Q_1 \cup Q_2, \Sigma \cup \Gamma,t,o)$ with
transition and output functions as follows:
\[t(q_1,\sigma)=t_1(q_1,\sigma) \ \text{for all} \  q_1 \in Q_1 \ \text{and} \ \sigma \in
\Sigma\]
\[t(q_1,\gamma)=q_1 \ \text{for all} \ q_1 \in Q_1 \ \text{and} \ \gamma \in \Gamma\]
\[t(q_2,\sigma)=q_2 \ \text{for all} \ q_2 \in Q_2 \ \text{and} \ \sigma \in \Sigma \]
\[t(q_2,\gamma)=t_2(q_2,\gamma) \ \text{for all} \ q_2 \in Q_2 \ \text{and} \ \gamma \in \Gamma \]
\[o(q_1,\sigma)=o_1(q_1,\sigma) \ \text{for all} \ q_1 \in Q_1 \ \text{and} \ \sigma \in
\Sigma\]
\[o(q_1,\gamma)=\gamma \ \text{for all} \ q_1 \in Q_1 \ \text{and} \ \gamma \in \Gamma\]
\[o(q_2,\sigma)=\emptyset \ \text{for all} \ q_2 \in Q_2 \ \text{and} \ \sigma \in \Sigma \]
\[o(q_2,\gamma)=o_2(q_2,\gamma) \ \text{for all} \ q_2 \in Q_2 \ \text{and} \ \gamma \in \Gamma
\]  By construction of $\mathcal{C}$, the subsemigroup of
$S(\mathcal{C})$ generated by $Q_1$ is $S$ and the subsemigroup of
$S(\mathcal{C})$ generated by $Q_2$ is $T$.

Now let $w \in (\Sigma \cup \Gamma)^*$. Write
$w=\sigma_1\gamma_1\sigma_2\gamma_2...\sigma_n\gamma_n$ with
$\sigma_i \in \Sigma^*$ and $\gamma_j \in \Gamma^*$.  Let $s \in
Q_1^*$ and $t \in Q_2^*$.  Then
\[ts(w)=t(s(\sigma_1)\gamma_1s(\sigma_2)\gamma_2...s(\sigma_n)\gamma_n)=\emptyset t(\gamma_1)\emptyset t(\gamma_2)...t(\gamma_n)=t(\gamma_1)t(\gamma_2)...t(\gamma_n)\]
and
\[st(w)=s(t(\gamma_1)t(\gamma_2)...t(\gamma_n))=t(\gamma_1)t(\gamma_2)...t(\gamma_n)\]
Thus both $st(w)$ and $ts(w)$ equal $t(w)$, so $st=ts=t$.\end{proof}

We close this section by showing that the class of expanding
automaton semigroups is closed under direct product, provided the
direct product is finitely generated.

\begin{prop}\label{closure under direct product}
Let $S$ and $T$ be expanding automaton semigroups.  Then $S \times
T$ is an expanding automaton semigroup if and only if $S \times T$
is finitely generated.
\end{prop}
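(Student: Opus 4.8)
The statement is a biconditional, and the forward direction is immediate: every expanding automaton semigroup $S(\mathcal{C})$ is by definition generated by the finite state set of $\mathcal{C}$, hence is finitely generated. So if $S \times T$ is an expanding automaton semigroup then it is finitely generated, and this is the entire content of the ``only if'' direction. I would stress that the hypothesis is not vacuous: unlike the situation for groups, a direct product of two finitely generated semigroups need not be finitely generated (for instance, the square of the free semigroup of rank one, realized as an expanding automaton semigroup in Example \ref{Thue-Morse}, is not finitely generated even though each factor is), so the real work is the converse.

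For the converse, write $S=S(\mathcal{A})$ with $\mathcal{A}=(Q_1,\Sigma,t_1,o_1)$ and $T=S(\mathcal{B})$ with $\mathcal{B}=(Q_2,\Gamma,t_2,o_2)$, assuming without loss of generality that $\Sigma\cap\Gamma=\emptyset$. Using that $S\times T$ is finitely generated, I would fix a finite generating set $(u_1,v_1),\dots,(u_k,v_k)$ with $u_i\in S$ and $v_i\in T$. Each $u_i$ has only finitely many sections in $S$ (the sections of an element of $Q_1^*$ have bounded word length in $Q_1^*$, as noted in the construction of $\mathcal{A}_s$ preceding this section), so the set $U$ of all sections of all the $u_i$ is a finite subset of $S$; let $V$ be the analogous finite set of sections of the $v_j$ in $T$. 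I would then build an expanding automaton $\mathcal{C}$ over the alphabet $\Delta:=\Sigma\cup\Gamma$ whose state set is $U\times V$, with the rule that the state $(\alpha,\beta)$, on reading $\sigma\in\Sigma$, outputs $\alpha(\sigma)\in\Sigma^+$ and passes to $(\alpha_\sigma,\beta)$, and on reading $\gamma\in\Gamma$ outputs $\beta(\gamma)\in\Gamma^+$ and passes to $(\alpha,\beta_\gamma)$. Since $U$ and $V$ are closed under taking sections, $U\times V$ is closed under these transitions, and since every output lies in $\Delta^+$, $\mathcal{C}$ is a genuine finite expanding automaton.

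It then remains to identify $S(\mathcal{C})$ with $S\times T$. The key computation, which I expect to be the main obstacle, is to check that a state $(\alpha,\beta)$ acts on a word $w\in\Delta^*$ by applying $\alpha$ to the subword of $\Sigma$-letters of $w$ and $\beta$ to the subword of $\Gamma$-letters, reassembling the two outputs in the alphabet pattern dictated by $w$. This requires verifying, via Equation (\ref{composition formula}) and the defining relation $f(wv)=f(w)f_w(v)$ for sections, that an intervening $\Gamma$-letter does not disturb the $S$-section used on the next $\Sigma$-letter, and symmetrically; the bookkeeping of this interleaving is where care is needed. Granting it, the assignment sending $(s,t)$ to the function that acts as $s$ on the $\Sigma$-positions and as $t$ on the $\Gamma$-positions is a well-defined homomorphism $\Phi\colon S\times T\to S(\mathcal{C})$, since the $\Sigma$- and $\Gamma$-outputs compose independently. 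The map $\Phi$ is injective, because restricting a function in its image to $\Sigma^*$ recovers the $S$-component and restricting to $\Gamma^*$ recovers the $T$-component; and it is surjective, because each state $(\alpha,\beta)$ of $\mathcal{C}$ equals $\Phi(\alpha,\beta)$ while the generators $(u_i,v_i)$ are themselves states of $\mathcal{C}$ (each $u_i$ is its own section at the empty word, so $(u_i,v_i)\in U\times V$), whence $S(\mathcal{C})=\Phi(\langle U\times V\rangle)=\Phi(S\times T)$. Therefore $S(\mathcal{C})\cong S\times T$, which would complete the converse. This parallels Cain's argument for automaton semigroups in Proposition 5.5 of \cite{C}.
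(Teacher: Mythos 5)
Your argument is correct, but it realizes $S\times T$ by a genuinely different automaton than the one in the paper. The paper's proof enlarges the two given automata so that their state sets contain the finite generating sets $A\subseteq S$ and $B\subseteq T$ (as bounded-length products of the original states), and then forms one automaton over the disjoint union of alphabets whose state set is the \emph{disjoint union} $X'\cup Y'$ of the enlarged state sets: each $X'$-state acts as before on its own alphabet and literally fixes the letters of the other alphabet, and the identification with $S\times T$ rests on the resulting commutation $x'y'=y'x'$. Your construction instead takes the \emph{product} $U\times V$ as the state set, where $U$ and $V$ are the finite, section-closed sets of sections of the components of a chosen finite generating set of $S\times T$, so that every state carries both coordinates and updates only the one relevant to the letter being read. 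What your version buys is that every element of $S(\mathcal{C})$ is automatically a product of genuine pairs, hence equals $\Phi(\langle U\times V\rangle)=\Phi(S\times T)$ with no further bookkeeping; in the union-of-states construction one must still account for products drawn from $X'$ alone or $Y'$ alone, which act as $(s,\mathrm{id})$ or $(\mathrm{id},t)$ and do not obviously lie in the copy $\langle X'\rangle\langle Y'\rangle$ of $S\times T$ unless an additional argument is supplied, so your product-of-states formulation is in this respect the more careful one. The interleaving verification you flag as the main obstacle is the same check the paper's construction implicitly requires (that reading a letter of one alphabet leaves the section data for the other alphabet untouched), and it follows directly from the definition of the transition function together with the relation $f(wv)=f(w)f_w(v)$; the rest of your argument (well-definedness, injectivity via restriction to $\Sigma^*$ and $\Gamma^*$, surjectivity since each $u_i$ is its own section at the empty word) is sound.
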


\begin{proof}
An expanding automaton semigroup must be finitely generated, so the
forward direction is clear. Suppose that $S \times T$ is finitely
generated.  Then $S \times T$ is generated by $A \times B$ for some
finite $A \subseteq S$ and $B \subseteq T$.  Let $\mathcal{A}_S$ and
$\mathcal{A}_T$ be expanding automata with state sets $P$ and $Q$
respectively such that $S=S(\mathcal{A}_S)$ and
$T=S(\mathcal{A}_T)$.  Furthermore, choose $m,n$ so that $A
\subseteq P^m$ and $B \subseteq Q^n$, and add enough states to each
expanding automaton so that we obtain new automata $\mathcal{A}'_S$
and $\mathcal{A}'_T$ with $S=S(\mathcal{A}'_S)$,
$T=S(\mathcal{A}'_T)$, and $P^m$ is contained in the state set of
$\mathcal{A}'_S$; likewise for $Q^n$ and $\mathcal{A}'_T$. Write
$\mathcal{A}'_S=(X',C,t',o')$ and
$\mathcal{A}'_T=(Y',D,\hat{t},\hat{o}).$ with $C$ and $D$ disjoint.

Let $\mathcal{Y}=(X' \cup Y', C \cup D,t,o)$ be the expanding
automaton defined by \[t(q,\sigma)= \begin{cases} t'(q,\sigma) & q
\in X' \ \text{and} \ \sigma \in C \\ q & q \in X' \ \text{and} \
\sigma \in D \\ \hat{t}(q,\sigma) & q \in Y' \ \text{and} \ \sigma
\in D
\\ q & q\in Y' \ \text{and} \ \sigma \in C \end{cases} \ \ \ \
 \text{and} \ \ \ \ o(q,\sigma)= \begin{cases} o'(q,\sigma) & q \in X' \
\text{and} \ \sigma \in C \\ \sigma & q \in X' \ \text{and} \ \sigma
\in D \\ \hat{o}(q,\sigma) & q \in Y' \ \text{and} \ \sigma \in D
\\ \sigma & q\in Y' \ \text{and} \ \sigma \in C \end{cases}.\]
Then the subsemigroup of $S(\mathcal{Y})$ generated by $X'$ is $S$
and the subsemigroup of $S(\mathcal{Y})$ generated by $Y'$ is $T$,
and construction of $\mathcal{Y}$ implies that $x'y'=y'x'$ for all
$x' \in X'$ and $y' \in Y'$.  Thus \newline $S(\mathcal{Y}) \cong S
\times T$.\end{proof}

Let $S$ and $T$ be finitely generated semigroups such that $T$ is
infinite. Robertson, Ruskuc, and Wiegold show in \cite{RRW} that if
$S$ is finite then $S \times T$ is finitely generated if and only if
$S^2=S$.  If $S$ is infinite, then $S\times T$ is finitely generated
if and only if $S^2=S$ and $T^2=T$.  Let $\mathbb{N}$ denote the
free semigroup of rank 1.  Then $\mathbb{N}^2\neq \mathbb{N}$, and
thus $\mathbb{N} \times \mathbb{N}$ is not an expanding automaton
semigroup (even though $\mathbb{N}$ is an expanding automaton
semigroup).

\subsection{Free Partially Commutative Monoids}

In this section we show that any free partially commutative monoid
is an expanding automaton semigroup. Let $M$ be a free partially
commutative monoid with monoid presentation $<X|R>$.  We begin by
defining the \emph{shortlex normal form} on $M$.  First, if $v \in
X^*$, $|v|$ will always denote the length of $v$ in $X^*$.  Order
the set $X$ by $x_{i}<x_j$ whenever $i<j$.  If $v,w \in X^*$, let
$v<w$ if and only if $|v|<|w|$ or, if $|v|=|w|$, $v$ comes before
$w$ in the dictionary order induced by the order on $X$.  This is
called the \emph{shortlex ordering} on $X^*$.  To obtain the set of
shortlex normal forms of $M$, for each $w \in M$ choose a word $w'
\in X^*$ such that $w=w'$ in $M$ and $w'$ is minimal in $X^*$ with
respect to the shortlex ordering.  We remark that it is immediate
from this definition that a word $w \in X^*$ is in shortlex normal
form in $M$ if and only if for all factorizations $x=ybuaz$ in $M$
where $y,u,z \in X^*$, $a$ and $b$ commute, and $a<b$, there is a
letter of $u$ which does not commute with $a$.

For any $w \in X^*$, let $w(x_i,x_j)$ denote the word obtained from
$w$ by erasing all letters except $x_i$ and $x_j$.  We write
$w(x_i)$ to denote the word obtained from $w$ by deleting all
occurrences of the letter $x_i$.  We will need the following lemma
regarding free partially commutative monoids.

\begin{lem}\label{lemma regarding trace monoids}
Let $M$ be a free partially commutative monoid generated by
$X=\{x_1,...,x_n\}$, and let $v,w \in X^*$ such that $v$ and $w$ are
in shortlex normal form in $M$. Suppose that
\begin{enumerate}
\item $|v(x_i)|=|w(x_i)| \ \text{for} \ 1
\leq i \leq n$ and
\item $v(x_i,x_j)=w(x_i,x_j) \ \text{in} \ X^* \ \text{whenever} \ 1 \leq
i,j \leq n \ \text{and} \ x_i \ \text{and} \ x_j \ \text{do not
commute}$.
\end{enumerate}  Then $v=w$ in $M$.
\end{lem}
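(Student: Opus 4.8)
The plan is to prove the contrapositive structure by induction on $|v|$ (equivalently $|w|$, which must equal $|v|$ by summing condition (1) over all $i$). The base case $|v|=0$ is trivial. For the inductive step, I would compare the first letters of the shortlex normal forms $v$ and $w$. Write $v = x_a v'$ and $w = x_b w'$ where $x_a, x_b \in X$. The goal is to show $x_a = x_b$, because once the leading letters agree, I can strip them off, check that $v'$ and $w'$ still satisfy hypotheses (1) and (2) (the projection data change in a controlled way: one occurrence of $x_a$ is removed from each, and the pairwise projections involving $x_a$ each lose their leading letter), and invoke the inductive hypothesis to conclude $v' = w'$ in $M$, hence $v = w$.

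The heart of the argument is therefore showing $x_a = x_b$. Suppose for contradiction $x_a \neq x_b$; say $x_a < x_b$ without loss of generality. I want to locate where $x_a$ first appears in $w$. Since $|w(x_a)| = |v(x_a)| \geq 1$ (as $x_a$ occurs at the front of $v$), the letter $x_a$ does occur in $w$. Consider its first occurrence in $w = x_b w'$; write $w = u\, x_a\, z$ where $u$ is the prefix before that first $x_a$, so $u$ contains no $x_a$ and begins with $x_b$. I would argue that $x_a$ must commute with every letter of $u$: if $x_a$ failed to commute with some letter $x_c$ appearing in $u$, then I can read off the relative order of the $x_a$'s and $x_c$'s, and condition (2) applied to the pair $(x_a, x_c)$ would force $v(x_a, x_c)$ to place an $x_c$ before the first $x_a$ — but $v$ begins with $x_a$, so $v(x_a,x_c)$ begins with $x_a$, a contradiction. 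Hence $x_a$ commutes past all of $u$.

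But $u$ is nonempty and begins with $x_b > x_a$, so $w$ contains a factorization $w = (\text{prefix})\, x_b\, \cdots\, x_a\, z$ in which $x_a$ commutes with $x_b$ and with everything between them (the intervening letters all lie in $u$ and hence all commute with $x_a$), with $x_a < x_b$. This exactly violates the shortlex-normal-form characterization quoted just before the lemma statement: such a factorization means $w$ is not minimal, since $x_a$ could be moved leftward past $x_b$. This contradiction establishes $x_a = x_b$.

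The main obstacle I anticipate is the bookkeeping in the inductive step: after stripping the common leading letter $x_a$, I must verify carefully that hypotheses (1) and (2) are inherited by $v'$ and $w'$, and in particular that $v'$ and $w'$ remain in shortlex normal form (a suffix of a shortlex-normal word need not in general be normal, so I should instead argue directly that the reduced projection data for $v'$ and $w'$ agree and then compare their normal forms, or track normal forms of $v', w'$ separately). The cleanest route is to phrase the induction so that the normal-form hypothesis is re-established for the shorter words, using that deleting the genuinely-leftmost letter of a shortlex-minimal representative yields a shortlex-minimal representative of the remaining element; I would confirm this commutation-combinatorics point as the one genuinely delicate verification rather than treat it as routine.
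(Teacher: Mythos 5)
Your proposal is correct and follows essentially the same route as the paper: compare the leading letters of $v$ and $w$, use condition (2) to show that the smaller letter commutes with everything preceding its first occurrence in the other word, contradict shortlex minimality via the characterization stated before the lemma, and then continue inductively. The only difference is organizational --- you run an explicit induction on length and verify that the hypotheses (and shortlex normality) pass to the suffixes after stripping the common first letter, a point the paper compresses into ``inductively continuing the argument''; your version is, if anything, slightly more careful there.
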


\begin{proof}
Let $v,w \in M$ be words satisfying $|v(x_i)|=|w(x_i)|$ for all $i$.
This implies that the number of occurrences of $x_i$ as a letter of
$v$ equals the number of occurrences of $x_i$ as a letter of $w$.
In particular, $|v|=|w|$. Write $v=x_{i_1}...x_{i_k}$ and
$w=x_{j_1}...x_{j_k}$ with $v,w$ in shortlex normal form. Suppose
that $x_{i_1}<x_{j_1}$. Then $v(x_{i_1},x_{j_1}) \neq
w(x_{i_1},x_{j_1})$ in $X^*$, and condition (2) in the hypotheses
implies that $x_{i_1}$ and $x_{j_1}$ commute. Condition (1) implies
that $x_{j_1}$ is a letter of $v$ and $x_{i_1}$ is a letter of $w$,
and so we write $v=x_{i_1}v_1x_{j_1}v_2$ where $v_1$ does not
contain $x_{j_1}$ as a letter.  Similarly, write
$w=x_{j_1}w_1x_{i_1}w_2$. Condition (2) implies that $x_{i_1}$
commutes with every letter of $w_1$.  Since $x_{i_1}<x_{j_1}$, we
have that $w$ was not in lexicographic normal form. Thus $x_{i_1}
\not < x_{j_1}$, and symmetry implies $x_{j_1} \not < x_{i_1}$.  So
$x_{i_1}=x_{j_1}$. Inductively continuing the argument implies that
$x_{i_t}=x_{j_t} \ \forall \ 1 \leq t \leq k$.\end{proof}

\begin{thm}\label{trace monoids aut sgps}
Every free partially commutative monoid is an automaton semigroup.
\end{thm}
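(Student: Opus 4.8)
The plan is to construct, for a given free partially commutative monoid $M$ generated by $X=\{x_1,\dots,x_n\}$ with commuting relation set $R$, an explicit synchronous automaton whose semigroup is isomorphic to $M$. The alphabet should encode enough information so that the action of a generator on a word records both the letter-count of each $x_i$ and the relative order of non-commuting pairs, since Lemma~\ref{lemma regarding trace monoids} tells us that these two pieces of data completely determine an element of $M$. My first move is therefore to choose a working alphabet $\Sigma$ large enough to carry these statistics; a natural choice is to let $\Sigma$ be built from $X$ itself (or a small expansion of it) so that reading a word over $\Sigma$ lets the automaton ``watch'' the pattern of occurrences of the generators.

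The key idea is to realize each generator $x_k$ of $M$ as a state $q_k$ of the automaton whose action essentially appends or inserts a copy of $x_k$ into a word in a way that respects the commuting relations. Concretely, I would design $q_k$ so that, on reading a word $w$, the output $q_k(w)$ records the shortlex normal form of $x_k w$ (or a faithful proxy for it). The crucial verification is then twofold. First I would check that the relations of $M$ hold in the automaton semigroup: whenever $x_i$ and $x_j$ commute in $M$, the states must satisfy $q_i q_j = q_j q_i$, and no unwanted relations should be introduced. Second, and this is where Lemma~\ref{lemma regarding trace monoids} does the heavy lifting, I would show that distinct elements of $M$ act differently, so that the map $M \to S(\mathcal{A})$ sending $x_k \mapsto q_k$ is injective. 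The strategy for injectivity is to exhibit, for any two words $v,w$ in shortlex normal form with $v \neq w$ in $M$, an input over $\Sigma$ on which the corresponding products of states disagree; by the lemma, $v \neq w$ forces either $|v(x_i)| \neq |w(x_i)|$ for some $i$ or $v(x_i,x_j) \neq w(x_i,x_j)$ for some non-commuting pair $x_i,x_j$, and the automaton should be engineered so that a suitable test word detects exactly these discrepancies.

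The main obstacle I anticipate is arranging the output and transition functions so that the action is simultaneously \emph{faithful} (distinct trace-monoid elements act distinctly) and \emph{relation-respecting} (commuting generators genuinely commute as functions, with no spurious collapse). These two demands pull in opposite directions: a faithful action wants to remember the full history of a word, while the commuting relations require that reordering commuting generators leave the action invariant. The resolution is to have each state $q_k$ act \emph{locally and independently on the different letters} of a word---tracking occurrences of $x_k$ and its non-commuting partners while being insensitive to the relative positions of commuting letters---so that the composite action of a product $q_{i_1}\cdots q_{i_m}$ depends only on the residue class of $x_{i_1}\cdots x_{i_m}$ in $M$. I would verify the synchronous (length-preserving) and finite-state conditions last, confirming that each $q_k$ sends letters to letters and has only finitely many distinct sections, so that $\mathcal{A}$ is a genuine finite synchronous automaton and $S(\mathcal{A}) \cong M$ as claimed.
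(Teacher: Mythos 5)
Your overall architecture is the right one, and it matches the paper's strategy at the top level: use Lemma~\ref{lemma regarding trace monoids} to reduce faithfulness to (a) detecting the number of occurrences of each generator and (b) detecting the projection onto each non-commuting pair, and arrange for commuting generators to have disjoint supports so that the defining relations hold. However, the proposal is missing the actual content of the construction, and the one concrete mechanism you do propose would fail. You suggest that the state $q_k$ should act by ``appending or inserting a copy of $x_k$'' so that $q_k(w)$ records the shortlex normal form of $x_kw$. A synchronous automaton must send each letter to a single letter, so its states act in a strictly length-preserving way; an action that appends a letter, or that outputs a normal form of a longer word, cannot be realized by a synchronous automaton at all. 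Deferring the check of the synchronous condition to the end does not repair this, because the faithfulness argument you sketch depends on that non-length-preserving behavior.

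The two ideas you would need, and which the paper supplies, are these. To detect the multiplicity of $x_i$ in a length-preserving way, the paper gives each generator $y_i$ a private two-letter sub-alphabet $\{a_i,b_i\}$ on which $y_i$ acts as the adding machine (binary odometer); this action is length-preserving, the other generators act as the identity there, and $y_i$ generates a free monoid of rank $1$ on $\{a_i,b_i\}^*$, so the count $|v(x_i)|$ is recoverable from the action. To detect the interleaving pattern of a non-commuting pair $\{x_i,x_j\}$, the paper gives that pair a private sub-alphabet $\{c_{ij},d_{ij}\}$ on which $y_i,y_j$ act as the two states of the lamplighter automaton, which are known to generate a free semigroup of rank $2$; hence the word $v(x_i,x_j)$ itself is recoverable. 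Without exhibiting finite-state, length-preserving actions with these two freeness properties (or some substitute for them), the claim that ``a suitable test word detects exactly these discrepancies'' is unsupported, and this is precisely where the difficulty of the theorem lies.
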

\begin{proof}
Let $M$ be a partially commutative monoid generated by
$X=\{x_1,...,x_n\}$.  Let $N=\{\{i,j\} \ | \ x_i \ \text{and} \ x_j
\ \text{do not commute} \}$.  Let $A=\{a_1,...,a_n\}$,
$B=\{b_1,...,b_n\}$, $C=\{c_{ij} \ | \ i<j \ \text {and} \ \{i,j\}
\in N \}$, and $D=\{d_{ij} \ | \ i<j \ \text{and} \ \{i,j\} \in N
\}$ be four alphabets where $C,D$ are in bijective correspondence
with $N$. We construct an automaton $\mathcal{A}_M$ with state set
$Q:=\{y_1,...,y_n,1\}$ over the alphabet $\Sigma = A \cup B \cup C
\cup D$ such that $S(\mathcal{A}_M) \cong M$ as follows.  Let $1$ be
the sink state that pointwise fixes $\Sigma^*$.  For each $i$,
define
\[ t(y_i,a_j)= 1 \ \text{for all} \ j, \ \ t(y_i,b_j)= \begin{cases} y_i & i=j \\ 1 & i \neq j \end{cases}
\] and
\[o(y_i,a_j)= \begin{cases} b_j & i=j \\ a_j & i \neq j \end{cases},
\ \ o(y_i,b_j)= \begin{cases} a_j & i=j \\ b_j & i \neq j
\end{cases}.  \]

By construction, the subautomaton consisting of the states $y_i$ and
$1$ over the alphabet $\{a_i,b_i\}$ is the \emph{adding machine}
automaton (see Figure 1.3 of \cite{N}) for all $i$.  Note that for
any $k>j$, $y_i^j(a_i^{2j})\neq y_i^k(a^{2j})$, and so the semigroup
corresponding to this subautomaton is the free monoid of rank 1 for
all $i$.  Thus each $y_i$ acts non-periodically on $\{a_i,b_i\}^*$
for all $i$. Furthermore, if $i \neq j$ then $y_j$ induces the
identity function from $x\Sigma^*$ to $x\Sigma^*$ where $x \in
\{a_i,b_i\}$.

We now complete the construction of $\mathcal{A}$.  Fix $i<j$ with
$\{i,j\} \in N$, and let $k \in \mathbb{N}$ such $1\ \leq k \leq n$
and $k \neq i,j$.  Define

\[t(y_i,c_{ij})=y_j,  \ \ t(y_i,d_{ij})=y_i, \ \ t(y_j,c_{ij})=y_i, \  \ t(y_j,d_{ij})=y_j\]
\[o(y_i,c_{ij})=d_{ij},  \ \ o(y_i,d_{ij})=c_{ij}, \ \ o(y_j,c_{ij})=c_{ij},  \ \ o(y_j,d_{ij})=d_{ij}\]
\[t(y_k,c_{ij})=t(y_k, d_{ij})=1  \]
\[o(y_k,c_{ij})=c_{ij}, \ \ t(y_k, d_{ij})=d_{ij}.\]  For all other $i',j'$ such that
$\{i',j'\} \subseteq N$ and $i'<j'$, define the output and
transition function analogously.  Figure \ref{FPCM example} gives
the automaton $\mathcal{A}_M$ where $M$ is the free partially
commutative monoid
\newline $<y_1,y_2,y_3 \ | \ y_1y_2=y_2y_1, \ y_1y_3=y_3y_1>$ (we
omit the arrow on the sink state).

\begin{figure}\label{FPCM example}
\centering\includegraphics[width=85mm]{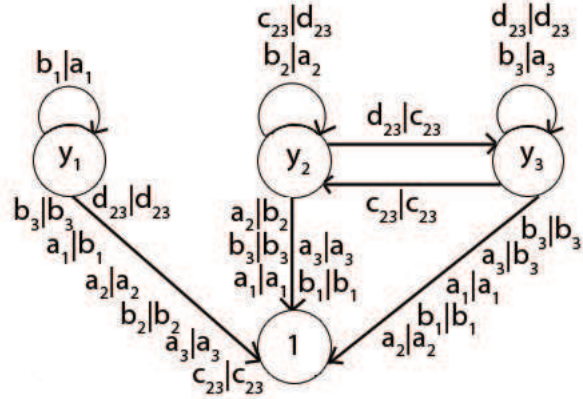}
\caption{An automaton generating the monoid $<y_1,y_2,y_3 \ | \
y_1y_2=y_2y_1, \ y_1y_3=y_3y_1>$}
\end{figure}

For each $\{i,j\} \in N$, the subautomaton of $\mathcal{A}_M$
corresponding to the states $y_i$ and $y_j$ over the alphabet
$\{c_{ij},d_{ij}\}$ is the ``lamplighter automaton" (see Figure 1.1
of \cite{GZ}).  Grigorchuk and Zuk show in Theorem 2 of \cite{GZ}
that this automaton generates the lamplighter group, and in
particular in Lemma 6 of \cite{GZ} they show that the states of this
automaton generate a free semigroup of rank 2.  Thus $y_i$ and $y_j$
generate a free semigroup of rank 2 when acting on
$\{c_{ij},d_{ij}\}^*$, and hence the semigroup generated by $y_i$
and $y_j$ in $S(\mathcal{A}_M)$ is free of rank 2.

Let $1\leq i,j \leq n$ be such that $\{i,j\} \not\subseteq N$.  By
construction of $\mathcal{A}_M$, $y_i$ and $y_j$ have disjoint
support, i.e. the sets $\{w \in \Sigma^* \ | \ y_i(w) \neq w\}$ and
$\{w \in \Sigma^* \ | \ y_j(w) \neq w\}$ are disjoint. Thus if $x_i$
and $x_j$ commute in $M$, then $y_i$ and $y_j$ commute in
$S(\mathcal{A}_M)$.   So $S(\mathcal{A}_M)$ is a quotient of $M$.

Let $v,w \in Q^*$ such that $v$ and $w$ are written in shortlex
normal form when considered as elements of $M$.  Suppose that
$w(y_i) \neq v(y_i)$ for some $i$. By construction of
$\mathcal{A}_M$, for any $i \neq j$ we have $y_j$ acts as the
identity function on $\{a_i,b_i\}^*$. Thus the action of $v$ and $w$
on $\{a_i,b_i\}^*$ is the same as the action of $v(y_i)$ and
$w(y_i)$, respectively, on $\{a_i,b_i\}^*$. So $w(y_i) \neq v(y_i)$
implies that $v \neq w$ in $S(\mathcal{A}_M)$. Hence $v=w$ in
$S(\mathcal{A}_M)$ implies that $w(y_i)=v(y_i)$ for all $i$.

Suppose now that there exist $\{r,s\} \in N$ such that $v(y_r,y_s)
\neq w(y_r,y_s)$.  If $t \neq r,s$, then $y_t$ acts like the
identity function on $\{c_{rs},d_{rs}\}^*$. Thus the action of $v$
and $w$ on $\{c_{rs},d_{rs}\}^*$ is the same as the action of
$v(y_r,y_s)$ and $w(y_r,y_s)$, respectively, on
$\{c_{rs},d_{rs}\}^*$. So $v(y_r,y_s) \neq w(y_r,y_s)$ implies that
$v\neq w$ in $S(\mathcal{A_M})$.  Thus if $v=w$ in
$S(\mathcal{A_M})$ then $v(y_r,y_s) = w(y_r,y_s)$ in $Q^*$ for all
$\{r,s\} \in N$.

The last two paragraphs have shown that if $v=w$ in
$S(\mathcal{A_M})$, then $v$ and $w$ satisfy the hypotheses of Lemma
\ref{lemma regarding trace monoids}.  Hence $v=w$ in $M$, and the
result follows.\end{proof}

\section*{Acknowledgements}
The author would like to thank his advisors Susan Hermiller and John
Meakin for their direction and guidance in the editing of this
paper.

\end{document}